\theoremstyle{plain}
\newtheorem{theorem}{Theorem}[section]
\newtheorem{proposition}[theorem]{Proposition}
\newtheorem{lemma}[theorem]{Lemma}
\newtheorem{corollary}[theorem]{Corollary}
\newtheorem{main theorem}[theorem]{Main Theorem}
\newtheorem{question}{Question}
\newtheorem{remark}{Remark}
\newtheorem{definition}{Definition}
\newtheorem{convention}{Convention}
\newtheorem{example}{Example}
\newlength\savewidth
\newcommand{\interior}{\operatorname{int}}
\newcommand{\ZZ}{\mathbb{Z}}
\newcommand{\QQ}{\mathbb{Q}}
\newcommand{\RR}{\mathbb{R}}
\newcommand{\HH}{\mathbb{H}}
\newcommand{\QQQ}{\hat{\mathbb{Q}}}
\newcommand{\RRR}{\hat{\mathbb{R}}}
\newcommand{\Conway}{\mbox{\boldmath$S$}^{2}}
\newcommand{\Conways}
{(\mbox{\boldmath$S$}^{2},\mbox{\boldmath$P$})}
\newcommand{\PP}{\mbox{\boldmath$P$}}
\newcommand{\PConway}{\mbox{\boldmath$S$}}
\newcommand{\rtangle}[1]{(B^3,t({#1}))}
\newcommand{\DD}{\mathcal{D}}
\newcommand{\RGPP}[1]{\hat\Gamma_{#1}}
\newcommand{\RGP}[1]{\Gamma_{#1}}
\newcommand{\llangle}{\langle\langle}
\newcommand{\rrangle}{\rangle\rangle}
\newcommand{\lp}{(\hskip -0.07cm (}
\newcommand{\rp}{)\hskip -0.07cm )}
\begin{document}

\title[Epimorphisms between 2-bridge link groups:
Homotopically trivial simple loops on 2-bridge spheres]
{Epimorphisms between 2-bridge link groups:
Homotopically trivial simple loops on 2-bridge spheres}
\author{Donghi Lee}
\address{Department of Mathematics\\
Pusan National University \\
San-30 Jangjeon-Dong, Geumjung-Gu, Pusan, 609-735, Korea}
\email{donghi@pusan.ac.kr}

\author{Makoto Sakuma}
\address{Department of Mathematics\\
Graduate School of Science\\
Hiroshima University\\
Higashi-Hiroshima, 739-8526, Japan}
\email{sakuma@math.sci.hiroshima-u.ac.jp}

\subjclass[2010]{Primary 57M25, 20F06 \\
\indent {The first author was supported by Basic Science Research Program
through the National Research Foundation of Korea(NRF) funded
by the Ministry of Education, Science and Technology(2009-0065798).
The second author was supported
by JSPS Grants-in-Aid 18340018 and 21654011,
and was partially supported by JSPS Core-to-Core Program 18005.}}


\begin{abstract} We give a complete characterization of those
essential simple loops on $2$-bridge spheres of $2$-bridge links
which are null-homotopic in the link complements.
By using this result,
we describe all upper-meridian-pair-preserving epimorphisms between 2-bridge link groups.
\end{abstract}
\maketitle

\tableofcontents

\section{Introduction}

For a knot or a link, $K$, in $S^3$, the fundamental group $\pi_1(S^3-K)$ of the complement is called
the {\it knot group} or the {\it link group} of $K$,
and is denoted by $G(K)$.
For prime knots, the knot groups are complete invariants
for the knot types
(see \cite{Gordon-Luecke}).
Moreover we have a partial order on the set of prime knots,
by setting $\tilde K\ge K$ if there is an
epimorphism $G(\tilde{K}) \to G(K)$
(see, for example, \cite[Proposition ~3.2]{Silver-Whitten}).
Epimorphisms among link groups have received
considerable attention
and they have been studied in various places in the literature
(see
\cite{Boileau-Boyer-Reid-Wang, Boileau-Rubinstein-Wang,
Gonzalez-Ramirez1, Gonzalez-Ramirez2,
HKMS, Hoste-Shanahan,
Kitano-Suzuki, Kitano-Suzuki2, Kitano-Suzuki-Wada,
Ohtsuki-Riley-Sakuma, Reid-Wang-Zhou,
Sakuma2, Silver-Whitten, Simon, Soma1, Soma2, Wang}
and references therein).

In \cite{Ohtsuki-Riley-Sakuma},
a systematic construction of epimorphisms
between 2-bridge link groups was given.
The construction is based on a systematic construction of
essential simple loops on $2$-bridge spheres of $2$-bridge links
which are null-homotopic in the link complements.
Thus the following question naturally arises
(see \cite[Question ~9.1(2)]{Ohtsuki-Riley-Sakuma}).

\begin{question} \label{question}
Let $K$ be a 2-bridge link, and let $S$ be
a $4$-times punctured sphere in $S^3-K$
determined by a $2$-bridge sphere.
Then which essential simple loops on $S$ are null-homotopic in
$S^3-K$?
\end{question}

It should be noted that each $2$-bridge link admits a unique
$2$-bridge sphere up to isotopy (see \cite{Schubert}),
and hence the $4$-times punctured sphere $S$ in the above problem
is unique up to isotopy.

In this paper, we give a complete answer to the above question
(Main Theorem ~\ref{main_theorem}).
In fact, we show that those essential simple loops on $S$
constructed in \cite[Corollary ~4.7]{Ohtsuki-Riley-Sakuma}
are the only essential simple loops on $S$
which are null-homotopic in the $2$-bridge link complement.
This enables us to describe
all epimorphisms between $2$-bridge link groups
which map the upper meridian pair of the source group
to the upper meridian pair of the target group
(Main Theorem ~\ref{main_theorem_2}).
In fact, this theorem says that any such epimorphism is
equivalent to that constructed in
\cite[Theorem ~1.1]{Ohtsuki-Riley-Sakuma}.

To the authors' knowledge,
every known pair of $2$-bridge knots $(\tilde K, K)$ with $\tilde K\ge K$
belongs to the list in \cite[Theorem ~1.1]{Ohtsuki-Riley-Sakuma}.
Kitano and Suzuki and their coworkers
verified this for $2$-bridge knots up to 11-crossings
in \cite{HKMS, Kitano-Suzuki, Kitano-Suzuki2}.
Gonzal\'ez-Ac\~una and Ram\'irez \cite{Gonzalez-Ramirez2}
determined the $2$-bridge knots whose knot groups have
epimorphisms to the $(2,p)$ torus knot group,
and their result implies that every such $2$-bridge knot group
is isomorphic to one
constructed in \cite[Theorem ~1.1]{Ohtsuki-Riley-Sakuma}.
In their recent work \cite{Boileau-Boyer-Reid-Wang},
Boileau, Boyer, Reid and Wang proved Simon's conjecture
(see \cite[Problem ~1.12(D)]{Kirby}) for $2$-bridge knot groups,
namely they have shown that each $2$-bridge knot group surjects onto
only finitely many distinct knot groups.
To be more precise, they have shown that
if a $2$-bridge knot group $G(K)$ surjects onto a non-trivial
knot group $G(K')$, then $K'$ is a $2$-bridge knot
and the epimorphism is induced by a map between the knot complements
of non-zero degree.
The last condition is
satisfied for all epimorphisms
in \cite[Theorem ~1.1]{Ohtsuki-Riley-Sakuma}.
In fact, they are induced by a
very nice map $(S^3,K)\to (S^3,K')$,
called a branched-fold map \cite[Theorem ~1.2]{Ohtsuki-Riley-Sakuma}.
Thus it would be natural to expect that
any epimorphism between $2$-bridge knot groups
is equivalent to one in \cite[Theorem ~1.1]{Ohtsuki-Riley-Sakuma}.
In fact, some evidence for this conjecture
was provided recently by Hoste and Shanahan ~\cite{Hoste-Shanahan}.

Question ~\ref{question} can be regarded as a
special case of the more general question
that, for a given link $L$ and a bridge sphere $F$ for $L$,
which essential simple loops on $F$ are null-homotopic in $S^3-L$.
The latter question in turn can be regarded as a variation
of the question that, for a given $3$-manifold $M$ and its
Heegaard surface $F$,
which essential simple loops on $F$ are null-homotopic in $M$.
In \cite[Question ~5.4]{Gordon}, Minsky
refined this to a certain question
which generalizes Question ~\ref{question}.
Thus our result
may be regarded as
an answer to a special variation of Minsky's question
(see Section ~\ref{Further_discussion}).

The authors would like to thank
Norbert A'Campo, Hirotaka Akiyoshi,
Brian Bowditch, Danny Calegari,
Max Forester,
Koji Fujiwara,
Yair Minsky, Ser Peow Tan
and Caroline Series for stimulating conversations.
They also thank the referee for his/her careful reading
of the manuscript.

\section{Main result} \label{statements}

Consider the discrete group, $H$, of isometries
of the Euclidean plane $\RR^2$
generated by the $\pi$-rotations around
the points in the lattice $\ZZ^2$.
Set $\Conways:=(\RR^2,\ZZ^2)/H$
and call it the {\it Conway sphere}.
Then $\Conway$ is homeomorphic to the 2-sphere,
and $\PP$ consists of four points in $\Conway$.
We also call $\Conway$ the Conway sphere.
Let $\PConway:=\Conway-\PP$ be the complementary
4-times punctured sphere.
For each $r \in \QQQ:=\QQ\cup\{\infty\}$,
let $\alpha_r$ be the simple loop in $\PConway$
obtained as the projection of a line in $\RR^2-\ZZ^2$
of slope $r$.
Then $\alpha_r$ is {\it essential} in $\PConway$,
i.e., it does not bound a disk in $\PConway$
and is not homotopic to a loop around a puncture.
Conversely, any essential simple loop in $\PConway$
is isotopic to $\alpha_r$ for
a unique $r\in\QQQ$.
Then $r$ is called the {\it slope} of the simple loop.
Similarly, any simple arc $\delta$
in $\Conway$ joining two
different points in $\PP$ such that
$\delta\cap \PP=\partial\delta$
is isotopic to the image of a line in $\RR^2$
of some slope $r\in\QQQ$
which intersects $\ZZ^2$.
We call $r$ the {\it slope} of $\delta$.

A {\it trivial tangle} is a pair $(B^3,t)$,
where $B^3$ is a 3-ball and $t$ is a union of two
arcs properly embedded in $B^3$
which is parallel to a union of two
mutually disjoint arcs in $\partial B^3$.
Let $\tau$ be the simple unknotted arc in $B^3$
joining  the two components of $t$
as illustrated in Figure ~\ref{fig.trivial-tangle}.
We call it the {\it core tunnel} of the trivial tangle.
Pick a base point $x_0$ in $\interior \tau$,
and let $(\mu_1,\mu_2)$ be the generating pair
of the fundamental group $\pi_1(B^3-t,x_0)$
each of which is represented by
a based loop consisting of
a small peripheral simple loop
around a component of $t$ and
a subarc of $\tau$ joining the circle to $x_0$.
For any base point $x\in B^3-t$,
the generating pair of $\pi_1(B^3-t,x)$
corresponding to the generating pair $(\mu_1,\mu_2)$
of $\pi_1(B^3-t,x_0)$ via a path joining $x$ to $x_0$
is denoted by the same symbol.
The pair $(\mu_1,\mu_2)$ is unique up to
(i) reversal of the order,
(ii) replacement of one of the members with its inverse,
and (iii) simultaneous conjugation.
We call the equivalence class of $(\mu_1,\mu_2)$
the {\it meridian pair} of the fundamental
group $\pi_1(B^3-t)$.

\begin{figure}[h]
\begin{center}
\includegraphics{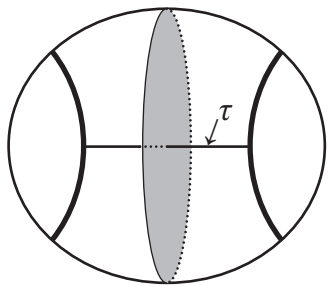}
\end{center}
\caption{\label{fig.trivial-tangle}
A trivial tangle}
\end{figure}

By a {\it rational tangle},
we mean a trivial tangle $(B^3,t)$
which is endowed with a homeomorphism from
$\partial(B^3,t)$ to $\Conways$.
Through the homeomorphism we identify
the boundary of a rational tangle with the Conway sphere.
Thus the slope of an essential simple loop in
$\partial B^3-t$ is defined.
We define
the {\it slope} of a rational tangle
to be the slope of
an essential loop on $\partial B^3 -t$ which bounds a disk in $B^3$
separating the components of $t$.
(Such a loop is unique up to isotopy
on $\partial B^3 -t$ and is called a {\it meridian}
of the rational tangle.)
We denote a rational tangle of slope $r$ by
$\rtangle{r}$.
By van Kampen's theorem, the fundamental group
$\pi_1(B^3-t(r))$ is identified
with the quotient
$\pi_1(\PConway)/\llangle\alpha_r \rrangle$,
where $\llangle\alpha_r \rrangle$ denotes the normal
closure.

For each $r\in \QQQ$,
the {\it 2-bridge link $K(r)$ of slope $r$}
is defined to be the sum of the rational tangles of slopes
$\infty$ and $r$, namely,
$(S^3,K(r))$ is
obtained from $\rtangle{\infty}$ and
$\rtangle{r}$
by identifying their boundaries through the
identity map on the Conway sphere
$\Conways$. (Recall that the boundaries of
rational tangles are identified with the Conway sphere.)
$K(r)$ has one or two components according as
the denominator of $r$ is odd or even.
We call $\rtangle{\infty}$
and $\rtangle{r}$, respectively,
the {\it upper tangle} and {\it lower tangle}
of the 2-bridge link.
The 2-bridge links are classified by the following theorem
of Schubert \cite{Schubert}
(cf. \cite{Burde-Zieschang, Kawauchi}).

\begin{theorem}[Schubert]
\label{Thm:Schubert}
Two 2-bridge links $K(q/p)$ and $K(q'/p')$
are equivalent
(i.e., there is a homeomorphism from $S^3$ to itself
sending $K(q/p)$ to $K(q'/p')$),
if and only if the following conditions hold.
\begin{enumerate}[\indent \rm (1)]
\item
$p=p'$.
\item
Either $q\equiv \pm q'\pmod p$
or $qq'\equiv \pm 1\pmod p$.
\end{enumerate}
\end{theorem}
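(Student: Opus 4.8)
The plan is to establish the two implications separately: for sufficiency I would realize each arithmetic equivalence by an explicit ambient homeomorphism built from the symmetries of the tangle decomposition, while for necessity I would extract the conditions from a homeomorphism invariant of the link, namely its double branched cover.

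For the ``if'' direction, recall that $K(q/p)$ is the union of $\rtangle{\infty}$ and $\rtangle{q/p}$ along the Conway sphere, and that an orientation-preserving homeomorphism of $\PConway$ acts on the set of slopes $\QQQ$ by a linear-fractional transformation in $\PSL(2,\ZZ)$, with reflections supplying the remaining orientation-reversing maps such as $r\mapsto -r$. A homeomorphism of $\Conways$ extends over $\rtangle{s}$ preserving it precisely when it fixes the slope $s$. Using this, $q\equiv q'\pmod p$ is realized by an integer translation $r\mapsto r+n$, which fixes $\infty$ and carries the slope $q/p$ to $q'/p$; similarly $q\equiv -q'\pmod p$ is realized by the reflection $r\mapsto -r$ followed by a translation, again fixing the upper tangle $\rtangle{\infty}$. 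The remaining condition $qq'\equiv\pm1\pmod p$ is different in character: it does not fix the slope $\infty$ and must be realized by the symmetry of $S^3$ that turns the two-bridge presentation upside down, interchanging the upper and lower tangles; I would verify that this tangle-exchanging homeomorphism, composed with the previous two families, carries $K(q/p)$ to $K(q'/p')$ exactly when $q'\equiv\pm q^{-1}\pmod p$.

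For the ``only if'' direction, I would pass to the double cover $M(q/p)$ of $S^3$ branched over $K(q/p)$. Lifting the decomposition $\rtangle{\infty}\cup\rtangle{q/p}$, each rational tangle lifts to a solid torus, and their union is a genus-one Heegaard splitting whose gluing is prescribed by the slopes; this identifies $M(q/p)$ with the lens space $L(p,q)$. Since an equivalence $(S^3,K(q/p))\to(S^3,K(q'/p'))$ lifts to a homeomorphism of branched covers, we obtain $L(p,q)\cong L(p',q')$, and the classical homeomorphism classification of lens spaces then forces $p=p'$ together with $q'\equiv\pm q^{\pm1}\pmod p$. Finally I would translate these four congruences into Schubert's two alternatives, noting that $q'\equiv\pm q\pmod p$ is the first and that $q'\equiv\pm q^{-1}\pmod p$ is equivalent to $qq'\equiv\pm1\pmod p$.

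The main obstacle is the necessity direction, and concretely the classification of lens spaces on which it depends. Because the homotopy type of $L(p,q)$ remembers $q$ only up to squares modulo $p$, distinguishing the links requires the finer simple-homotopy (Reidemeister torsion) invariant, which is the genuine depth of the argument. An alternative, more intrinsically three-dimensional route bypasses the branched cover and instead uses the uniqueness up to isotopy of the $2$-bridge sphere: after isotoping the equivalence to match the two $2$-bridge spheres, it restricts to a homeomorphism of Conway spheres sending meridian to meridian, and the induced slope map yields the arithmetic directly. In that approach I expect the delicate step to be the uniqueness of the $2$-bridge sphere itself, which rests on understanding the incompressible surfaces in a $2$-bridge link complement.
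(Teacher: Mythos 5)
The paper does not actually prove this statement: Theorem~\ref{Thm:Schubert} is quoted as Schubert's classical classification with the proof delegated to \cite{Schubert} (cf.\ \cite{Burde-Zieschang, Kawauchi}), so there is no internal argument to compare yours against. Your sketch is the standard proof found in those references: sufficiency by exhibiting homeomorphisms built from tangle symmetries (translations and reflections fixing the slope $\infty$ for $q'\equiv\pm q\pmod p$, and the bridge-exchanging symmetry for $qq'\equiv\pm1\pmod p$), and necessity by passing to the double branched cover, identifying it with the lens space $L(p,q)$ via the genus-one Heegaard splitting lifted from the two rational tangles, and invoking the Reidemeister classification of lens spaces. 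The outline is sound, and you correctly locate the genuine depth in the lens-space classification (which needs Reidemeister torsion, since homotopy type only sees $q$ up to sign and squares mod $p$); the other point that deserves explicit verification, and that you flag, is the computation showing the tangle-exchanging homeomorphism realizes precisely $q'\equiv\pm q^{-1}\pmod p$. With those two classical ingredients accepted as black boxes, your proposal is correct and is essentially the proof the paper's citations supply.
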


Let $\DD$ be the
{\it Farey tessellation},
that is,
the tessellation of the upper half
space $\HH^2$ by ideal triangles which are obtained
from the ideal triangle with the ideal vertices $0, 1,
\infty \in \QQQ$ by repeated reflection in the edges.
Then $\QQQ$ is identified with the set of the ideal vertices of $\DD$.
For each $r\in \QQQ$,
let $\RGP{r}$ be the group of automorphisms of
$\DD$ generated by reflections in the edges of $\DD$
with an endpoint $r$.
It should be noted that $\RGP{r}$
is isomorphic to the infinite dihedral group
and the region bounded by two adjacent edges of $\DD$
with an endpoint $r$ is a fundamental domain
for the action of $\RGP{r}$ on $\HH^2$,
by virtue of Poincare's fundamental polyhedron theorem
(see, for example, \cite{Ratcliffe}).
Let $\RGPP{r}$ be the group generated by $\RGP{r}$ and $\RGP{\infty}$.
When $r\in \QQ - \ZZ$,
$\RGPP{r}$ is equal to the free product $\RGP{r}*\RGP{\infty}$,
having a fundamental domain shown in Figure ~\ref{fig.fd}.
Otherwise, $\RGPP{r}$ is the group generated by
reflections in
the edges of $\DD$ or $\RGP{\infty}$ according as $r\in\ZZ$ or $r=\infty$.
It should be noted that Theorem ~\ref{Thm:Schubert}
says that two 2-bridge links $K(r)$ and $K(r')$ are equivalent
if and only if there is an automorphism of $\DD$
which sends $\{\infty,r\}$ to $\{\infty,r'\}$.
Thus the conjugacy class of the group $\RGPP{r}$
in the automorphism group of $\DD$ is uniquely determined by
the link $K(r)$.

\begin{figure}[h]
\begin{center}
\includegraphics{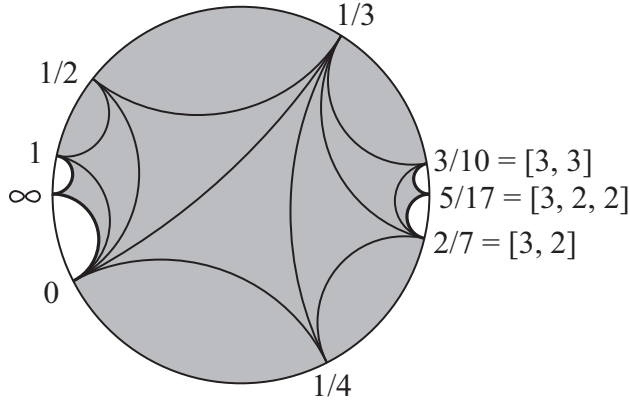}
\end{center}
\caption{\label{fig.fd}
A fundamental domain of $\hat\Gamma_r$ in the
Farey tessellation (the shaded domain) for $r=5/17=${\scriptsize $\cfrac{1}{3+\cfrac{1}{2+\cfrac{1}{2}}}$}\,$=:[3,2,2]$.}
\end{figure}

We recall the following fact
(\cite[Proposition ~4.6 and Corollary ~4.7]{Ohtsuki-Riley-Sakuma})
which describes
the role of $\RGPP{r}$ in the study of
$2$-bridge link groups.

\begin{proposition}\label{Prop:Knotgroup}
For every 2-bridge link $K(r)$, the following holds.
If two elements $s$ and $s'$ of $\QQQ$
lie in the same $\RGPP{r}$-orbit,
then $\alpha_s$ and $\alpha_{s'}$ are homotopic in
$S^3-K(r)$.
In particular, if $s$ belongs to the orbit of $\infty$ or $r$
by $\RGPP{r}$,
then $\alpha_s$ is null-homotopic in $S^3-K(r)$.
\end{proposition}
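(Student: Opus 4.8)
The plan is to turn the statement into a purely algebraic assertion about the link group, reduce it to the action of individual reflections, and realize each reflection by a homeomorphism of one of the two tangles. First I would apply van Kampen's theorem to the tangle decomposition $S^3-K(r)=\rtangle{\infty}\cup\rtangle{r}$ along $\PConway$. Using the identifications $\pi_1(\rtangle{\rho})\cong\pi_1(\PConway)/\llangle\alpha_\rho\rrangle$ recorded above, this yields $\pi_1(S^3-K(r))\cong\pi_1(\PConway)/\llangle\alpha_\infty,\alpha_r\rrangle$. Two free loops are freely homotopic in $S^3-K(r)$ exactly when their classes are conjugate in this quotient, so the statement becomes: whenever $s'=\gamma(s)$ for some $\gamma\in\RGPP{r}$, the classes $[\alpha_s]$ and $[\alpha_{s'}]$ are conjugate modulo $\llangle\alpha_\infty,\alpha_r\rrangle$, and $[\alpha_s]$ is trivial when $s$ lies in the orbit of $\infty$ or of $r$.

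Next I would reduce to a single generator. Since $\RGPP{r}$ is generated by the reflections in the edges of $\DD$ incident to $\infty$ (which generate $\RGP{\infty}$) together with those incident to $r$ (which generate $\RGP{r}$), and any two vertices in one orbit are joined by a finite word in these reflections, it suffices to treat $s'=\sigma(s)$ for one reflection $\sigma$ in an edge incident to $\infty$. The case of an edge incident to $r$ is identical after interchanging the two tangles, the presentation of $\pi_1(S^3-K(r))$ being symmetric in $\alpha_\infty$ and $\alpha_r$.

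A reflection $\sigma$ in an edge incident to $\infty$ fixes the vertex $\infty$, hence is realized by a self-homeomorphism $f$ of $\PConway$ preserving the slope $\infty$; because $\alpha_\infty$ bounds the meridian disk $D_\infty$ of $\rtangle{\infty}$, such an $f$ extends to a self-homeomorphism $\bar f$ of $\rtangle{\infty}$. Concretely $\RGP{\infty}$ is generated by the Dehn twist along $\alpha_\infty$ and by an involution fixing $\alpha_\infty$, and both extend over $\rtangle{\infty}$. On the boundary $f(\alpha_s)$ is isotopic to $\alpha_{\sigma(s)}$ in $\PConway$, so $[\alpha_{\sigma(s)}]=\bar f_*[\alpha_s]$ in $\pi_1(\rtangle{\infty})=\pi_1(\PConway)/\llangle\alpha_\infty\rrangle$, which is free of rank two on the meridian pair.

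It then remains to show that $[\alpha_s]$ and $\bar f_*[\alpha_s]$ are conjugate after the further quotient by $\llangle\alpha_r\rrangle$. For the Dehn-twist generator this is immediate, since the twist can be supported in a ball neighborhood of $D_\infty$ and so induces the identity already on $\pi_1(\rtangle{\infty})$; powers of it absorb the whole translation part of $\RGP{\infty}$. The main obstacle I anticipate is the non-twist reflection, where $\bar f_*$ permutes or inverts the meridian generators and is therefore an a priori \emph{outer} automorphism of the rank-two free group: here I expect to have to invoke the second relator, checking that the induced automorphism of $\pi_1(S^3-K(r))$ preserves the conjugacy class of $[\alpha_s]$ because $\bar f$ respects the meridian pair and the relation $\alpha_r$ identifies the relevant conjugacy classes (alternatively, to produce the homotopy directly by sliding the arcs of $\alpha_s$ across the disk $D_\infty$, which lies in $S^3-K(r)$). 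Once this is in place the ``in particular'' statement follows: $D_\infty$ is a disk in $S^3-K(r)$ with $\partial D_\infty=\alpha_\infty$, so $\alpha_\infty$ is null-homotopic, and the orbit statement propagates null-homotopy to every $\alpha_s$ with $s$ in the orbit of $\infty$, the orbit of $r$ being handled symmetrically via the meridian disk of $\rtangle{r}$.
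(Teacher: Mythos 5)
First, a remark on the ground truth: the present paper does not prove Proposition~\ref{Prop:Knotgroup} at all --- it is recalled verbatim from \cite[Proposition~4.6 and Corollary~4.7]{Ohtsuki-Riley-Sakuma} --- so the comparison below is with the argument in that source rather than with anything in this text. Your skeleton matches the standard one: identify free homotopy with conjugacy in $\pi_1(\PConway)/\llangle\alpha_\infty,\alpha_r\rrangle$ via van Kampen, reduce to the reflections generating $\RGP{\infty}$ and $\RGP{r}$, and realize each reflection by a homeomorphism of the corresponding trivial tangle. The Dehn-twist part is sound: the full twist along $\alpha_\infty$ realizes $s\mapsto s\pm2$, is supported in a ball neighbourhood of the meridian disk disjoint from $t(\infty)$, hence acts trivially on $\pi_1(\rtangle{\infty})$ (indeed one checks from Lemma~\ref{u-word} that $u_{s+2}\equiv u_s$ as words in $F(a,b)$). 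The ``in particular'' clause and the reduction to generators are also fine.

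The genuine gap is exactly where you flag it, and neither of your proposed repairs closes it. For a reflection $\sigma$ in a Farey edge with endpoint $\infty$, say $\sigma(s)=2n-s$, the extension $\bar f$ over $\rtangle{\infty}$ only gives $[\alpha_{\sigma(s)}]=\bar f_*[\alpha_s]$ with $\bar f_*$ an a priori outer automorphism of $F(a,b)$; what has to be proved is that the cyclic word $(u_{\sigma(s)})$ coincides with $(u_s^{\pm1})$ up to conjugacy \emph{already in} $F(a,b)=\pi_1(\rtangle{\infty})$. Your first repair --- ``invoke the second relator'' and argue in $G(K(r))$ --- looks in the wrong place: a meridian-pair-preserving automorphism of $G(K(r))$ need not fix the conjugacy class of an arbitrary element, so passing to the further quotient by $\llangle\alpha_r\rrangle$ gains nothing, and in fact nothing further is needed because the identification holds in the upper tangle group alone (e.g.\ $u_{1/2}=aba^{-1}b^{-1}$ and $u_{-1/2}=ab^{-1}a^{-1}b$ are already conjugate up to inversion in $F(a,b)$, with no use of $\alpha_r$). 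Your second repair, sliding the strands of $\alpha_s$ across $D_\infty$, is the correct geometric idea --- each crossing with $\alpha_\infty$ contributes a copy of the element $[\alpha_\infty]$, which dies in $\pi_1(\rtangle{\infty})$, and this is how $u_s$ and $u_{2n-s}$ get identified --- but it is precisely the content of the key lemma in \cite{Ohtsuki-Riley-Sakuma} and must actually be carried out, for instance by computing that the exponents of $u_{-s}$ and $u_{s}$ satisfy $\varepsilon_i(-s)=-\varepsilon_i(s)$ except at $i=1$ and $i=p+1$, and then combining this with Lemma~\ref{some_automorphisms}(1). Finally, you should state explicitly that ``homotopic'' here means homotopic as unoriented loops: the reflections reverse orientation, and in general one only obtains conjugacy of $(u_{\sigma(s)})$ with $(u_s^{-1})$.
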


Our main theorem says that the converse to the
last statement in the above proposition is valid.

\begin{main theorem}\label{main_theorem}
The loop $\alpha_s$ is null-homotopic in $S^3 - K(r)$
if and only if $s$ belongs to the $\RGPP{r}$-orbit of $\infty$ or $r$.
\end{main theorem}

This theorem may be paraphrased as follows,
with a detailed reason explained in Section ~\ref{group_presentation}.

\begin{main theorem} \label{main_theorem_2}
There is an upper-meridian-pair-preserving epimorphism
from $G(K(s))$ to $G(K(r))$
if and only if
$s$ or $s+1$
belongs to the $\RGPP{r}$-orbit of $r$ or $\infty$.
\end{main theorem}

Since the if part is \cite[Theorem ~1.1]{Ohtsuki-Riley-Sakuma},
the heart of this theorem is the only if part.

The remainder of this paper is organized as follows.
In Section ~\ref{group_presentation},
we introduce the so-called upper presentation
$G(K(r))=\langle a, b \, | \, u_r \rangle$ of a 2-bridge link group,
where $\{a, b\}$ is the upper meridian pair of $K(r)$. This upper presentation of
a 2-bridge link group will be used throughout this paper.
In Section ~\ref{sequences},
we define two sequences $S(r)$ and $T(r)$ of slope $r$
and two cyclic sequences $CS(r)$ and $CT(r)$ of slope $r$
all of which arise from the single relator $u_r$ of the presentation
$G(K(r))=\langle a, b \, | \, u_r \rangle$,
and observe several important properties of these sequences
so that we can adopt, in the succeeding sections,
small cancellation theory which is one of the geometric techniques
in combinatorial group theory.
In Section ~\ref{small_cancellation},
we show that the presentation
$G(K(r))=\langle a, b \, | \, u_r \rangle$,
where $0<r<1$,
satisfies small cancellation conditions $C(4)$ and $T(4)$.
In Section ~\ref{van_Kampen_diagrams},
by applying the Curvature Formula of Lyndon and Schupp (see \cite{lyndon_schupp})
to van Kampen diagrams over $G(K(r))=\langle a, b \, | \, u_r \rangle$,
we obtain that if $\alpha_s$ is null-homotopic in $S^3-K(r)$, where $0<r<1$,
then the cyclic word $(u_s)$ contains
some particular part of the the cyclic word $(u_r^{\pm 1})$.
In Section ~\ref{Proof_of_MainTheorem},
we prove the only if part of Main Theorem ~\ref{main_theorem}
by showing that if a rational number $s$ belongs to
a natural fundamental domain of the action of $\RGPP{r}$
on the domain of discontinuity of $\RGPP{r}$,
then $\alpha_s$ is not null-homotopic in $S^3-K(r)$.
In the final section, Section ~\ref{Further_discussion},
we describe the relation of
Main Theorem ~\ref{main_theorem} with
the question raised by Minsky in \cite[Question ~5.4]{Gordon}.

\section{Presentations of 2-bridge link groups} \label{group_presentation}

In this section, we introduce the
upper presentation of a 2-bridge link group which we shall use throughout this paper.
By van Kampen's theorem, the link group $G(K(r))=\pi_1(S^3-K(r))$ is identified with
$\pi_1(\PConway)/ \llangle\alpha_{\infty},\alpha_r\rrangle$. We call the image in the link group
of the meridian pair of the fundamental group $\pi_1(B^3-t(\infty))$ (resp. $\pi_1(B^3-t(r))$
the {\it upper meridian pair} (resp. {\it lower meridian pair}). The link group is regarded as
the quotient of the rank 2 free group, $\pi_1(B^3-t(\infty))\cong\pi_1(\PConway)/
\llangle\alpha_{\infty}\rrangle$, by the normal closure of $\alpha_{r}$. This gives a one-relator
presentation of the link group, which is
called the {\it upper presentation} (see \cite{Crowell-Fox}).

\begin{figure}[h]
\begin{center}
\includegraphics{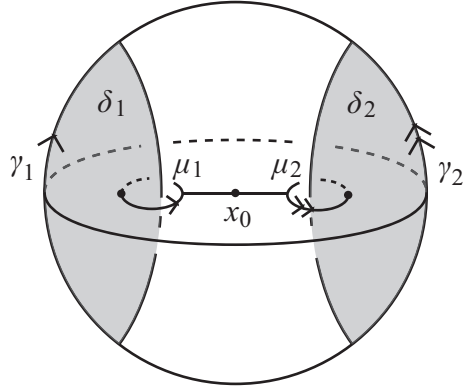}
\end{center}
\caption{
\label{fig.generator}
$\pi_1(B^3-t(\infty), x_0)=F(a,b)$,
where $a$ and $b$ are represented by $\mu_1$ and $\mu_2$, respectively.
}
\end{figure}

To find the upper presentation of $G(K(r))$ explicitly,
let $a$ and $b$, respectively, be the elements of
$\pi_1(B^3-t(\infty), x_0)$
represented by the oriented loops $\mu_1$ and $\mu_2$
based on $x_0$ as illustrated in Figure ~\ref{fig.generator}.
Then $\{a,b\}$ forms the meridian pair of
$\pi_1(B^3-t(\infty))$, which is identified with
the free group $F(a,b)$.
Note that
$\mu_i$ intersects the disk, $\delta_i$, in $B^3$
bounded by a component of $t(\infty)$ and
the essential arc, $\gamma_i$, on
$\partial(B^3,t(\infty))=\Conways$ of slope $1/0$,
in Figure ~\ref{fig.generator}.
Obtain a word $u_r$ in
$\{a, b\}$ by reading the intersection of the
(suitably oriented) loop $\alpha_r$
with $\gamma_1\cup \gamma_2$,
where a positive intersection with $\gamma_1$ (resp. $\gamma_2$)
corresponds to $a$ (resp. $b$).
Then the cyclic word $(u_r)$
represents the free homotopy class of $\alpha_r$
(see Section ~\ref{sequences} for the precise definition of a cyclic word).
It then follows that
\[
\begin{aligned}
G(K(r))&=\pi_1(S^3-K(r))\cong\pi_1(B^3-t(\infty))/\llangle \alpha_r\rrangle \\
&\cong F(a, b)/ \llangle u_r \rrangle
\cong \langle a, b \, | \, u_r \rangle.
\end{aligned}
\]
If $r \neq \infty$,
then $\alpha_r$ intersects $\gamma_1$ and $\gamma_2$ alternately,
and hence $a$ and $b$ appear in $(u_r)$ alternately.
It is known that there is a nice formula to find $u_r$ as follows
(see \cite[Proposition ~1]{Riley}).

\begin{lemma} \label{presentation}
Let $p$ and $q$ be relatively prime
positive
integers
such that $p \ge 1$.
For $1 \le i \le p-1$, let
\[\epsilon_i = (-1)^{\lfloor iq/p \rfloor},\]
where $\lfloor x \rfloor$ is the greatest integer not exceeding $x$.
\begin{enumerate}[\indent \rm (1)]
\item If $p$ is odd, then
\[u_{q/p}=a\hat{u}_{q/p}b^{(-1)^q}\hat{u}_{q/p}^{-1},\]
where
$\hat{u}_{q/p} = b^{\epsilon_1} a^{\epsilon_2} \cdots b^{\epsilon_{p-2}} a^{\epsilon_{p-1}}$.
\item If $p$ is even, then
\[u_{q/p}=a\hat{u}_{q/p}a^{-1}\hat{u}_{q/p}^{-1},\]
where
$\hat{u}_{q/p} = b^{\epsilon_1} a^{\epsilon_2} \cdots a^{\epsilon_{p-2}} b^{\epsilon_{p-1}}$.
\end{enumerate}
\end{lemma}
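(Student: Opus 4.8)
The plan is to compute the intersection word directly in the universal cover $\RR^2$ of the punctured Conway sphere, exploiting the explicit model $\PConway = (\RR^2 - \ZZ^2)/H$. The loop $\alpha_{q/p}$ lifts to a line $\ell$ of slope $q/p$ avoiding $\ZZ^2$, while the arcs $\gamma_1,\gamma_2$ of slope $1/0$ lift to the vertical lattice lines $\{x\in\ZZ\}$. First I would identify the period of $\alpha_{q/p}$: since $H$ contains the translation subgroup $2\ZZ^2$ together with the $\pi$-rotation $z\mapsto -z$, and no rotation can fix the lattice-point-avoiding line $\ell$, the shortest element of $H$ preserving $\ell$ and acting trivially on the quotient loop is translation by $(2p,2q)$. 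Reading $\alpha_{q/p}$ therefore amounts to recording the crossings of $\ell$ with the vertical lines $x=1,2,\ldots,2p$ as $x$ ranges over an interval of length $2p$, which already explains why $u_{q/p}$ has length $2p$ with $a$ and $b$ appearing alternately.

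Next I would read off each letter. At the $i$-th crossing the line meets the vertical line $x=i$; the two $H$-orbits of vertical lattice lines are distinguished by the parity of $i$, and I would fix conventions so that odd $i$ covers $\gamma_2$ (hence contributes $b^{\pm 1}$) and even $i$ covers $\gamma_1$ (hence $a^{\pm 1}$), matching the pattern $b^{\epsilon_1}a^{\epsilon_2}\cdots$. The exponent at $x=i$ is the sign of the intersection of oriented $\ell$ with the oriented image of the relevant $\gamma_j$; because the $\pi$-rotations in $H$ reverse this orientation, the sheet of the quotient flips each time $\ell$ crosses a horizontal lattice line $y\in\ZZ$. Since at $x=i$ the height is $iq/p$, the number of horizontal lines already crossed is $\lfloor iq/p\rfloor$, and a careful comparison of orientations yields the exponent $\epsilon_i=(-1)^{\lfloor iq/p\rfloor}$.

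The palindromic shape of $u_{q/p}$ should then follow from a symmetry. The $\pi$-rotation of $H$ about the lattice point $(p,q)$, the midpoint of the chosen period segment, maps $\ell$ to itself and sends $x\mapsto 2p-x$, interchanging the two halves of the period with reversed orientation. I would use this involution to show that the subword recorded over $x=p+1,\ldots,2p-1$ is the formal inverse of the subword over $x=1,\ldots,p-1$, producing the factorization $u_{q/p}=a\,\hat{u}_{q/p}\,(\ast)\,\hat{u}_{q/p}^{-1}$, where $\hat{u}_{q/p}$ is the length-$(p-1)$ word from the interior crossings $x=1,\ldots,p-1$, the leading $a$ is the crossing at $x\equiv 0$ (height near $0$, giving sign $+1$ over $\gamma_1$), and the central letter $(\ast)$ is the crossing at $x=p$, the one fixed by the involution. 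Evaluating $x=p$, where the height is exactly $q$, determines whether $(\ast)$ lies over $\gamma_1$ or $\gamma_2$ by the parity of $p$ and fixes its exponent by the parity of $q$: one gets $a^{-1}$ when $p$ is even (using that $q$ is then odd) and $b^{(-1)^q}$ when $p$ is odd, exactly as claimed.

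The main obstacle will be the sign bookkeeping: pinning down, with a single fixed and consistent choice of orientations for $\ell$, for $\gamma_1,\gamma_2$, and for the loops $\mu_1,\mu_2$ representing $a$ and $b$, precisely how each $\pi$-rotation of $H$ flips the intersection sign, and then verifying that the accumulated effect is exactly $(-1)^{\lfloor iq/p\rfloor}$ together with the central exponent $(-1)^q$. Everything else is a routine, if delicate, translation between the planar picture and the alternating word; all the genuine content sits in the orientation analysis at the horizontal lattice lines and at the central crossing $x=p$.
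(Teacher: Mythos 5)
Your overall route is the same one the paper takes: the paper cites Riley for this lemma but justifies it geometrically in Remark~2 and proves the equivalent Lemma~\ref{u-word} by exactly your method, namely lifting $\alpha_{q/p}$ to a line of slope $q/p$ in $\RR^2-\ZZ^2$, reading off crossings with the vertical lattice lines over one period of horizontal length $2p$, assigning the letter by the parity of $i$ and the sign by which horizontal unit strip the crossing lies in. Your identification of the period $(2p,2q)$, of the exponent $(-1)^{\lfloor iq/p\rfloor}$, and of the central letter at $x=p$ (letter by parity of $p$, sign $(-1)^q$, with $q$ forced odd when $p$ is even) are all correct and match the paper.

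There is, however, one concrete error in the symmetry step: you assert that the $\pi$-rotation about $(p,q)$ ``maps $\ell$ to itself,'' which is false and directly contradicts your own (correct) earlier observation that no rotation in $H$ can fix a line avoiding $\ZZ^2$. If $\ell$ is the line $y=(q/p)x+\eta$, the rotation about $(p,q)$ carries it to the parallel line $\ell'\colon y=(q/p)x-\eta$ on the other side of the lattice line through the origin. The argument survives, but only after you note that $\ell$ and $\ell'$ meet the vertical line $x=i$ inside the \emph{same} open unit edge whenever $iq/p\notin\ZZ$, i.e.\ for all $i\not\equiv 0\pmod p$; so the rotation still identifies the crossing of $\ell$ at $x=p-j$ with the crossing at $x=p+j$ (orientation reversed) for $1\le j\le p-1$, while the crossings at $x\equiv 0\pmod p$, where the two lines straddle a lattice point, must be handled separately --- which is exactly where the initial $a$ and the central letter come from. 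Alternatively, you can bypass the geometry entirely with the identities $\lfloor (p+j)q/p\rfloor=q+\lfloor jq/p\rfloor$ and $\lfloor (p-j)q/p\rfloor=q-1-\lfloor jq/p\rfloor$ for $p\nmid j$, which show at once that the letters at $x=p-j$ and $x=p+j$ have the same base and opposite exponents, yielding the factorization $a\,\hat{u}_{q/p}\,(\ast)\,\hat{u}_{q/p}^{-1}$. With that repair the proof is complete; the remaining sign conventions you describe agree with those fixed by Figure~\ref{line}.
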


\begin{remark} \label{epsilon}
{\rm
(1)
The word $\hat{u}_{q/p}$ is obtained from the open line-segment
of slope $q/p$ extending from $(0, 0)$ to $(p, q)$ by ``reading'' its intersection
with the vertical lattice lines (see Figure ~\ref{line}).
The open line-segment cuts the vertical lattice line $x = i$
at the point $P_i$
with height $iq/p$.
Note that $\lfloor iq/p \rfloor$ is the height of the
integer lattice point just beneath $P_i$. Each time the line passes through
another horizontal lattice line, the signs of the $\epsilon_i$'s change.
Similarly, the word $u_{q/p}$ can be read from the closed line-segment
which is obtained by slightly shifting the closed line-segment
of slope $q/p$ joining $(0,0)$ with $(2p, 2q)$ to the upper-left direction
(cf. Proof of Lemma ~\ref{u-word}).

(2) For $r=0/1$ and $r=1/0$, we have $u_{0/1}=ab$ and $u_{1/0}=1$.
}
\end{remark}

\begin{figure}[h]
\begin{center}
\includegraphics{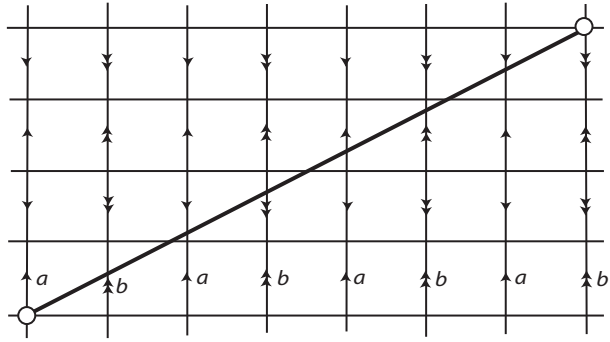}
\end{center}
\caption{\label{line}
The line of slope $4/7$ gives $\hat{u}_{4/7}=ba^{-1}b^{-1}aba^{-1}$, so the
free homotopy class of $\alpha_{4/7}$ is represented by
the cyclic word
$(u_{4/7})=(a\hat{u}_{4/7}b\hat{u}_{4/7}^{-1})=(aba^{-1}b^{-1}aba^{-1}bab^{-1}a^{-1}bab^{-1})$.
Since the inverse image of $\gamma_1$ (resp. $\gamma_2$)
in $\RR^2$ is the union of
the single arrowed (resp. double arrowed)
vertical edges,
a positive intersection with
a single arrowed (resp. double arrowed) edge
corresponds to $a$ (resp. $b$).
}
\end{figure}

In the remainder of this section,
we prove Main Theorem ~\ref{main_theorem_2}
by assuming Main Theorem ~\ref{main_theorem}.
To this end we prepare the following lemma.

\begin{lemma}
\label{some_automorphisms}
{\rm (1)} Let $\varphi$ be the automorphism of
the free group $\pi_1(B^3-t(\infty))=\langle a, b \rangle$
which sends the generating pair $(a,b)$ to
$(a^{-1},b^{-1})$, $(b,a)$ or $(b^{-1},a^{-1})$.
Then $\varphi(u_s)$ is conjugate to
$u_s$ or $u_s^{-1}$ for any $s\in\QQQ$.

{\rm (2)} Let $\varphi$ be the automorphism of
the free group $\pi_1(B^3-t(\infty))=\langle a, b \rangle$
which sends the generating pair $(a,b)$ to
$(a,b^{-1})$, $(a^{-1},b)$,
$(b^{-1},a)$ or $(b,a^{-1})$.
Then $\varphi(u_s)$ is conjugate to
$u_{s+1}$ or $u_{s+1}^{-1}$ for any $s\in\QQQ$.
\end{lemma}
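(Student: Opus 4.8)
The plan is to analyze how the specified automorphisms of the free group $\langle a,b\rangle$ act on the relator words $u_s$ by tracking their effect on the combinatorial/geometric data encoding $u_s$, namely the line-segment picture of Remark~\ref{epsilon} together with the explicit formulas of Lemma~\ref{presentation}. Since each $u_s$ is (up to cyclic permutation and inversion) the word read off from the slope-$s$ line, the cleanest approach is to identify each automorphism $\varphi$ with a symmetry of the Conway sphere $\Conways$ that preserves the upper tangle $\rtangle{\infty}$ and permutes the slopes in a controlled way. Concretely, I would show that the automorphism group generated by the maps $(a,b)\mapsto(a^{-1},b^{-1}),(b,a),(b^{-1},a^{-1})$ in part~(1) and those in part~(2) corresponds to the lift of the symmetry group of the four-times-punctured sphere acting on the slopes $s\in\QQQ$, with the part~(1) maps fixing $s$ (up to the equivalence $u_s\sim u_s^{\pm1}$) and the part~(2) maps realizing the translation $s\mapsto s+1$.

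\emph{First} I would dispose of part~(1). The three listed substitutions, together with the identity, form a group isomorphic to the Klein four-group, and each of them is geometrically the action on $F(a,b)=\pi_1(\PConway)/\llangle\alpha_\infty\rrangle$ of a mapping class of $\Conways$ fixing the slope $\infty$ and the isotopy class of $\alpha_s$. Since the cyclic word $(u_s)$ represents the free homotopy class of $\alpha_s$, and $\alpha_s$ has a well-defined unoriented isotopy class, any such symmetry must send $(u_s)$ to $(u_s)$ or to its reverse $(u_s^{-1})$; this is exactly the assertion that $\varphi(u_s)$ is conjugate to $u_s$ or $u_s^{-1}$. To make this rigorous without invoking mapping-class machinery, I would instead verify it directly from the formulas in Lemma~\ref{presentation}: the exponents $\epsilon_i=(-1)^{\lfloor iq/p\rfloor}$ enjoy the palindromic symmetry $\epsilon_{p-i}=(-1)^{q-1}\epsilon_i$, which lets one check by inspection that, e.g., $(a,b)\mapsto(b,a)$ reverses the reading direction of the line-segment and hence sends $(u_s)$ to $(u_s^{-1})$, while $(a,b)\mapsto(a^{-1},b^{-1})$ inverts all letters and produces a conjugate of $u_s^{-1}$. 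One handles each of the three cases by matching it against the appropriate reflection of the line-segment, splitting into the odd-$p$ and even-$p$ subcases of Lemma~\ref{presentation}.

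\emph{For part~(2)}, the key computation is the effect of the translation $s\mapsto s+1$ on the line-segment. Shifting the slope from $q/p$ to $(q+p)/p$ adds one to the height at every vertical lattice line $x=i$, so $\lfloor i(q+p)/p\rfloor=\lfloor iq/p\rfloor+i$ and therefore the new exponents are $\epsilon_i'=(-1)^i\epsilon_i$. Reading off the word, this multiplies the exponent of the letter at position $i$ by $(-1)^i$; since $a$ and $b$ occupy alternating positions (as noted after the displayed isomorphism for $u_r$ when $r\neq\infty$), the net effect is precisely to invert exactly one of the two generators while fixing the other. This matches the four substitutions listed in part~(2) up to the Klein-four symmetry of part~(1), so that $\varphi(u_s)$ becomes conjugate to $u_{s+1}$ or $u_{s+1}^{-1}$. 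I would again treat the odd and even denominator cases separately, and handle the degenerate slopes $s=0/1$ and $s=1/0$ using the explicit values in Remark~\ref{epsilon}(2).

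\emph{The main obstacle} I anticipate is bookkeeping the interaction between the reflection symmetries of part~(1) and the translation of part~(2): the eight substitutions in the two parts do not act independently, and one must be careful that the "or" in each statement ($u_s$ or $u_s^{-1}$, respectively $u_{s+1}$ or $u_{s+1}^{-1}$) absorbs the ambiguity coming from whether a given reflection reverses orientation. A secondary technical point is that the formulas in Lemma~\ref{presentation} are stated for $q/p$ with $p,q$ positive and coprime, so before applying the exponent identity $\epsilon_i'=(-1)^i\epsilon_i$ I must reduce $s+1$ to such a representative and confirm that the parity of the denominator (which selects between the two cases of the Lemma) is preserved under $s\mapsto s+1$ — it is, since the denominator is unchanged — while the parity of the numerator, which governs whether the final letter is $b^{\pm1}$ or $a^{\pm1}$, flips and must be reconciled with the corresponding flip induced by $\varphi$.
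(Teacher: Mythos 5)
Your proposal is correct, and its geometric skeleton coincides with the paper's actual proof: there, part (1) is disposed of by observing that $(B^3,t(\infty))$ carries a natural $(\ZZ/2\ZZ)^2$-action whose induced automorphisms of $\pi_1(B^3-t(\infty))$ are exactly the three listed substitutions and which preserves the unoriented isotopy class of every $\alpha_s$; part (2) is obtained by writing each of the four substitutions as a part-(1) map composed with $\psi\colon(a,b)\mapsto(a,b^{-1})$, which is induced by the half-Dehn twist along the meridian disk and hence satisfies $\psi(u_s)=u_{s+1}$. Where you genuinely diverge is in proposing to make this rigorous combinatorially from Lemma~\ref{presentation} instead of citing the tangle symmetries. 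Your key identity for part (2), $\lfloor i(q+p)/p\rfloor=\lfloor iq/p\rfloor+i$ and hence $\epsilon_i'=(-1)^i\epsilon_i$, is correct and is precisely the combinatorial shadow of the half-Dehn twist: since $b$ occupies the odd positions of $\hat{u}_{q/p}$, it yields $\hat{u}_{(q+p)/p}=\psi(\hat{u}_{q/p})$, and the middle letter is consistent in both parity cases of $p$. For part (1), the palindromic symmetry $\epsilon_{p-i}=(-1)^{q-1}\epsilon_i$ you invoke is the same identity the paper proves later (the Claim inside the proof of Lemma~\ref{maximal_piece}(2c)), so this route is viable, though decidedly more case-heavy than the two-line geometric argument; what the combinatorial version buys is independence from any appeal to mapping classes. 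Two small cautions: the formulas of Lemma~\ref{presentation} are stated only for positive $q/p$, so the purely combinatorial argument does not by itself cover all $s\in\QQQ$ (negative slopes and $s=\infty$ need either the geometric argument, which you also supply, or a separate reduction); and it is the parity of the \emph{denominator} $p$, not of the numerator, that selects between the two cases of Lemma~\ref{presentation} and decides whether the middle letter is $b^{(-1)^q}$ or $a^{-1}$ --- your closing remark has this slightly garbled, though it does not affect the substance of the argument.
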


\begin{proof}
(1) Observe that $(B^3,t(\infty))$ admits
a natural $(\ZZ/2\ZZ)^2$-action,
whose generators induce the automorphisms of $\pi_1(B^3-t(\infty))$
sending $(a,b)$ to $(a^{-1},b^{-1})$ and $(b,a)$, respectively.
Moreover, the action preserves the isotopy class of
the (unoriented) loop $\alpha_s$ for every $s\in\QQQ$.
Since any automorphism $\varphi$ satisfying the assumption
is induced by an element of the $(\ZZ/2\ZZ)^2$-action,
we obtain the desired result.

(2) Let $\varphi$ be an automorphism of $\pi_1(B^3-t(\infty))$
satisfying the assumption.
Then it is a composition of an automorphism in (1) and
the automorphism, $\psi$, sending $(a,b)$ to $(a,b^{-1})$.
Observe that $\psi$ is induced by the half-Dehn twist
along the meridian disk of $(B^3,t(\infty))$
and that the half-Denn twist maps $\alpha_s$ to $\alpha_{s+1}$.
Hence we see $\psi(u_s)=u_{s+1}$.
This, together with (1), implies the desired result.
\end{proof}

\begin{proof}[Proof of Main Theorem ~\ref{main_theorem_2}
assuming Main Theorem ~\ref{main_theorem}]

The if part is essentially equivalent to
\cite[Theorem ~1.1]{Ohtsuki-Riley-Sakuma}
and is proved as follows.
If $s$ belongs to the $\RGPP{r}$-orbit of $r$ or $\infty$,
then Main Theorem ~\ref{main_theorem} implies that
$u_s=1$ in $G(K(r))=\langle a, b \, | \, u_r \rangle$.
Thus there is an epimorphism from
$G(K(s))=\langle a, b \, | \, u_s \rangle$ to
$G(K(r))=\langle a, b \, | \, u_r \rangle$
which sends the upper-meridian-pair $(a,b)$ of $G(K(s))$ to
the upper-meridian-pair $(a,b)$ of $G(K(r))$.
To prove the remaining case,
note that there is a homeomorphism
$g:(S^3,K(s))\to (S^3,K(s+1))$
preserving the upper/lower tangles,
such that the restriction of $g$ to
$(B^3,t(\infty))$ is a half-Dehn twist.
Thus $g$ induces an isomorphism
from
$G(K(s))=\langle a, b \, | \, u_s \rangle$ to
$G(K(s+1))=\langle a, b \, | \, u_{s+1} \rangle$
which sends the upper-meridian-pair $(a,b)$ of $G(K(s))$ to
the upper-meridian-pair $(a,b^{-1})$ of $G(K(s+1))$.
So, if $s+1$ belongs to the $\RGPP{r}$-orbit of $r$ or $\infty$,
then we have an epimorphism
$G(K(s))\cong G(K(s+1))\to G(K(r))$
sending $(a,b)$ to $(a,b^{-1})$.

Next, we prove the only if part.
Suppose that there is an upper-meridian-pair preserving
epimorphism $f$ from
$G(K(s))=\langle a, b \, | \, u_s \rangle$ to
$G(K(r))=\langle a, b \, | \, u_r \rangle$.
Then $f$ lifts to an automorphism $\varphi$ of the free
group $\pi_1(B^3-t(\infty))=\langle a,b\rangle$
satisfying the condition in Lemma ~\ref{some_automorphisms},
modulo post composition of an inner-automorphism.
Thus $\varphi(u_s)$ is conjugate to
$u_s$, $u_s^{-1}$, $u_{s+1}$ or $u_{s+1}^{-1}$
by Lemma ~\ref{some_automorphisms}.
Since $\varphi$ is a lift of the homomorphism $f$,
$u_s$ or $u_{s+1}$ represents the trivial element of $G(K(r))$,
accordingly.
Hence, by Main Theorem ~\ref{main_theorem},
we see that $s$ or $s+1$
belongs to the $\RGPP{r}$-orbit of $r$ or $\infty$, accordingly.
\end{proof}

\section{Sequences associated with 2-bridge links}
\label{sequences}

In this section, we define two sequences $S(r)$ and $T(r)$ of slope $r$
and two cyclic sequences $CS(r)$ and $CT(r)$ of slope $r$
all of which arise from the single relator $u_r$ of the presentation
$G(K(r))=\langle a, b \, | \, u_r \rangle$ given in Section ~\ref{group_presentation},
and observe several important properties of these sequences,
so that we
can adopt small cancellation theory in the succeeding sections.

We first fix some definitions and notation.
Let $X$ be a set.
By a {\it word} in $X$, we mean a finite sequence
$x_1^{\epsilon_1}x_2^{\epsilon_2}\cdots x_n^{\epsilon_n}$
where $x_i\in X$ and $\epsilon_i=\pm1$.
Here we call $x_i^{\epsilon_i}$ the {\it $i$-th letter} of the word.
For two words $u, v$ in
$X$, by
$u \equiv v$ we denote the {\it visual equality} of $u$ and
$v$, meaning that if $u=x_1^{\epsilon_1} \cdots x_n^{\epsilon_n}$
and $v=y_1^{\delta_1} \cdots y_m^{\delta_m}$ ($x_i, y_j \in X$; $\epsilon_i, \delta_j=\pm 1$),
then $n=m$ and $x_i=y_i$ and $\epsilon_i=\delta_i$ for each $i=1, \dots, n$.
For example, two words $x_1x_2x_2^{-1}x_3$ and $x_1x_3$ ($x_i \in X$) are {\it not} visually equal,
though
they
are equal as elements of the free group with basis $X$.
The length of a word $v$ is denoted by $|v|$.
A word $v$ in
$X$ is said to be {\it reduced} if $v$ does not contain $xx^{-1}$ or $x^{-1}x$ for any $x \in X$.
A word is called {\it cyclically reduced} if all its cyclic permutations are reduced.
A {\it cyclic word} is defined to be the set of all cyclic permutations of a
cyclically reduced word. By $(v)$ we denote the cyclic word associated with a
cyclically reduced word $v$.
Also by $(u) \equiv (v)$ we mean the {\it visual equality} of two cyclic words
$(u)$ and $(v)$. In fact, $(u) \equiv (v)$ if and only if $v$ is visually a cyclic shift
of $u$.

\begin{definition}
{\rm (1) Let $v$ be a reduced word in
$\{a,b\}$. Decompose $v$ into
\[
v \equiv v_1 v_2 \cdots v_t,
\]
where, for each $i=1, \dots, t-1$, all letters in $v_i$ have positive (resp. negative) exponents,
and all letters in $v_{i+1}$ have negative (resp. positive) exponents.
Then the sequence of positive integers
$S(v):=(|v_1|, |v_2|, \dots, |v_t|)$ is called the {\it $S$-sequence of $v$}.

(2) Let $(v)$ be a cyclic word in
$\{a, b\}$. Decompose $(v)$ into
\[
(v) \equiv (v_1 v_2 \cdots v_t),
\]
where all letters in $v_i$ have positive (resp. negative) exponents,
and all letters in $v_{i+1}$ have negative (resp. positive) exponents (taking
subindices modulo $t$). Then the {\it cyclic} sequence of positive integers
$CS(v):=\lp |v_1|, |v_2|, \dots, |v_t| \rp$ is called
the {\it cyclic $S$-sequence of $(v)$}.
Here the double parentheses denote that the sequence is considered modulo
cyclic permutations.

(3) A reduced word $v$ in $\{a,b\}$ is said to be {\it alternating}
if $a^{\pm 1}$ and $b^{\pm 1}$ appear in $v$ alternately,
i.e., neither $a^{\pm2}$ nor $b^{\pm2}$ appears in $v$.
A cyclic word $(v)$ is said to be {\it alternating}
if all cyclic permutations of $v$ are alternating.
In the latter case, we also say that $v$ is {\it cyclically alternating}.
}
\end{definition}

The following proposition is obvious from the definition.

\begin{proposition} \label{meaning_of_S-sequence}
{\rm (1)}
An alternating word
in $\{a,b\}$ is completely determined by the
initial letter
and the associated $S$-sequence.
\medskip

{\rm (2)} Let $v$ be a cyclically reduced word in
$\{a,b\}$ of length $\ge 2$.
Then the $S$-sequence $S(v)$ represents
the cyclic $S$-sequence $CS(v)$ of $(v)$
if and only if the initial exponent of $v$
is different from the terminal exponent of $v$.
\end{proposition}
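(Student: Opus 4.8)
The plan is to verify both parts directly from the definitions, reading off each letter of $v$ from the combinatorial data in each case. For part (1), I would recover $v$ position by position. Since $v$ is alternating, the underlying letters (ignoring exponents) are forced to alternate between $a$ and $b$; hence, once the base letter of the initial letter is fixed, the base letter in every position is determined by its parity. It then remains to recover the exponents. Writing $v \equiv v_1 v_2 \cdots v_t$ for the block decomposition defining $S(v) = (|v_1|, \dots, |v_t|)$, all letters in a given block $v_i$ share a common exponent, and consecutive blocks carry opposite exponents by definition; thus the exponent throughout $v_i$ equals $\epsilon_1$ for $i$ odd and $-\epsilon_1$ for $i$ even, where $\epsilon_1$ is the exponent of the initial letter. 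The entries $|v_i|$ of $S(v)$ record exactly where each block starts and ends, so the exponent in every position is pinned down. Combining the base letter and the exponent in each position recovers $v$ uniquely.

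For part (2), the key observation is that the linear block decomposition $v \equiv v_1 \cdots v_t$ and the maximal-block decomposition of the cyclic word $(v)$ agree everywhere except possibly at the \emph{seam} joining the terminal letter of $v_t$ to the initial letter of $v_1$, and that the behaviour there is governed entirely by whether these two letters share an exponent. I would split into two cases. If the initial and terminal exponents of $v$ differ, then $v_t$ and $v_1$ carry opposite exponents, so the seam is a genuine boundary between maximal constant-exponent blocks of $(v)$; hence the cyclic decomposition is precisely $(v) \equiv (v_1 v_2 \cdots v_t)$, giving $CS(v) = \lp |v_1|, \dots, |v_t| \rp$, which is exactly the cyclic sequence represented by $S(v)$. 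If instead the two exponents coincide, then $v_t$ and $v_1$ merge into a single maximal block of length $|v_t| + |v_1|$ in $(v)$, so that $CS(v) = \lp |v_t| + |v_1|, |v_2|, \dots, |v_{t-1}| \rp$ has $t-1$ terms. Since the number of terms is invariant under cyclic permutation, a cyclic sequence with $t-1$ terms cannot equal one with $t$ terms, so $S(v)$ does not represent $CS(v)$. This establishes the equivalence.

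I do not expect a substantial obstacle, as the content is purely a matter of tracking how maximal constant-exponent blocks behave under passage from a linear word to its cyclic closure. The only point requiring a word of care is the degenerate case $t = 1$, in which $v$ is a single constant-exponent block and the alternating cyclic decomposition collapses; this case never arises for the words to which the proposition is applied, namely the relators $u_r$, which always involve both exponents. I would therefore simply note that we may assume $t \ge 2$ (equivalently, that both exponents occur in $v$), whence the case analysis above is exhaustive and the two parts follow immediately from unwinding the definitions.
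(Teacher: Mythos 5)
Your argument is correct and is exactly the routine unwinding of the definitions that the paper has in mind: the paper offers no proof of this proposition, stating only that it is obvious from the definition. Your explicit treatment of the seam between $v_t$ and $v_1$, together with the block count ($t$ versus $t-1$, noting that the merge occurs precisely when $t$ is odd), is the right verification, and your remark restricting to $t\ge 2$ (equivalently, that both exponent signs occur, as they do for every word to which the proposition is applied) correctly disposes of the only degenerate case.
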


\begin{definition}
\label{def4.1(3)}
{\rm
For a rational number $r$ with $0<r\le 1$,
let $u_r$ be the word in $\{a,b\}$ defined in Lemma ~\ref{presentation}.
Then the symbol $S(r)$ (resp. $CS(r)$) denotes the
$S$-sequence $S(u_r)$ of $u_r$
(resp. cyclic $S$-sequence $CS(u_r)$ of $(u_r)$), which is called
the {\it S-sequence of slope $r$}
(resp. the {\it cyclic S-sequence of slope $r$}).}
\end{definition}

We shall first state Propositions
~\ref{properties0}, \ref{properties}, \ref{induction1} and \ref{sequence}
below concerning the sequences defined in the above, and then prove the propositions
in the remainder of this section.
Propositions ~\ref{induction1} and \ref{sequence} play crucial roles
in the proof of Main Theorem ~\ref{main_theorem}.
Though we need those propositions only for
the sequences $S(r)$ and $CS(r)$ with $0< r\le 1$,
we need to extend the definitions of $S(r)$ and $CS(r)$ to
an arbitrary positive rational number $r$
(Definition ~\ref{def4.1(3)_for_r>1}),
in order to prove these propositions.
Thus Propositions
~\ref{properties0}, \ref{properties}, \ref{induction1} and \ref{sequence}
below should be regarded as propositions for
every positive rational number $r$.

Throughout the remainder of this section,
$r=q/p$ denotes a positive rational number,
where $p$ and $q$ are relatively prime positive integers.
Then $r$ has
a continued fraction expansion
\begin{center}
\begin{picture}(230,70)
\put(0,48){$\displaystyle{
r=q/p=
\cfrac{1}{m_1+
\cfrac{1}{ \raisebox{-5pt}[0pt][0pt]{$m_2 \, + \, $}
\raisebox{-10pt}[0pt][0pt]{$\, \ddots \ $}
\raisebox{-12pt}[0pt][0pt]{$+ \, \cfrac{1}{m_k}$}
}} \
=:[m_1,m_2, \dots,m_k] ,}$}
\end{picture}
\end{center}
where $k \ge 1$, $m_1\in \mathbb{Z}_+\cup \{0\}$,
$(m_2, \dots, m_k) \in (\mathbb{Z}_+)^{k-1}$ and
$m_k \ge 2$ unless $k=1$.
Note that $m_1\ge 1$ if $0<r\le 1$,
whereas $m_1=0$ if $r>1$.

\begin{proposition}
\label{properties0}
For the positive rational number $r=q/p$,
the
sequence $S(r)$ has length $2q$,
and it represents the cyclic sequence $CS(r)$.
Moreover the cyclic sequence $CS(r)$ is invariant by
the half-rotation; that is,
if $s_j(r)$ denotes the $j$-th term of $S(r)$
($1\le j\le 2q$), then
$s_j(r)=s_{q+j}(r)$ for every integer $j$ ($1\le j\le q$).
\end{proposition}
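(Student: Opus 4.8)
The plan is to reduce all three assertions to a single closed formula for the exponents of the letters of the relator $u_r$, and then to read them off by elementary manipulation of the floor function. I treat the case $0<r\le 1$ in full (where $S(r)=S(u_r)$ by Definition~\ref{def4.1(3)}); the case $r>1$ is then handled through the extended Definition~\ref{def4.1(3)_for_r>1}, which is tailored precisely so that the same conclusions persist. Write $u_r$ as the alternating word of length $|u_r|=2p$ given by Lemma~\ref{presentation}, and let $\sigma_k\in\{+1,-1\}$ denote the exponent of its $k$-th letter, $1\le k\le 2p$.

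The first, and main, step is to establish the uniform formula
\[
\sigma_k=(-1)^{\lfloor (k-1)q/p\rfloor},\qquad 1\le k\le 2p.
\]
For $1\le k\le p+1$ this is immediate from $\epsilon_i=(-1)^{\lfloor iq/p\rfloor}$ together with the value of the central letter (whose exponent is $(-1)^q=(-1)^{\lfloor pq/p\rfloor}$ in both parities of $p$). The real work is the second half $p+2\le k\le 2p$, which is read off from $\hat u_r^{-1}$ and therefore carries \emph{reversed and inverted} exponents; matching these against the floor formula is the one genuinely non-formal point, and I expect it to be the main obstacle. It rests on the reflection identity $\lfloor (p-i)q/p\rfloor=q-1-\lfloor iq/p\rfloor$, valid because $\gcd(p,q)=1$ forces $iq/p\notin\ZZ$ for $1\le i\le p-1$; feeding this into the reversed-inverted exponents of $\hat u_r^{-1}$ collapses them to the single formula above.

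Granting the formula, note first that $\sigma$ extends to a $2p$-periodic sequence, since $\lfloor(k+2p-1)q/p\rfloor=\lfloor(k-1)q/p\rfloor+2q$. The length of $S(r)$ is the number of maximal constant-sign runs, i.e. the number of sign changes read cyclically; a change occurs between letters $k$ and $k+1$ exactly when $\lfloor kq/p\rfloor-\lfloor(k-1)q/p\rfloor$ is odd. Because $0<r\le 1$ gives $q\le p$, every such difference is $0$ or $1$, so the number of sign changes over one period is $\sum_{k=1}^{2p}\bigl(\lfloor kq/p\rfloor-\lfloor(k-1)q/p\rfloor\bigr)=2q$, proving $S(r)$ has length $2q$. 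That $S(r)$ represents $CS(r)$ then follows from Proposition~\ref{meaning_of_S-sequence}(2), since the initial and terminal exponents $\sigma_1=+1$ and $\sigma_{2p}=(-1)^{2q-1}=-1$ differ.

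For the half-rotation symmetry the formula yields the shift identity $\sigma_{k+p}=(-1)^q\sigma_k$, so translating $u_r$ by $p$ letters multiplies every exponent by the constant sign $(-1)^q$; as this operation preserves run lengths, the block-length sequences of the two halves (positions $1,\dots,p$ and $p+1,\dots,2p$) coincide. Since $\sigma_p=(-1)^{q-1}$ differs from $\sigma_{p+1}=(-1)^q$, and $\sigma_{2p}=-1$ differs from $\sigma_1=+1$, genuine sign changes occur at both junctions, so no block straddles a midpoint and each half is a union of whole blocks; counting internal changes via $\sum_{k=1}^{p-1}\bigl(\lfloor kq/p\rfloor-\lfloor(k-1)q/p\rfloor\bigr)=q-1$ shows each half carries exactly $q$ blocks. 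Hence $S(r)=(l_1,\dots,l_q,l_1,\dots,l_q)$, i.e. $s_j(r)=s_{q+j}(r)$. Finally, for $r>1$ one has $2q>2p$, so the naive $S(u_r)$ is too short and the statement is about the object fixed by Definition~\ref{def4.1(3)_for_r>1}; I expect to close that case either by applying the same floor computation to the extended definition or by reducing it to a slope in $(0,1]$.
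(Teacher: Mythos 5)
Your argument is correct for $0<r\le 1$ and reaches all three conclusions, but by a genuinely different route from the paper's. The paper works on the \emph{block} side: it first converts the letter data into a closed formula for the block lengths themselves, $s_j(r)=\lfloor jp/q\rfloor_*-\lfloor (j-1)p/q\rfloor_*$ (Lemma~\ref{j-term}, proved by counting the points $P_i^+$ in the horizontal strips $\RR\times(j-1,j)$); the length $2q$ is then the number of strips, and the half-rotation symmetry is a one-line telescoping in the block index $j$, using $\lfloor (q+j)p/q\rfloor_*=p+\lfloor jp/q\rfloor_*$. You stay entirely on the \emph{letter} side: your uniform exponent formula $\sigma_k=(-1)^{\lfloor (k-1)q/p\rfloor}$ is the same as the paper's Lemma~\ref{u-word} (since $\lceil t\rceil^*-1=\lfloor t\rfloor$ for all $t$), but you derive it algebraically from Riley's formula in Lemma~\ref{presentation} via the reflection identity $\lfloor (p-i)q/p\rfloor=q-1-\lfloor iq/p\rfloor$, where the paper reads it off geometrically from the shifted line $L^+(r)$; you then get the length by counting cyclic sign changes (valid because each difference $\lfloor kq/p\rfloor-\lfloor(k-1)q/p\rfloor$ is $0$ or $1$ when $q\le p$, so parity equals value and the sum telescopes to $2q$) and the symmetry from the shift identity $\sigma_{k+p}=(-1)^q\sigma_k$ together with the check that no block straddles either midpoint. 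Your version is more elementary and self-contained for this proposition; the paper's detour through Lemma~\ref{j-term} buys the explicit formula for $s_j(r)$, which is reused heavily afterwards (Lemmas~\ref{shifting S-formula}, \ref{properties01}, \ref{properties02}, \ref{exchange1}) and is what makes the extension to $r>1$ in Definition~\ref{def4.1(3)_for_r>1} natural.

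The one place you should be careful is the case $r>1$, which you only sketch. There your two counting arguments genuinely break: the differences $\lfloor kq/p\rfloor-\lfloor(k-1)q/p\rfloor$ can exceed $1$, so parity no longer equals value, and indeed $S(u_r)\ne S(r)$ for $r>1$ (Remark~\ref{remark:S(r)=S(u_r)}). However, once $S(r)$ is taken to be the sequence of Definition~\ref{def4.1(3)_for_r>1}, the length $2q$ and the fact that $S(r)$ represents $CS(r)$ are essentially built into the definition, and the half-rotation symmetry is exactly the paper's telescoping computation in $j$, which is valid for every positive $r$. So this is a small gap to close rather than a flaw in the method.
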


\begin{proposition}
\label{properties}
For the positive rational number $r=[m_1,m_2, \dots,m_k]$,
putting
$m=m_1$, we have the following.
\begin{enumerate}[\indent \rm (1)]
\item Suppose $k=1$, i.e., $r=1/m$.
Then $S(r)=(m,m)$.

\item Suppose $k\ge 2$. Then each term of $S(r)$ is either $m$ or $m+1$,
and $S(r)$ begins with $m+1$ and ends with $m$.
Moreover, the following hold.

\begin{enumerate}[\rm (a)]
\item If $m_2=1$, then no two consecutive terms of $S(r)$ can be $(m, m)$,
so there is a sequence
of positive integers $(t_1,t_2,\dots,t_s)$ such that
\[
S(r)=(t_1\langle m+1\rangle, m, t_2\langle m+1\rangle, m, \dots,
t_s\langle m+1\rangle, m).
\]
Here, the symbol ``$t_i\langle m+1\rangle$'' represents $t_i$ successive $m+1$'s.

\item If $m_2 \ge 2$, then no two consecutive terms of $S(r)$ can be $(m+1, m+1)$,
so there is a sequence
of positive integers $(t_1,t_2,\dots,t_s)$ such that
\[
S(r)=(m+1, t_1\langle m\rangle, m+1, t_2\langle m\rangle,
\dots,m+1, t_s\langle m\rangle).
\]
Here, the symbol ``$t_i\langle m\rangle$'' represents $t_i$ successive $m$'s.
\end{enumerate}
\end{enumerate}
\end{proposition}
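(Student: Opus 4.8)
The plan is to work directly from the explicit relator $u_r$ in Lemma~\ref{presentation}, reducing the whole statement to an elementary analysis of the sign sequence $\epsilon_i=(-1)^{\lfloor iq/p\rfloor}$. I would first treat the substantive case $0<r\le 1$, where $m=m_1\ge 1$ and $q\le p$, so that $\lfloor iq/p\rfloor$ increases by $0$ or $1$ as $i$ runs from $1$ to $p-1$. A block of the $S$-sequence is a maximal run of consecutive letters of $\hat u_{q/p}$ with equal exponent; since the exponent of the $i$-th letter is $\epsilon_i$ and the parity of $\lfloor iq/p\rfloor$ flips exactly when $\lfloor iq/p\rfloor$ increments, each block corresponds to a single value $n=\lfloor iq/p\rfloor$. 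Hence the length of the block at level $n$ equals the number of integers $i$ with $\lfloor iq/p\rfloor=n$, i.e.\ the number of integers in the interval $[\,np/q,(n+1)p/q\,)$. This is the key reduction: the $S$-sequence becomes the gap sequence of the points $np/q$.

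Since $p/q=1/r=m+r'$ with $r'=[m_2,\dots,m_k]\in(0,1)$, each such interval has length $m+r'$ and so contains $m$ or $m+1$ integers; writing $\ell_n$ for the block length I record $\ell_n=m+c_n$ with $c_n=\lceil(n+1)r'\rceil-\lceil nr'\rceil\in\{0,1\}$, the standard Sturmian sequence of angle $r'$. This already yields that every term of $S(r)$ is $m$ or $m+1$. For the boundary terms I read them off the explicit form $u_r=a\hat u_{q/p}b^{(-1)^q}\hat u_{q/p}^{-1}$ (and its $p$-even analogue): using $\epsilon_1=+1$ and $\epsilon_{p-1}=(-1)^{q-1}$ one checks that the leading $a$ merges with the first positive run of $\hat u_{q/p}$ to give a first block of length $m+1$, while $c_{q-1}=0$ (because $qr'\in\ZZ$, the denominator of $r'$ in lowest terms being $q$) forces the last block to have length $m$. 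The half-rotation invariance of Proposition~\ref{properties0} then propagates this across the whole length-$2q$ sequence, and Proposition~\ref{meaning_of_S-sequence}(2) guarantees that $S(r)$ genuinely begins with $m+1$ and ends with $m$.

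For the dichotomy (a)/(b) the only input I need is the elementary fact that a half-open interval of length $2r'$ contains two integers only if $2r'>1$ and contains no integer only if $2r'<1$. Applied to $(\,nr',(n+2)r'\,]$ this shows that two consecutive $c_n=1$ (two consecutive $m+1$ blocks) can occur only when $r'>1/2$, and two consecutive $c_n=0$ (two consecutive $m$ blocks) only when $r'<1/2$. Since $r'=1/(m_2+\theta)$ with $\theta=[m_3,\dots,m_k]\in[0,1)$ (and $\theta=0$ when $k=2$), one has $r'>1/2\iff m_2=1$ and $r'\le 1/2\iff m_2\ge 2$; thus $m_2=1$ forbids two consecutive $m$'s and $m_2\ge 2$ forbids two consecutive $m+1$'s. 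Combining each non-clustering property with the boundary behaviour already established forces exactly the normal forms in (a) and (b). The base case $k=1$, i.e.\ $r=1/m$, is immediate from Lemma~\ref{presentation}, since then $\epsilon_i=+1$ for all $i$, $\hat u_{1/m}$ has length $m-1$, and $S(r)=(m,m)$.

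The part I expect to be most delicate is making the statement uniform over all positive $r$, as the excerpt insists. For $r>1$ one has $m_1=0$, the word-based definition of Lemma~\ref{presentation} no longer applies verbatim, and the cleanest route is to invoke the extended definition (Definition~\ref{def4.1(3)_for_r>1}); I would either re-run the interval-counting argument against it or, more systematically, set up an induction on the continued-fraction length $k$, which is presumably why the definitions are extended to $r>1$ in the first place. A secondary source of friction is the bookkeeping at the two junctions between $\hat u_{q/p}$ and its inverse inside $u_r$ (and the cyclic wrap-around), where one must confirm that no spurious merging of blocks occurs; here the comparison $\epsilon_{p-1}=(-1)^{q-1}$ versus the exponent $(-1)^q$ of the central letter shows the relevant exponents are opposite, so the blocks stay separate and the half-rotation symmetry of Proposition~\ref{properties0} applies cleanly.
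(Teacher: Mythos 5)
Your proposal is correct and follows essentially the same route as the paper: both arguments subtract $m=m_1$ from every term and analyse the resulting $0$--$1$ Sturmian sequence of angle $r'=c/q=[m_2,\dots,m_k]$, obtaining the dichotomy (a)/(b) by comparing $2c/q$ with $1$. The paper packages this as the termwise identity $S(q/p)=S(q/c)+(m,\dots,m)$ combined with the uniform formula $s_j(r)=\lfloor jp/q\rfloor_*-\lfloor (j-1)p/q\rfloor_*$ from Lemma~\ref{j-term} and Definition~\ref{def4.1(3)_for_r>1}, which removes the junction bookkeeping inside $a\hat{u}_r b^{\pm1}\hat{u}_r^{-1}$ that you flag and handles the case $r>1$ (where $m_1=0$) with no additional work.
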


\begin{remark}
{\rm In \cite{hirasawa-murasugi},
Hirasawa and Murasugi defined,
as one of the key notions of their paper, the sequence of signs for a pair $(p, q)$,
which actually gives rise to our $S$-sequence $S(q/p)$ of slope $q/p$. They also
observed several properties for the sequence of signs for $(p, q)$,  which are
very similar to the properties of $S(q/p)$ stated in Proposition ~\ref{properties}.}
\end{remark}

\begin{definition}
\label{def_T(r)}
{\rm
If $k\ge 2$, the symbol $T(r)$ denotes the sequence
$(t_1,t_2,\dots,t_s)$ in
Proposition ~\ref{properties},
which is called the
{\it $T$-sequence of slope $r$}.
The symbol $CT(r)$ denotes the cyclic
sequence represented by $T(r)$, which is called the
{\it cyclic $T$-sequence of slope $r$}. }
\end{definition}

\begin{example}
\label{cyclic_sequence}
{\rm (1) Let $r={10}/{37}=[3,1,2,3]$.
By Lemma ~\ref{presentation}, we
see that the $S$-sequence of $\hat{u}_r$ is
\[
S(\hat{u}_r)=(3, 4, 4, 3, 4, 4, 3, 4, 4, 3).
\]
By the formula for $u_r$ in Lemma ~\ref{presentation},
this implies
\[
S(r)=S(u_r) =
(\underbrace{4,4,4}_3,3,\underbrace{4, 4}_2, 3, \underbrace{4, 4}_2, 3,
\underbrace{4,4,4}_3,3, \underbrace{4, 4}_2, 3, \underbrace{4, 4}_2, 3).
\]
So $T(r)=(3, 2, 2, 3, 2, 2)$ and $CT(r) = \lp 3, 2, 2, 3, 2, 2 \rp$.

(2) Let $r={8}/{35}=[4,2,1,2]$.
Again by Lemma ~\ref{presentation},
we obtain that the $S$-sequence of $\hat{u}_r$ is
\[
S(\hat{u}_r)=(4, 4, 5, 4, 4, 5, 4, 4).
\]
By the formula for $u_r$ in Lemma ~\ref{presentation},
this implies
\[
S(r)=S(u_r)=
(5, \underbrace{4}_1, 5,
\underbrace{4, 4}_2, 5, \underbrace{4, 4}_2, 5, \underbrace{4}_1, 5,
\underbrace{4, 4}_2, 5, \underbrace{4, 4}_2).
\]
So $T(r) = (1, 2, 2, 1, 2, 2)$ and $CT(r) = \lp 1, 2, 2, 1, 2, 2 \rp$.
}
\end{example}

\begin{proposition}
\label{induction1}
For the rational number $r=[m_1, m_2, \dots, m_k]$,
let $r'$ be the rational number defined as
\[
r'=
\begin{cases}
[m_3, \dots, m_k] & \text{if $m_2=1$};\\
[m_2-1, m_3, \dots, m_k] & \text{if $m_2 \ge 2$}.
\end{cases}
\]
Then we have
\[
T(r)=
\begin{cases}
S(r')  & \text{if $m_2=1$}; \\
\overleftarrow{S}(r') & \text{if $m_2 \ge 2$},
\end{cases}
\]
where $\overleftarrow{S}(r')$ denotes the sequence obtained from
$S(r')$ reversing its order.
\end{proposition}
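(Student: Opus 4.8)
The plan is to read the identity directly off the lattice description of the relator word, using the explicit exponents $\epsilon_i=(-1)^{\lfloor iq/p\rfloor}$ of Lemma~\ref{presentation} together with the structural form of $S(r)$ already recorded in Proposition~\ref{properties}. Write $r=q/p$ and $m=m_1=\lfloor p/q\rfloor$. Recall from Remark~\ref{epsilon} that $S(\hat u_r)$ is obtained by reading the intersections of the line of slope $r$ from $(0,0)$ to $(p,q)$ with the integer grid: each crossing of a vertical lattice line contributes a letter, while the exponent flips precisely at each crossing of a horizontal lattice line, and $S(r)=S(u_r)$ is then assembled from $\hat u_r$ by Lemma~\ref{presentation}. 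Hence each term of $S(r)$ counts the vertical lattice lines met inside one horizontal strip, and by Proposition~\ref{properties} every such count equals $m$ or $m+1$. Since $T(r)$ is by definition the run-length encoding of the resulting $\{m,m+1\}$-pattern, the whole statement amounts to identifying this derived sequence with $S(r')$.

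The engine is one step of the continued-fraction (Gauss-map) recursion. The exponent flips occur at the indices where $\lfloor iq/p\rfloor$ increases, and the gaps between consecutive such indices are the terms of $S(\hat u_r)$, each equal to $m=\lfloor p/q\rfloor$ or $m+1$ according to whether the corresponding horizontal strip contains one extra vertical line. The coarser binary pattern recording which gaps are long is itself the cutting sequence of a line whose slope is governed by the tail $[m_2,\dots,m_k]$; this is the standard fact that run-length encoding of a cutting sequence implements one step of the Gauss map. I would make this precise by a direct computation with the floors $\lfloor iq/p\rfloor$, extracting the positions of the rare value among the gaps and recognising their spacing as the intersection data defining $S([m_2,\dots,m_k])$, up to an adjustment of the leading partial quotient.

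It then remains to match this derived cutting sequence with $T(r)$ and compare it to $S(r')$, splitting into the two cases of Proposition~\ref{properties}. When $m_2=1$ the rare value among the gaps is $m$, which occurs as isolated separators, so $T(r)$ records the lengths of the runs of $m+1$; the Gauss step drops the leading $1$ and identifies $T(r)$ with $S([m_3,\dots,m_k])=S(r')$, with the order preserved. When $m_2\ge 2$ the rare value is instead $m+1$, so $T(r)$ records the lengths of the runs of $m$; here the step produces $[m_2-1,m_3,\dots,m_k]=r'$, but because the two symbols now play interchanged roles the induced identification reverses the order, yielding $T(r)=\overleftarrow{S}(r')$. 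The lengths are forced to agree by Proposition~\ref{properties0}: $S(r)$ has length $2q$ and $S(r')$ has length $2q'$, and one checks that the number of occurrences of the rare value, namely the length of $T(r)$, is exactly $2q'$.

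The main obstacle will be the bookkeeping at the endpoints and across the ``seam'' created by the doubling built into $u_r$ as opposed to $\hat u_r$. Proposition~\ref{properties} records that $S(r)$ begins with $m+1$ and ends with $m$, and by Proposition~\ref{properties0} it is invariant under the half-rotation; one must check that the run pattern closes up correctly across this seam, so that the derived sequence is a full copy of $S(r')$ of length $2q'$ rather than a single half of length $q'$. Verifying the orientation reversal in the case $m_2\ge 2$ is the other delicate point, since it is precisely the interchange of the two symbols under the Gauss step that must be shown to reverse, and not merely cyclically permute, the run-length data. Organising the argument as an induction on the length $k$ of the continued fraction, with $r'$ as the smaller datum, is a convenient way to discharge these boundary verifications once the single Gauss step has been established.
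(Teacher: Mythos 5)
Your overall strategy---read $S(r)$ as a cutting sequence, observe that $T(r)$ is its run-length encoding, and identify one run-length-encoding step with one step of the continued-fraction recursion---is the same strategy the paper uses (via Lemma~\ref{shifting S-formula}, which normalizes $S(q/p)$ to the $\{0,1\}$-valued sequence $S(q/c)$ with $q/c=[0,m_2,\dots,m_k]$, and Lemma~\ref{properties02}, which is exactly your ``direct computation with the floors'' for the case $m_2=1$). That half of your plan is sound, modulo one technical device you would discover you need: the intermediate slopes produced by the Gauss step are greater than $1$, so their $S$-sequences must be defined by the floor formula $s_j=\lfloor jp/q\rfloor_*-\lfloor (j-1)p/q\rfloor_*$ rather than from the word $u_r$ (the paper's Definition~\ref{def4.1(3)_for_r>1}; see Remark~\ref{remark:S(r)=S(u_r)}).

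The genuine gap is in the case $m_2\ge 2$. Your stated reason for the order reversal---that ``the two symbols now play interchanged roles,'' so ``the induced identification reverses the order''---is not a valid mechanism: interchanging the two symbols of a binary sequence and then recording the run lengths of the other symbol preserves the order of the run-length data; it does not reverse it. The reversal is real (for $r=8/35=[4,2,1,2]$ one has $T(r)=(1,2,2,1,2,2)$ while $S(r')=S(3/5)=(2,2,1,2,2,1)$), but its source is not the symbol swap; it is a point-reflection symmetry of the lattice picture relating the slopes $c/q$ and $(q-c)/q$, which the paper captures in the pointwise identity $s_i(q/(q-c))+s_{q+1-i}(q/c)=1$ (Lemma~\ref{exchange1}), giving $S(q/c)=\overleftarrow{S}(q/(q-c))_{0\leftrightarrow 1}$ and hence $T(q/c)=\overleftarrow{T}(q/(q-c))$; the case $m_2\ge2$ is then reduced to the already-proved case $m_2=1$ applied to $q/(q-c)=[0,1,m_2-1,m_3,\dots,m_k]$, which has the same $r'$. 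You correctly flag the reversal as ``the other delicate point,'' but your proposal contains no argument that would produce it, and the heuristic you offer in its place points the wrong way; a naive run-position computation in the $m_2\ge2$ case in fact looks order-preserving until the endpoint bookkeeping is done exactly, which is precisely why an explicit symmetry like Lemma~\ref{exchange1} (or an equally explicit endpoint analysis) is needed.
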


\begin{proposition}
\label{sequence}
For the positive rational number $r=[m_1,m_2, \dots,m_k]$,
putting $m=m_1$,
the sequence $S(r)$
has a decomposition $(S_1, S_2, S_1, S_2)$ which satisfies the following.
\begin{enumerate} [\indent \rm (1)]
\item Each $S_i$ is symmetric,
i.e., the sequence obtained from $S_i$ by reversing the order is
equal to $S_i$. (Here, $S_1$ is empty if $k=1$.)
\item Each $S_i$ occurs only twice in
the cyclic sequence $CS(r)$.
\item $S_1$ begins and ends with $m+1$.
\item $S_2$ begins and ends with $m$.
\end{enumerate}
\end{proposition}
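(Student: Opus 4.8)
The plan is to argue by induction on the length $k$ of the continued fraction expansion $r=[m_1,\dots,m_k]$, using Proposition~\ref{induction1} to pass from $r$ to the shorter parameter $r'$ and Proposition~\ref{properties} to rebuild $S(r)$ from $T(r)$. For the base case $k=1$ we have $r=1/m$ and $S(r)=(m,m)$ by Proposition~\ref{properties}(1), so I take $S_1$ empty and $S_2=(m)$; conditions (1),(3),(4) hold trivially and the two copies of $(m)$ are its only occurrences in $CS(r)$, giving (2). Throughout I write $m=m_1$ and let $m'$ be the first partial quotient of $r'$; by Proposition~\ref{properties} every term of $S(r')$ lies in $\{m',m'+1\}$, with $m'+1$ the maximal value. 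Although the palindromic symmetry ultimately reflects the relation $\epsilon_{p-i}=\pm\epsilon_i$ for the exponents of Lemma~\ref{presentation}, it is cleaner to obtain the whole statement inductively.

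For the inductive step I assume the decomposition $S(r')=(S_1',S_2',S_1',S_2')$ with all four properties. Since each $S_i'$ is a palindrome, reversal fixes it, so Proposition~\ref{induction1} produces in both cases a four-block palindromic splitting $T(r)=(A,B,A,B)$, with $(A,B)=(S_1',S_2')$ when $m_2=1$ and $(A,B)=(S_2',S_1')$ when $m_2\ge 2$ (using $\overleftarrow{S}(r')=(S_2',S_1',S_2',S_1')$). I then apply the substitution $\Phi$ of Proposition~\ref{properties}, which replaces a term $t$ of $T(r)$ by $(t\langle m+1\rangle,m)$ when $m_2=1$ and by $(m+1,t\langle m\rangle)$ when $m_2\ge 2$; thus $S(r)=\Phi(A)\Phi(B)\Phi(A)\Phi(B)$. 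I define $S_1,S_2$ by shifting a single separator letter across the junction of $\Phi(A)$ and $\Phi(B)$ (the trailing $m$ when $m_2=1$, the leading $m+1$ when $m_2\ge 2$). One then checks directly that both resulting blocks are palindromes: the block whose two ends are runs of the non-separator letter is the image of $S_1'$, and is letter-palindromic exactly because $S_1'$ is a palindrome of run-lengths, while the other block is the image of a palindrome bordered by separators; conditions (3) and (4) are immediate from the substitution pattern. The degenerate case $k'=1$ (where $S_1'$ is empty, so the junction-shift is not available) is dispatched by the explicit computation of $\Phi((m',m'))$.

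The hard part will be condition (2), that each $S_i$ occurs \emph{exactly} twice in $CS(r)$, since the block form only makes "at least twice" automatic. Here I use that, by Proposition~\ref{properties}(2), when $m_2=1$ (resp. $m_2\ge 2$) the letter $m$ (resp. $m+1$) is never repeated, so it is an isolated separator and $CS(r)$ is precisely the run-length encoding of $CT(r)=CS(r')$. Decoding therefore matches every occurrence of $S_1$ or $S_2$ in $CS(r)$ with an occurrence of $A$ or $B$ in $CS(r')$, which by the inductive hypothesis number exactly two. The single danger is a spurious occurrence that begins partway inside an over-long run of the non-separator letter, and this is exactly where I invoke that $S_1'$ begins and ends with $m'+1$: the boundary run of the relevant block then has length $m'+1$, the maximal run-length available in $CS(r')$, so such a run cannot be entered in its interior and the occurrence is forced to align with full runs, i.e. with a genuine occurrence of $A$ or $B$. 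This rigidity, converting the weak boundary inequality into an equality, is what upgrades "at least twice" to "exactly twice" and is the crux of the proof.
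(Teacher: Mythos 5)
Your proposal is correct and follows essentially the same route as the paper: induction on $k$ via Proposition~\ref{induction1}, rebuilding $S(r)$ from $T(r)=(T_1,T_2,T_1,T_2)$ by the substitution of Proposition~\ref{properties} and shifting one separator letter across the block junction, with the ``exactly twice'' claim secured by the isolated-separator and maximal-run-length observations. The paper merely organizes the same argument into four explicit cases ($m_2=1$ or $m_2\ge 2$, degenerate or not) with the resulting $S_1,S_2$ written out termwise.
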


\begin{corollary}
\label{induction2}
$CS(r)$ is symmetric,
i.e., the cyclic sequence obtained from $CS(r)$
by reversing its cyclic order is equivalent to $CS(r)$
(as a cyclic sequence).
In particular,
in Proposition ~\ref{induction1}, we actually have
\[
CT(r)=CS(r').
\]
\end{corollary}

\begin{example}
{\rm (1) Let $r={10}/{37}=[3,1,2,3]$.
Recall from Example ~\ref{cyclic_sequence} that
\[
S(r) =(4,4,4, 3, 4, 4, 3, 4, 4, 3, 4,4,4,3,4, 4, 3, 4, 4, 3).
\]
Putting $S_1=(4,4,4)$ and $S_2=(3, 4, 4, 3, 4, 4, 3)$, we have
\[
S(r)= (S_1, S_2, S_1, S_2),
\]
where $S_1$ and $S_2$ satisfy all the assertions in Proposition ~\ref{sequence}.

(2) Let $r={8}/{35}=[4,2,1,2]$. Recall also from
Example ~\ref{cyclic_sequence} that
\[
S(r) = (5, 4, 5, 4, 4, 5, 4, 4, 5, 4, 5, 4, 4, 5, 4, 4).
\]
Putting $S_1=(5, 4, 5)$ and $S_2=(4, 4, 5, 4, 4)$, we also have
\[
S(r) = (S_1, S_2, S_1, S_2),
\]
where $S_1$ and $S_2$ satisfy all the assertions in Proposition ~\ref{sequence}.
}
\end{example}

The remainder of this section is devoted to the proof of the propositions.
We first prepare a few symbols.
For a real number $t$,
let $\lfloor t \rfloor$ be the greatest integer not exceeding $t$,
$\lfloor t \rfloor_*$ the greatest integer smaller than $t$,
and $\lceil t \rceil^*$ be the smallest integer greater than $t$.
Then, $\lfloor t\rfloor_*=\lfloor t \rfloor < \lceil t \rceil^*$ for a non-integral real number $t$,
whereas $n-1=\lfloor n \rfloor_*<\lfloor n \rfloor < \lceil n \rceil^*=n+1$ for an integer $n$.
We also note that
$\lfloor t+n \rfloor_* =\lfloor t \rfloor_*+n$ and $\lceil t+n \rceil^* =\lceil t \rceil^*+n$
for every $t\in\RR$ and $n\in\ZZ$.
By using this symbol, we have the following formula for the relator
$u_r$ in the group presentation of $G(K(r))$ given in Section ~\ref{group_presentation}.

\begin{lemma}
\label{u-word}
For the positive rational number $r=q/p$,
the
word $u_r$ is given by the following formula:
\[
u_r=
a^{\varepsilon_1} b^{\varepsilon_2} \cdots a^{\varepsilon_{2p-1}}
b^{\varepsilon_{2p}},
\]
where $\varepsilon_i=(-1)^{\lceil (i-1)q/p \rceil^*-1}$.
In particular, $u_r$ is alternating and cyclically reduced.
\end{lemma}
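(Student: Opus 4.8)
The plan is to derive the stated formula directly from Riley's formula (Lemma~\ref{presentation}) by matching, letter by letter, the two cases there against the single closed expression $u_r=a^{\varepsilon_1}b^{\varepsilon_2}\cdots a^{\varepsilon_{2p-1}}b^{\varepsilon_{2p}}$. The point of the $\lceil\,\cdot\,\rceil^*$ symbol is that it records the sign uniformly across integer and non-integer heights: for $0<j<p$ the product $jq/p$ is never an integer because $\gcd(p,q)=1$, so there $\lceil jq/p\rceil^*-1=\lfloor jq/p\rfloor$ and the exponent reduces to Riley's $\epsilon_j=(-1)^{\lfloor jq/p\rfloor}$; only at the two ``lattice heights'' $j=0$ and $j=p$ does the starred ceiling differ from the ordinary floor, producing the value $(-1)^q$ needed for the central letter. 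Geometrically this is exactly the prescription of Remark~\ref{epsilon}: $\varepsilon_i$ records, modulo $2$, the number of horizontal lattice lines lying below the point where the slightly shifted segment of slope $q/p$ crosses the vertical line $x=i-1$.

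Concretely, I would split the index range $1\le i\le 2p$ into three blocks according to Riley's decomposition $u_{q/p}=a\,\hat u_{q/p}\,z\,\hat u_{q/p}^{-1}$, where $z=b^{(-1)^q}$ if $p$ is odd and $z=a^{-1}$ if $p$ is even. For $i=1$ one has $\lceil 0\rceil^*-1=0$, giving the leading $a$. For $2\le i\le p$, writing $i=1+j$ with $1\le j\le p-1$, the non-integrality of $jq/p$ gives $\varepsilon_{1+j}=\epsilon_j$, and a parity check shows the generator at position $i$ is precisely the $j$-th letter of $\hat u_{q/p}$. For the central index $i=p+1$ one has $(i-1)q/p=q\in\ZZ$, so $\lceil q\rceil^*-1=q$ and $\varepsilon_{p+1}=(-1)^q$; when $p$ is even, $\gcd(p,q)=1$ forces $q$ odd, so $(-1)^q=-1$, matching the $a^{-1}$ of Riley's even case, while the generator parity ($a$ iff $p$ is even) also agrees.

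The only block requiring real bookkeeping is $\hat u_{q/p}^{-1}$, i.e.\ $p+2\le i\le 2p$; here I would set $i=p+1+l$ with $1\le l\le p-1$ and use two identities: the shift rule $\lceil q+lq/p\rceil^*=q+\lceil lq/p\rceil^*$ and the reflection identity $\lfloor(p-l)q/p\rfloor=q-1-\lfloor lq/p\rfloor$ (again valid because $lq/p\notin\ZZ$). Together they give $\varepsilon_{p+1+l}=(-1)^q\epsilon_l$, while the $l$-th letter of $\hat u_{q/p}^{-1}$ carries the sign $-\epsilon_{p-l}=-(-1)^{q+1}\epsilon_l=(-1)^q\epsilon_l$; a parallel parity computation confirms the generators coincide. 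This is the step I expect to be the main obstacle, since it simultaneously involves the order-reversal $l\mapsto p-l$, the inversion of exponents, and the $(-1)^q$ twist coming from traversing the central lattice height, and one must also keep the $p$-even and $p$-odd parities straight (with $\hat u_{q/p}$ and its inverse empty in the degenerate case $p=1$).

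Finally, the two concluding assertions are immediate from the established form: the word $a^{\varepsilon_1}b^{\varepsilon_2}\cdots a^{\varepsilon_{2p-1}}b^{\varepsilon_{2p}}$ strictly alternates the two distinct generators $a$ and $b$, so it contains neither $a^{\pm2}$ nor $b^{\pm2}$ and is therefore alternating; and since its first letter is a power of $a$ while its last is a power of $b$, no reduction can occur either internally or cyclically, whence $u_r$ is cyclically reduced.
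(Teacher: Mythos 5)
Your proposal is correct, but it takes a genuinely different route from the paper. You treat Riley's two-case formula (Lemma~\ref{presentation}) as the starting point and verify the closed expression $\varepsilon_i=(-1)^{\lceil (i-1)q/p\rceil^*-1}$ block by block: the leading $a$, the block $2\le i\le p$ where non-integrality of $jq/p$ gives $\lceil jq/p\rceil^*-1=\lfloor jq/p\rfloor$, the central letter at $i=p+1$ where the starred ceiling produces $(-1)^q$, and the block $p+2\le i\le 2p$ handled by the shift rule together with the reflection identity $\lfloor (p-l)q/p\rfloor=q-1-\lfloor lq/p\rfloor$; all of these computations check out, including the generator parities and the degenerate case $p=1$. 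The paper instead proves the lemma directly from the geometric definition of $u_r$: it reads the word off the intersections $P_i^+$ of the shifted line $L^+(r)$ with the vertical lattice lines, observing that the $y$-coordinate $iq/p+\eta$ lies in $\bigl(\lceil iq/p\rceil^*-1,\lceil iq/p\rceil^*\bigr)$, which determines the sign. The trade-off is this: the paper's argument is independent of Lemma~\ref{presentation} (which is only quoted from Riley, not proved), and it simultaneously sets up the points $P_i^+$ and the horizontal-strip counting that are reused immediately afterwards in Lemma~\ref{j-term} and in the extension of $S(r)$ to $r>1$; your argument is purely combinatorial, makes the consistency recorded in Remark~\ref{two-epsiolons} fully explicit, but inherits its validity from the imported Riley formula and does not produce the geometric apparatus the paper needs downstream. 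Both proofs are complete and correct.
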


To prove Lemma ~\ref{u-word},
let $L(r)$ be the line in $\RR^2$ of slope $r$
passing through the origin, and let $L^+(r)$ be the line
obtained by translating $L(r)$ by the vector $(0,\eta)$
for sufficiently small positive real number $\eta$.
Then $L^+(r)$ lies in $\RR^2-\ZZ^2$ and projects to the simple loop
$\alpha_r$. Pick a base point, $z$, from the intersection of $L^+(r)$
with the second quadrant, and consider the sub-line-segment of $L^+(r)$
bounded by $z$ and $z+(2p,2q)$.
Then it forms a fundamental domain of the covering
$L^+(r)\to \alpha_r$, and the word $u_r$ is obtained by reading the
intersection of the line-segment with the vertical lattice lines.
To be precise, for each integer $0\le i\le 2p-1$,
let $P_i^+$ be the intersection of the line-segment with the
vertical lattice line $x=i$.
We define the {\it letter} at $P_i^+$ to be $a$ or $b$
according as $P_i^+$ lies on a vertical edge
with a single arrow
or double arrow in Figure ~\ref{line},
namely according as $i$ is even or odd.
We define the {\it sign} of $P_i^+$ to be $+1$ or $-1$
according as the corresponding arrow is upward or downward.
Then the letter and the sign of $P_i^+$, respectively,
give the letter and the exponent of
the $(i+1)$-th term of the word $u_r$
for each $0\le i\le 2p-1$.
To describe the sign of $P_i^+$,
note that the $y$-coordinate of $P_i^+$
is equal to $iq/p+\eta$, where $\eta$ is a sufficiently small
positive real.
Thus it is contained in the open interval
$(\lceil iq/p \rceil^*-1, \lceil iq/p \rceil^*)$.
Thus the corresponding arrow is
upward or downward
according as $\lceil iq/p \rceil^*-1$ is even or odd.
Hence the sign of $P_i^+$
is equal to $(-1)^{\lceil iq/p \rceil^*-1}$.
This means that the exponent, $\varepsilon_i$, of the $i$-th term of $u_r$ is
$(-1)^{\lceil (i-1)q/p \rceil^*-1}$.
Thus we obtain the first assertion of Lemma ~\ref{u-word}.
The second assertion is a direct consequence of the first assertion.

\begin{remark}
\label{two-epsiolons}
{\rm
For $1\le i\le p-1$,
the sign $\epsilon_i = (-1)^{\lfloor iq/p \rfloor}$ in
Lemma ~\ref{presentation} is of course equal to
the sign $\varepsilon_{i+1}=(-1)^{\lceil iq/p \rceil^*-1}$
in Lemma ~\ref{u-word}
}
\end{remark}

\begin{lemma}
\label{j-term}
If $0< r \le 1$, then
the sequence $S(r)$ has length $2q$, and its
$j$-th term $s_j(r)$ is given by the following formula
($1\le j\le 2q$):
\begin{align*}
s_j(r)
&=
\#\{i\in \{0,1,\dots,2q-1\}\ | \
P_i^+\in \RR\times (j-1,j)\}\\
&=
\#\{i\in \{0,1,\dots,2q-1\}\ | \
\lceil iq/p \rceil^*=j\}\\
&=
\lfloor jp/q \rfloor_*- \lfloor (j-1)p/q \rfloor_*,
\end{align*}
where $\#$ denotes the number of elements of the set.
\end{lemma}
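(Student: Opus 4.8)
The plan is to read the sequence $S(r)$ directly off the geometric picture of $u_r$ set up in the proof of Lemma ~\ref{u-word}, using the hypothesis $0<r\le 1$ to control how the crossing points $P_i^+$ are distributed among the horizontal strips $\RR\times(j-1,j)$. First I would dispose of the second equality, which is essentially a restatement of what was already established for Lemma ~\ref{u-word}: there the $y$-coordinate of $P_i^+$ was shown to be $iq/p+\eta$ and to lie in the open interval $(\lceil iq/p\rceil^*-1,\lceil iq/p\rceil^*)$. Hence $P_i^+\in\RR\times(j-1,j)$ if and only if $\lceil iq/p\rceil^*=j$. Moreover the exponent of the letter attached to $P_i^+$, namely $(-1)^{\lceil iq/p\rceil^*-1}$, then equals $(-1)^{j-1}$, so it depends only on the strip containing $P_i^+$: points in a common strip carry a common sign, and points in adjacent strips carry opposite signs.

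The core step is to match the factors of the $S$-sequence with these strips. As $i$ runs through the indices $0,1,\dots,2p-1$ of the $2p$ letters of $u_r$, the height $iq/p+\eta$ increases strictly, and by exactly $q/p\le 1$ at each step precisely because $0<r\le 1$. Consequently the strip index $\lceil iq/p\rceil^*$ is non-decreasing and jumps by $0$ or $1$ at each step, so no strip is skipped and the indices lying in a fixed strip form a single run of consecutive integers. Combining this with the sign observation above, the maximal constant-sign runs of $u_r$, i.e.\ the factors $v_1,\dots,v_t$ that define $S(r)$, are exactly the groups of indices falling in a common strip. Since $P_0^+$ sits in the strip $j=1$ while $P_{2p-1}^+$ sits in the strip $j=2q$ (because $(2p-1)q/p+\eta=2q-q/p+\eta\in(2q-1,2q)$), and no intermediate strip is skipped, there are exactly $2q$ such groups. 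This simultaneously yields that $S(r)$ has length $2q$ and that its $j$-th term $s_j(r)$ equals the number of crossing points lying in $\RR\times(j-1,j)$, which is the first equality.

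It then remains to evaluate this count. The condition $\lceil iq/p\rceil^*=j$ is equivalent to $(j-1)p/q\le i<jp/q$, so $s_j(r)$ is the number of integers in the half-open interval $[(j-1)p/q,\,jp/q)$. Using that the number of non-negative integers below a positive real $x$ equals $\lfloor x\rfloor_*+1$, this count is $\lfloor jp/q\rfloor_*-\lfloor (j-1)p/q\rfloor_*$; and since $jp/q\le 2p$ for every $j\le 2q$, each such integer $i$ automatically lies among $\{0,1,\dots,2p-1\}$, so nothing is lost in restricting the index range. This gives the third equality and completes the proof.

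I expect the second paragraph to be the only real obstacle: the clean identification of the $S$-sequence factors with the strips, together with the length-$2q$ conclusion, rests entirely on the fact that the vertical increment $q/p$ between successive crossing heights is at most $1$, which is exactly the hypothesis $0<r\le 1$. For $r>1$ a single step could skip a strip, destroying both the factor-strip correspondence and the non-emptiness of every intermediate strip; this is precisely why the definitions of $S(r)$ and $CS(r)$ have to be extended separately in that range and why the present formula is asserted only for $0<r\le 1$.
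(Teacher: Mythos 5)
Your proof is correct and follows essentially the same route as the paper's: both identify the terms of $S(r)$ with the horizontal strips containing the crossing points $P_i^+$, use the hypothesis $0<r\le 1$ to ensure every strip from $1$ to $2q$ is nonempty, and then count the lattice indices in each strip via $\lfloor\cdot\rfloor_*$. Your second paragraph simply makes explicit the ``strip index is non-decreasing and jumps by at most one'' argument that the paper leaves implicit in its appeal to the geometric description of $u_r$.
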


\begin{proof}
Suppose $0< r\le 1$.
Then, for each integer $j$ with $1\le j\le 2q$,
the horizontal strip $\RR\times (j-1,j)$
contains some $P_i^+$, namely,
the right hand side of the first identity is a positive integer.
By this fact and
by the above geometric description of $u_r$
and the definition of $S(r)=S(u_r)$,
we see that $S(r)$ has length $2q$ and that
$s_j(r)$ is equal to the number of the points $P_i^+$'s
which are contained in the horizontal strip $\RR\times (j-1,j)$.
So we obtain the first identity.
As noted in the preceding
argument, the condition
$P_i^+\in \RR\times (j-1,j)$ is equivalent to the condition
$j-1<iq/p+\eta<j$,
where $\eta$ is a sufficiently small
positive real.
This condition is equivalent to the condition that $\lceil iq/p \rceil^*=j$.
Thus we obtain the second identity of the lemma.
To show the last identity, note that the
above
condition is
equivalent to the condition
\[
(j-1)p/q-\eta' < i < jp/q-\eta' \quad
\mbox{for a sufficiently small
positive real $\eta'$.}
\]
This in turn is equivalent to the following condition:
\[
\lfloor (j-1)p/q \rfloor_*< i \le \lfloor jp/q \rfloor_*.
\]
Hence we have
$s_j(r)= \lfloor jp/q \rfloor_*- \lfloor (j-1)p/q \rfloor_*$,
completing the proof of Lemma ~\ref{j-term}.
\end{proof}

The above argument
also
shows that
the three numbers on the right hand side of the identity
in the above lemma are equal even if $r>1$.
Thus the following definition makes sense.

\begin{definition}
\label{def4.1(3)_for_r>1}
{\rm
We extend the definition of
$S(r)$, $CS(r)$, $T(r)$ and $CT(r)$
to an arbitrary positive rational number $r$
by the formula in the above definition.
Namely, for a positive rational number $r=q/p$,
the {\it $S$-sequence of slope $r$}, $S(r)$,
is defined by
\[
S(r)=(s_1(r),s_2(r),\dots, s_{2q}(r)),
\]
where
\begin{align*}
s_j(r)
&=
\#\{i\in \{0,1,\dots,2q-1\}\ | \
P_i^+\in \RR\times (j-1,j)\}\\
&=
\#\{i\in \{0,1,\dots,2q-1\}\ | \
\lceil iq/p \rceil^*=j\}\\
&=
\lfloor jp/q \rfloor_*- \lfloor (j-1)p/q \rfloor_*.
\end{align*}
The {\it cyclic $S$-sequence}, $CS(r)$,
the {\it $T$-sequence}, $T(r)$, and
the {\it cyclic $T$-sequence}, $CT(r)$, of slope $r$
are defined from the above $S(r)$ as in Definitions ~\ref{def4.1(3)}
and \ref{def_T(r)}.
}
\end{definition}

\begin{remark}
\label{remark:S(r)=S(u_r)}
{\rm
Though the word $u_r$ for $r>1$ is already defined
and given by Lemma ~\ref{u-word},
the sequence $S(u_r)$ is different from the sequence $S(r)$.
In fact, $S(u_r)$ consists of only positive integers,
whereas $S(r)$ may contain $0$.
}
\end{remark}

\begin{proof}[Proof of Proposition ~\ref{properties0}]
By Lemma ~\ref{j-term} and Definition ~\ref{def4.1(3)_for_r>1},
$S(r)$ has length $2q$.
Since $u_r$ begins with $a$ and ends with $b^{-1}$ (see Lemma ~\ref{u-word}),
it follows that the sequence $S(r)$ represents the cyclic sequence $CS(r)$
(cf. Proposition ~\ref{meaning_of_S-sequence}(2)).
The symmetry $s_{q+j}(r) =s_j(r)$ is proved as follows:
\begin{align*}
s_{q+j}(r) &= \lfloor (q+j)p/q \rfloor_* - \lfloor (q+j-1)p/q \rfloor_*\\
&=\lfloor p+(jp/q) \rfloor_* - \lfloor p+(j-1)p/q \rfloor_*\\
&=\lfloor jp/q \rfloor_*- \lfloor (j-1)p/q \rfloor_*\\
&=s_j(r).
\end{align*}
We note that the symmetry also follows from the fact that
the translation of $\RR^2$ by the vector $(p,q)$ preserves the
line $L^+(r)$ and maps the horizontal strip bounded by lattice lines
to another such strip.
\end{proof}

For the positive rational number
$r=q/p=[m_1,m_2, \dots,m_k]$,
let $c$ be the non-negative integer defined by
\[
p=m_1q+c.
\]
If $k=1$ then $c=0$, and if $k\ge 2$ then
$0<c<q$.

\begin{lemma}
\label{continued-fraction}
We have the following continued fraction expansions:
\begin{align*}
q/c &=[0,m_2,m_3,\dots,m_k],\\
c/q &=[m_2,m_3,\dots,m_k],\\
(q-c)/c &=[m_3,\dots,m_k] &\text{if $m_2=1$,}\qquad\qquad\qquad\\
c/(q-c) &=[m_2-1,m_3,\dots,m_k],
\\
q/(q-c) &=[0,1,m_2-1,m_3,\dots,m_k].
\end{align*}
\end{lemma}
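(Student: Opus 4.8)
The plan is to reduce all five identities to the single defining relation of the continued fraction together with the integer division $p=m_1q+c$. First I would unwind the definition of $[m_1,m_2,\dots,m_k]$: writing $\beta:=[m_2,m_3,\dots,m_k]$, the nested fraction gives directly $q/p=1/(m_1+\beta)$, hence $p/q=m_1+\beta$. On the other hand, dividing the relation $p=m_1q+c$ by $q$ yields $p/q=m_1+c/q$. Comparing the two expressions for $p/q$ forces $\beta=c/q$, that is,
\[
c/q=[m_2,m_3,\dots,m_k],
\]
which is the second identity; taking the reciprocal (equivalently, prepending a $0$, since $[0,m_2,\dots,m_k]=1/[m_2,\dots,m_k]$) gives $q/c=[0,m_2,\dots,m_k]$, the first identity. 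The hypothesis $0<c<q$ (valid when $k\ge2$) guarantees $c/q\in(0,1)$, so this is a genuine expansion in the normalized form used throughout the paper.

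Next I would derive the remaining identities by the same elementary reciprocal-and-subtraction moves. Taking the reciprocal of the second identity gives $q/c=m_2+[m_3,\dots,m_k]$. Subtracting $1$ from both sides, and using $q/c-1=(q-c)/c$, yields
\[
(q-c)/c=(m_2-1)+[m_3,\dots,m_k],
\]
whose reciprocal is $c/(q-c)=[m_2-1,m_3,\dots,m_k]$, the fourth identity. When $m_2=1$ the leading term vanishes, so $(q-c)/c=[m_3,\dots,m_k]$, which is exactly the third identity. Finally, for the fifth identity I would assemble $[0,1,m_2-1,m_3,\dots,m_k]$ from the fourth: prepending a $1$ to $[m_2-1,m_3,\dots,m_k]=c/(q-c)$ gives $[1,m_2-1,\dots]=1/(1+c/(q-c))=(q-c)/q$, and prepending a further $0$ inverts this to $q/(q-c)$.

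The computation is entirely formal, so I do not expect a genuine obstacle; the only points requiring care are the bookkeeping at the two ends of the sequence. I would verify the degenerate case $k=2$, where $[m_3,\dots,m_k]$ is the empty tail (value $0$), so that $q/c=m_2$ and the third and fourth identities still read correctly. I would also confirm that each prepending or subtraction step preserves the normalization conventions on the $m_i$: in particular $m_2\ge1$ keeps $m_2-1\ge0$ (so a leading $0$ appears exactly when $m_2=1$), and the terminal constraint $m_k\ge2$ is inherited by every tail of length $\ge2$. These checks confirm that all five expressions are legitimate continued fractions in the paper's normal form, and not merely formal rearrangements.
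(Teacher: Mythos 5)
Your proposal is correct and follows essentially the same route as the paper: derive $c/q=[m_2,\dots,m_k]$ by comparing $p/q=m_1+[m_2,\dots,m_k]$ with $p=m_1q+c$, then obtain the remaining identities by the same reciprocal/subtract-one/prepend manipulations (the paper likewise gets the fifth identity via $q/(q-c)=1+c/(q-c)$ and then $(q-c)/q=[1,m_2-1,m_3,\dots,m_k]$). The additional checks you note on the degenerate tail and the normalization of the $m_i$ are consistent with the paper's conventions and do not change the argument.
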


\begin{proof}
Since $q/p=[m_1,m_2,\dots,m_k]$, we have
$p/q=m_1+[m_2,\dots,m_k]$.
So,
$c/q=(p-m_1q)/q=[m_2,\dots,m_k]$
and hence
$q/c=[0,m_2,m_3,\dots,m_k]$.
Since $q/c=m_2+[m_3,\dots,m_k]$,
we have
$(q-c)/c=(m_2-1)+[m_3,\dots,m_k]$.
So, if $m_2=1$, we have $(q-c)/c=[m_3,\dots,m_k]$.
It also implies that
$c/(q-c) =[m_2-1,m_3,\dots,m_k]$.
Thus
$q/(q-c)=1+c/(q-c)=1+[m_2-1,m_3,\dots,m_k]$,
and hence
$(q-c)/q=[1,m_2-1,m_3,\dots,m_k]$
and
$q/(q-c)=[0,1,m_2-1,m_3,\dots,m_k]$.
\end{proof}

\begin{lemma}
\label{shifting S-formula}
Assume $k\ge 2$ and put $m=m_1$. Then
$S(q/p)=S(q/c)+(m,\dots, m)$.
\end{lemma}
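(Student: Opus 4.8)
The plan is to compare $S(q/p)$ and $S(q/c)$ term by term using the closed-form floor-difference expression for the $j$-th term recorded in Definition~\ref{def4.1(3)_for_r>1}. Since that formula extends the definition of $S(r)$ to \emph{every} positive rational, it applies equally to $r=q/p$ and to $r'=q/c$, and this uniformity is exactly what makes a direct termwise comparison feasible.

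First I would check that the two sequences have the same length, so that the asserted equality $S(q/p)=S(q/c)+(m,\dots,m)$ even makes sense. By definition $c=p-m_1q=p-mq$, so $\gcd(q,c)=\gcd(q,p)=1$; hence $q/c$ is in lowest terms and, by Proposition~\ref{properties0} (equivalently, directly from Definition~\ref{def4.1(3)_for_r>1}), both $S(q/p)$ and $S(q/c)$ have length $2q$. The vector $(m,\dots,m)$ is then understood to have $2q$ entries and the sum is taken componentwise. Note also that $0<c<q$, so $q/c>1$ and $S(q/c)$ may contain zeros; this is harmless and in fact consistent with the claim, since a zero entry of $S(q/c)$ then corresponds to an entry $m$ of $S(q/p)$, matching Proposition~\ref{properties}.

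The core computation is a one-line manipulation. Writing
\[
s_j(q/p)=\lfloor jp/q\rfloor_*-\lfloor(j-1)p/q\rfloor_*
\]
and substituting $p=mq+c$ gives $jp/q=jm+jc/q$ with $jm\in\ZZ$. Using the additivity $\lfloor t+n\rfloor_*=\lfloor t\rfloor_*+n$ ($n\in\ZZ$) recorded just before Lemma~\ref{u-word}, I would pull the integer $jm$ out of each floor, obtaining $\lfloor jp/q\rfloor_*=jm+\lfloor jc/q\rfloor_*$ and likewise $\lfloor(j-1)p/q\rfloor_*=(j-1)m+\lfloor(j-1)c/q\rfloor_*$. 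Subtracting, the terms $jm$ and $(j-1)m$ contribute $m$, and the remaining floors are precisely $s_j(q/c)$:
\[
s_j(q/p)=m+\bigl(\lfloor jc/q\rfloor_*-\lfloor(j-1)c/q\rfloor_*\bigr)=m+s_j(q/c).
\]
Since this holds for every $1\le j\le 2q$, the asserted identity follows.

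There is no genuinely hard step here; the argument reduces to the algebraic identity above. The only points requiring care are bookkeeping: confirming that $q/c$ is reduced so that the length-$2q$ statement applies to it, and invoking the floor-difference formula of Definition~\ref{def4.1(3)_for_r>1} for $q/c$ rather than any word-based count (the distinction flagged in Remark~\ref{remark:S(r)=S(u_r)} for $r>1$ is irrelevant here, as we never use $S(u_r)$, only the quantities $s_j$ themselves).
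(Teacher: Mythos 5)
Your proof is correct and follows essentially the same route as the paper's: both substitute $p=mq+c$, use the additivity $\lfloor t+n\rfloor_*=\lfloor t\rfloor_*+n$ to extract the integer multiple $jm$, and conclude $s_j(q/p)=m+s_j(q/c)$ termwise via Definition~\ref{def4.1(3)_for_r>1}. The extra bookkeeping you include (that $\gcd(q,c)=1$, that both sequences have length $2q$, and that the zeros of $S(q/c)$ are harmless) matches the paper's implicit use of Lemma~\ref{j-term} and Definition~\ref{def4.1(3)_for_r>1}.
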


\begin{proof}
Note that $q/c>1$.
By Lemma ~\ref{j-term} and
Definition ~\ref{def4.1(3)_for_r>1},
both $S(q/p)$ and $S(q/c)$ have length $2q$.
Moreover, their components are related as follows:
\begin{align*}
s_j(r)
&=
\lfloor jp/q \rfloor_*- \lfloor (j-1)p/q \rfloor_*\\
&=
\lfloor j(mq+c)/q \rfloor_*- \lfloor (j-1)(mq+c)/q \rfloor_*\\
&=
(jm+ \lfloor jc/q \rfloor_*)-((j-1)m+ \lfloor (j-1)c/q \rfloor_*)\\
&=
m+ \lfloor jc/q \rfloor_*- \lfloor (j-1)c/q \rfloor_*\\
&=
m+s_j(q/c).
\end{align*}
This completes the proof of Lemma ~\ref{shifting S-formula}.
\end{proof}

\begin{lemma}\label{properties01}
Suppose $k \ge 2$.
Then, for the rational number
\[q/c=[0,m_2,m_3,\dots,m_k],\]
the conclusion of Proposition ~\ref{properties} holds.
Namely, each term of $S(q/c)$ is either $0$ or $1$,
and $S(q/c)$ begins with $1$ and ends with $0$.
Moreover, if $m_2=1$, no two consecutive terms of $S(q/c)$ can be $(0, 0)$,
whereas if $m_2 \ge 2$, no two consecutive terms of $S(q/c)$ can be $(1, 1)$.
\end{lemma}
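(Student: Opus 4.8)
The plan is to work directly from the arithmetic formula for the terms of $S(q/c)$, never invoking Proposition~\ref{properties} itself (which would be circular, since the present lemma is precisely the base case $m_1=0$ that feeds, via Lemma~\ref{shifting S-formula}, into the proof of that proposition). Write $\beta:=c/q$, so that by Lemma~\ref{j-term} and Definition~\ref{def4.1(3)_for_r>1} the $j$-th term is $s_j(q/c)=\lfloor j\beta\rfloor_*-\lfloor(j-1)\beta\rfloor_*$ for $1\le j\le 2q$. Since $0<c<q$ we have $0<\beta<1$, and from Lemma~\ref{continued-fraction} we have $c/q=[m_2,\dots,m_k]$, hence $q/c=m_2+[m_3,\dots,m_k]\in[m_2,m_2+1)$; this gives $\beta\in(1/(m_2+1),\,1/m_2]$, so in particular $\beta>1/2$ when $m_2=1$ and $\beta\le 1/2$ when $m_2\ge2$. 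The one elementary fact I would isolate first is the counting identity $\lfloor y\rfloor_*-\lfloor x\rfloor_*=\#\{n\in\ZZ : x\le n<y\}$ for $x\le y$, which holds because the integers strictly below a real $t$ are exactly those $\le\lfloor t\rfloor_*$. Consequently $s_j(q/c)$ equals the number of integers in the half-open interval $[(j-1)\beta,\,j\beta)$, and $s_j(q/c)+s_{j+1}(q/c)$ equals the number of integers in $[(j-1)\beta,\,(j+1)\beta)$.

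With this reformulation the first two assertions become immediate length computations. The interval $[(j-1)\beta,j\beta)$ has length $\beta<1$, so it contains $0$ or $1$ integers; thus every $s_j(q/c)\in\{0,1\}$. For the initial term, $[0,\beta)$ contains exactly the integer $0$ (as $0<\beta<1$), so $s_1(q/c)=1$. For the terminal term, $2q\beta=2c\in\ZZ$ and $(2q-1)\beta=2c-\beta$, so $[2c-\beta,2c)$ is a half-open interval of length $\beta<1$ whose only candidate integer $2c$ is excluded; hence it contains no integer and $s_{2q}(q/c)=0$.

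For the consecutive-term conditions I would examine the two-step interval $[(j-1)\beta,(j+1)\beta)$, of length $2\beta$, for $1\le j\le 2q-1$. If $m_2=1$ then $\beta>1/2$, so this interval has length $2\beta>1$ and therefore contains at least one integer; hence $s_j(q/c)+s_{j+1}(q/c)\ge1$, ruling out two consecutive terms equal to $(0,0)$. If $m_2\ge2$ then $\beta\le1/2$, so the interval has length $2\beta\le1$; a half-open interval of length at most $1$ contains at most one integer, so $s_j(q/c)+s_{j+1}(q/c)\le1$, ruling out two consecutive terms equal to $(1,1)$. This establishes all four assertions.

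The only genuinely delicate point, and the step I would treat most carefully, is the correct bookkeeping for the modified floor $\lfloor\cdot\rfloor_*$, in particular at the indices where $j\beta=jc/q$ is an integer (namely $j=q$ and $j=2q$). Phrasing everything through the counting identity for half-open intervals is precisely what makes these integer-endpoint cases uniform: the left endpoint is included and the right endpoint excluded, so no separate case analysis at $j=q$ is needed, and the length bounds ($\beta<1$, and $2\beta>1$ versus $2\beta\le1$ in the two cases) do all the work. I expect no further obstacles; once Lemma~\ref{properties01} is in hand, Proposition~\ref{properties} follows by adding $m=m_1$ to every term via Lemma~\ref{shifting S-formula}.
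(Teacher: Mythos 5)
Your proof is correct and follows essentially the same route as the paper's: both start from $s_j(q/c)=\lfloor jc/q\rfloor_*-\lfloor(j-1)c/q\rfloor_*$, observe that the single-term difference of arguments is $c/q<1$ and the two-term difference is $2c/q$, and then compare $2c/q$ with $1$ according as $m_2=1$ or $m_2\ge 2$. Your repackaging via counting integers in half-open intervals is just a cleaner bookkeeping of the same computation (and, as a small bonus, it handles the boundary case $q=2c$, i.e.\ $r=[m_1,2]$, where the paper's strict inequality $q>2c$ is off by a hair but its conclusion ``at most $1$'' still holds).
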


\begin{proof}
By Definition ~\ref{def4.1(3)_for_r>1},
\[s_j(q/c)=\lfloor jc/q \rfloor_*- \lfloor (j-1)c/q \rfloor_*.\]
Since $jc/q-(j-1)c/q=c/q$ is a positive real number less than $1$,
$s_j(q/c)$ is $0$ or $1$.
Moreover $s_1(q/c)= \lfloor c/q \rfloor_*- \lfloor 0 \rfloor_*=0-(-1)=1$ and
$s_{2q}(q/c)= \lfloor 2c \rfloor_*- \lfloor (2q-1)c/q \rfloor_*=(2c-1)-(2c-1)=0$.
Thus $S(q/c)$ begins with $1$ and ends with $0$.

Note that if $m_2=1$ then $1<q-c<c$ and hence $q<2c$,
whereas if $m_2\ge 2$ then $0<c<q-c$ and hence $q>2c$.
On the other hand,
Definition ~\ref{def4.1(3)_for_r>1} implies
\[
s_{j+1}(q/c)+s_j(q/c)=
\lfloor (j+1)c/q \rfloor_*- \lfloor (j-1)c/q \rfloor_*.
\]
Since $(j+1)c/q-(j-1)c/q=2c/q$ is greater than $1$ or less than $1$
according as $m_2=1$ or $m_2\ge 2$,
we see that $\lfloor (j+1)c/q \rfloor_*- \lfloor (j-1)c/q \rfloor_*$ is at least $1$ or at most $1$, accordingly.
In the first case, it is impossible for both
$s_{j+1}(q/c)$ and $s_j(q/c)$ to be $0$,
whereas in the second case,
it is impossible for both
$s_{j+1}(q/c)$ and $s_j(q/c)$ to be $1$.
This completes the proof.
\end{proof}

\begin{proof}[Proof of Proposition ~\ref{properties}]
If $k=1$, then $r=1/m$ and the assertion is obvious.
If $k\ge 2$, then the assertion is a direct consequence of Lemmas ~\ref{shifting
S-formula} and \ref{properties01}.
\end{proof}

In order to prove Proposition ~\ref{induction1}, we prepare the following lemma.

\begin{lemma}
\label{properties02}
Suppose $k\ge 2$ and $m_2=1$.
Assume that
$S((q-c)/c)=(t_1,\dots, t_{2(q-c)})$.
Then
\[
S(q/c)=(t_1\langle 1\rangle,0, t_2\langle 1\rangle,0,\dots,
t_{2(q-c)}\langle 1\rangle,0).
\]
In particular, $T(q/c)=S((q-c)/c)$.
\end{lemma}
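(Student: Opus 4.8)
The plan is to read the block structure of $S(q/c)$ directly off the arithmetic of its terms and to match the block lengths with $S((q-c)/c)$ term by term. First I would invoke Lemma~\ref{properties01}: since $k\ge 2$ and $m_2=1$, the sequence $S(q/c)$ consists only of $0$'s and $1$'s, begins with $1$, ends with $0$, and has no two consecutive $0$'s. Hence $S(q/c)$ is forced to have the shape $(t_1\langle 1\rangle,0,t_2\langle 1\rangle,0,\dots,t_s\langle 1\rangle,0)$ with each $t_i\ge 1$, the $i$-th run of $1$'s being closed off by the $i$-th $0$; this is exactly what makes $T(q/c)=(t_1,\dots,t_s)$ well defined, and it reduces the lemma to proving $s=2(q-c)$ and $t_i=s_i((q-c)/c)$ for every $i$. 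The count $s=2(q-c)$ is immediate: by Definition~\ref{def4.1(3)_for_r>1} the terms of $S(q/c)$ telescope to $\sum_{j=1}^{2q}s_j(q/c)=\lfloor 2c\rfloor_*-\lfloor 0\rfloor_*=2c$, so $S(q/c)$ has exactly $2c$ ones, hence $2q-2c=2(q-c)$ zeros, and since each run of $1$'s is closed by precisely one $0$ the number of runs equals the number of zeros.

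The heart of the argument is to locate the zeros exactly. By Definition~\ref{def4.1(3)_for_r>1} and Lemma~\ref{properties01}, the $j$-th term is $1$ precisely when the half-open interval $[(j-1)c/q,\,jc/q)$ contains an integer (geometrically, when the strip $\RR\times(j-1,j)$ meets one of the points $P_i^+$), and the integer $n$ lies in this interval exactly for $j=\lceil nq/c\rceil^*$; thus the positions carrying $1$ are $\lceil nq/c\rceil^*$ for $n=0,1,\dots,2c-1$. I would then show that the positions carrying $0$ are exactly
\[
z_i:=\lfloor iq/(q-c)\rfloor_*+1,\qquad i=1,\dots,2(q-c).
\]
This is a Beatty-type complementarity coming from the identity $c/q+(q-c)/q=1$: the progressions $\{\lceil nq/c\rceil^*\}$ and $\{z_i\}$ partition $\{1,\dots,2q\}$. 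Concretely, writing $w=\lfloor iq/(q-c)\rfloor_*$, position $w+1$ is a $1$ iff some $n$ satisfies $\lfloor nq/c\rfloor=w$, so $z_i$ is a zero iff $w$ is a value skipped by the Beatty sequence $\lfloor nq/c\rfloor$; checking this, together with the facts that the $z_i$ are distinct (as $q/(q-c)>1$), that $z_{2(q-c)}=\lfloor 2q\rfloor_*+1=2q$ is the terminal zero, and that there are exactly $2(q-c)$ zeros in all, pins the $z_i$ down as the complete list of zero positions. This bookkeeping with the modified symbols $\lfloor\cdot\rfloor_*$ and $\lceil\cdot\rceil^*$, which is precisely what makes the rational (non-generic) Beatty partition come out correctly at the integer values, is the step I expect to demand the most care.

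Once the zeros are located, the block lengths follow by a routine telescoping. Setting $z_0:=0$, the $i$-th run of $1$'s occupies the positions strictly between $z_{i-1}$ and $z_i$, so $t_i=z_i-z_{i-1}-1$. Writing $q/(q-c)=1+c/(q-c)$ and using $\lfloor t+n\rfloor_*=\lfloor t\rfloor_*+n$, one gets $\lfloor iq/(q-c)\rfloor_*=i+\lfloor ic/(q-c)\rfloor_*$, whence
\[
t_i=\lfloor iq/(q-c)\rfloor_*-\lfloor (i-1)q/(q-c)\rfloor_*-1=\lfloor ic/(q-c)\rfloor_*-\lfloor (i-1)c/(q-c)\rfloor_*=s_i((q-c)/c).
\]
This is exactly the claimed identity $S(q/c)=(t_1\langle 1\rangle,0,\dots,t_{2(q-c)}\langle 1\rangle,0)$ with $t_i=s_i((q-c)/c)$, and the \emph{in particular} statement $T(q/c)=S((q-c)/c)$ is then immediate from the definition of $T(q/c)$.
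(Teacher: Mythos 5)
Your argument is correct, but it takes a genuinely different route from the paper's. The paper works with the characterization $s_j=\#\{i \mid \lceil ir\rceil^*=j\}$ from Definition~\ref{def4.1(3)_for_r>1}: the hypothesis $S((q-c)/c)=(t_1,\dots,t_{2(q-c)})$ says that $\lceil i(q-c)/c\rceil^*=\ell$ exactly for the $t_\ell$ consecutive integers $i$ with $\sum_{h=1}^{\ell-1}t_h\le i\le\sum_{h=1}^{\ell}t_h-1$, and the single shift identity $\lceil iq/c\rceil^*=\lceil i(q-c)/c\rceil^*+i$ then places the $1$'s of $S(q/c)$ at the consecutive positions $j=\ell+i$, with exactly the positions $j=\ell+\sum_{h=1}^{\ell}t_h$ skipped; the whole lemma falls out of that one substitution, with no counting of zeros and no appeal to Lemma~\ref{properties01} or to the complementary slope $q/(q-c)$. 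You instead fix the qualitative shape via Lemma~\ref{properties01}, count the $1$'s by telescoping, locate the $0$'s by a rational Beatty complementarity between $q/c$ and $q/(q-c)$, and telescope again to recover the block lengths. This works: the complementarity you flag as delicate reduces to showing that for each integer $w$ exactly one of the intervals $[wc/q,(w+1)c/q)$ and $(w(q-c)/q,(w+1)(q-c)/q]$ contains an integer, which follows from $\lceil -t\rceil^*=-\lfloor t\rfloor_*$ and $\lceil t+n\rceil^*=\lceil t\rceil^*+n$; it is close in spirit to the paper's own Lemma~\ref{exchange1}, which proves $s_i(q/(q-c))+s_{q+1-i}(q/c)=1$ by just such a computation. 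The paper's route is shorter; yours costs an extra counting step but yields the explicit formula $z_i=\lfloor iq/(q-c)\rfloor_*+1$ for the positions of the zeros.
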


\begin{proof}
Since the first term $s_1((q-c)/c)$ of $S((q-c)/c)$
is equal to $t_1$, we see, by
Definition ~\ref{def4.1(3)_for_r>1} that
$\lceil i(q-c)/c \rceil^*=1$ for every integer $i$ such that $0\le i \le t_1-1$.
This together with the condition $s_2((q-c)/c)=t_2$ implies that
$\lceil i(q-c)/c \rceil^*=2$ for every integer $i$ such that  $t_1\le i \le t_1+t_2-1$.
Similarly, for each integer $\ell$
($1\le \ell \le 2(q-c)$),
we have $\lceil i(q-c)/c \rceil^*=\ell$ for every integer $i$ such that
$\sum_{h=1}^{\ell-1}t_h\le i \le \sum_{h=1}^{\ell}t_h-1$.
Since $\lceil i(q-c)/c \rceil^*=\lceil iq/c \rceil^*-i$,
this implies
$\lceil iq/c \rceil^*=\ell+i$
for every integer $i$ such that
$\sum_{h=1}^{\ell-1}t_h\le i \le \sum_{h=1}^{\ell}t_h-1$.
By Definition ~\ref{def4.1(3)_for_r>1}, this implies that
$s_j(q/c)=1$ if and only if $j=\ell+i$ for some integer $\ell$
($1\le \ell \le 2(q-c)$)
and some integer $i$ ($\sum_{h=1}^{\ell-1}t_h\le i \le \sum_{h=1}^{\ell}t_h-1$).
In other words,
$s_j(q/c)=0$ if and only if $j=\ell+\sum_{h=1}^{\ell}t_h$ for some
integer $\ell$
($1\le \ell \le 2(q-c)$).
Hence,\[
S(q/c)=(t_1\langle 1\rangle,0, t_2\langle 1\rangle,0,\dots,
t_{2(q-c)}\langle 1\rangle,0).
\]
\end{proof}

\begin{proof}[Proof of Proposition ~\ref{induction1} for the case $m_2=1$]
Suppose $m_2=1$. Then $r'=[m_3,\cdots, m_k]=(q-c)/c$ by Lemma ~\ref{continued-fraction}.
Thus, by Lemma ~\ref{properties02},
$T(q/c)=S((q-c)/c)=S(r')$.
On the other hand, Lemma ~\ref{shifting S-formula}
implies $T(q/p)=T(q/c)$. Hence we have $T(r)=T(q/c)=S(r')$.
\end{proof}

In order to prove the remaining case of Proposition ~\ref{induction1},
we need the following lemma.

\begin{lemma}
\label{exchange1}
$S(q/c)=
\overleftarrow{S}(q/(q-c))_{0 \leftrightarrow 1}$,
where $\overleftarrow{S}(q/(q-c))_{0 \leftrightarrow 1}$ is obtained from
$S(q/(q-c))$ by reading backwards and by replacing $0$ and $1$.
\end{lemma}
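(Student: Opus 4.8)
The plan is to translate the claimed identity of sequences into a single pointwise identity among their terms, and then verify that identity by direct manipulation of the symbols $\lfloor\cdot\rfloor_*$ and $\lceil\cdot\rceil^*$. First I would record that, by Definition ~\ref{def4.1(3)_for_r>1}, both $S(q/c)$ and $S(q/(q-c))$ have length $2q$, with
\[
\begin{gathered}
s_j(q/c)=\lfloor jc/q\rfloor_*-\lfloor (j-1)c/q\rfloor_*,\\
s_j(q/(q-c))=\lfloor j(q-c)/q\rfloor_*-\lfloor (j-1)(q-c)/q\rfloor_*,
\end{gathered}
\]
and that each term of either sequence is $0$ or $1$ because $c/q$ and $(q-c)/q$ both lie in $(0,1)$. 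Unwinding the meaning of $\overleftarrow{S}(q/(q-c))_{0 \leftrightarrow 1}$ (reverse the order, then swap $0\leftrightarrow 1$, i.e. apply $x\mapsto 1-x$), the assertion of the lemma is precisely the pointwise identity
\[
s_j(q/c)=1-s_{2q+1-j}(q/(q-c))\qquad(1\le j\le 2q).
\]

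Next I would set up the two elementary conversion rules
\[
\lfloor -x\rfloor_*=-\lceil x\rceil^*,\qquad \lceil -x\rceil^*=-\lfloor x\rfloor_*,
\]
which follow at once from the definitions of ``greatest integer strictly below'' and ``smallest integer strictly above'', together with the integer-shift rules $\lfloor t+n\rfloor_*=\lfloor t\rfloor_*+n$ and $\lceil t+n\rceil^*=\lceil t\rceil^*+n$ already recorded in the text preceding Lemma ~\ref{u-word}. These are the only tools the computation requires.

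The heart is then a short calculation. Putting $n=2q+1-j$ and writing $n(q-c)/q=n-nc/q$, the conversion and shift rules give $\lfloor n(q-c)/q\rfloor_*=n-\lceil nc/q\rceil^*$, whence
\[
s_{2q+1-j}(q/(q-c))=1-\bigl(\lceil (2q+1-j)c/q\rceil^*-\lceil (2q-j)c/q\rceil^*\bigr).
\]
Finally I would use $(2q+1-j)c/q=2c-(j-1)c/q$ and $(2q-j)c/q=2c-jc/q$, together with $\lceil -x\rceil^*=-\lfloor x\rfloor_*$ and the integer-shift rule, to collapse the two ceilings to $2c-\lfloor (j-1)c/q\rfloor_*$ and $2c-\lfloor jc/q\rfloor_*$; the parenthesized difference then equals $\lfloor jc/q\rfloor_*-\lfloor (j-1)c/q\rfloor_*=s_j(q/c)$, yielding $s_{2q+1-j}(q/(q-c))=1-s_j(q/c)$, which is the required identity.

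I do not expect any genuine obstacle: once the statement is phrased pointwise, the whole argument is bookkeeping with the two integer-part operators. The only point demanding care is the consistent conversion between the ``strictly-below'' and ``strictly-above'' integer parts under negation, and keeping the index reflection $j\mapsto 2q+1-j$ straight throughout.
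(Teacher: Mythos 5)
Your proposal is correct and follows essentially the same route as the paper: both reduce the lemma to the pointwise identity $s_j(q/c)+s_{i}(q/(q-c))=1$ for reflected indices and verify it by direct manipulation of $\lfloor\cdot\rfloor_*$ (the paper pairs $j$ with $q+1-j$, implicitly invoking the half-rotation symmetry of Proposition~\ref{properties0}, whereas you use the literal reversal index $2q+1-j$, which keeps all indices in range). The computation and the conversion rules you use are sound, so no gap.
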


\begin{proof}
By Lemma ~\ref{continued-fraction},
we have
\begin{align*}
q/c &=[0,m_2,m_3,\dots,m_k]\\
q/(q-c) &=[0,1,m_2-1,m_3,\dots,m_k].
\end{align*}
Thus
$S(q/c)$ and $S(q/(q-c))$ consists of $0$ and $1$, by Lemma ~\ref{properties01}.
On the other hand, for each $1\le i\le 2q$,
we have the following identities by
Definition ~\ref{def4.1(3)_for_r>1}:

\begin{align*}
s_i(q/(q-c))
&=
\lfloor i(q-c)/q \rfloor_*-\lfloor (i-1)(q-c)/q \rfloor_*\\
&=
(i+\lfloor -ic/q \rfloor_*)-((i-1)-\lfloor -(i-1)c/q \rfloor_*)\\
&=
1+\lfloor -ic/q \rfloor_*-\lfloor (1-i)c/q \rfloor_*\\
s_{q+1-i}(q/c)
&=
\lfloor (q+1-i)c/q \rfloor_*- \lfloor (q-i)c/q \rfloor_*\\
&=
(1+\lfloor (1-i)c/q \rfloor_*)-(1+ \lfloor -ic/q \rfloor_*)\\
&=
\lfloor (1-i)c/q \rfloor_*- \lfloor -ic/q \rfloor_*.
\end{align*}

Hence, for each $1\le i\le 2q$,
\[
s_i(q/(q-c))+s_{q+1-i}(q/c)=1.
\]
This implies the desired result.
\end{proof}

\begin{corollary}
\label{exchange2}
If $m_2\ge 2$, then
$T(q/c)=\overleftarrow{T}(q/(q-c))$.
\end{corollary}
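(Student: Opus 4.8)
The plan is to write down the explicit normal forms of $S(q/c)$ and $S(q/(q-c))$ furnished by Proposition~\ref{properties}, and then to pass from one to the other using the reversal-and-exchange identity of Lemma~\ref{exchange1}; the two $T$-sequences will then be read off directly as the block-length data of these normal forms.

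First I would record the shapes of both sequences. By Lemma~\ref{continued-fraction} we have $q/c=[0,m_2,m_3,\dots,m_k]$, so the leading partial quotient is $m_1=0$, i.e.\ $m=0$; and since $m_2\ge 2$ we are in case (2)(b) of Proposition~\ref{properties} (whose hypotheses for $q/c$ are verified by Lemma~\ref{properties01}). Writing $T(q/c)=(t_1,\dots,t_s)$, this gives
\[
S(q/c)=(1,\, t_1\langle 0\rangle,\, 1,\, t_2\langle 0\rangle,\, \dots,\, 1,\, t_s\langle 0\rangle),
\]
that is, single $1$'s separating blocks of $0$'s whose lengths are exactly the terms of $T(q/c)$. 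Likewise, by Lemma~\ref{continued-fraction} we have $q/(q-c)=[0,1,m_2-1,m_3,\dots,m_k]$, so again $m=0$ but now the second partial quotient equals $1$, placing us in case (2)(a) of Proposition~\ref{properties}. Writing $T(q/(q-c))=(t_1',\dots,t_{s'}')$, this gives
\[
S(q/(q-c))=(t_1'\langle 1\rangle,\, 0,\, t_2'\langle 1\rangle,\, 0,\, \dots,\, t_{s'}'\langle 1\rangle,\, 0),
\]
that is, blocks of $1$'s of lengths $t_1',\dots,t_{s'}'$ separated by single $0$'s.

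Finally I would invoke Lemma~\ref{exchange1}, namely $S(q/c)=\overleftarrow{S}(q/(q-c))_{0\leftrightarrow 1}$. Reading the second display backwards reverses the order of the blocks, and exchanging $0\leftrightarrow 1$ turns each block of $t_i'$ ones into a block of $t_i'$ zeros while turning each separating $0$ into a separating $1$; the result is
\[
\overleftarrow{S}(q/(q-c))_{0\leftrightarrow 1}=(1,\, t_{s'}'\langle 0\rangle,\, 1,\, t_{s'-1}'\langle 0\rangle,\, \dots,\, 1,\, t_1'\langle 0\rangle),
\]
which has exactly the shape of the first display. Matching the two block-length sequences forces $s=s'$ and $t_i=t_{s'+1-i}'$ for every $i$, i.e.\ $T(q/c)=\overleftarrow{T}(q/(q-c))$, as claimed.

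The computation is essentially bookkeeping, and I do not expect a genuine obstacle; the one point that must be handled with care is the role-reversal effected by the exchange $0\leftrightarrow 1$, which interchanges ``separator'' and ``block'' and thereby carries the $m_2\ge 2$ normal form of $S(q/c)$ to the reverse of the $m_2=1$ normal form of $S(q/(q-c))$. Once this correspondence of roles is made explicit, the identification of block lengths---and hence the equality of the $T$-sequences up to reversal---is immediate.
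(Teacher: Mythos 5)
Your proposal is correct and follows essentially the same route as the paper: the paper's proof likewise observes that under $m_2\ge 2$ the sequence $T(q/c)$ records the runs of $0$'s in $S(q/c)$ while $T(q/(q-c))$ records the runs of $1$'s in $S(q/(q-c))$, and then applies Lemma~\ref{exchange1}. Your version merely makes the block-matching bookkeeping explicit.
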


\begin{proof}
Since $q/c =[0,m_2,m_3,\dots,m_k]$ and since $m_2\ge 2$,
the sequence $T(q/c)$ records the successive occurrences of $0$'s in
$S(q/c)$. On the other hand, since $q/(q-c) =[0,1,m_2-1,m_3,\dots,m_k]$,
$T(q/(q-c))$ records the successive occurrences of $1$'s in
$S(q/(q-c))$. Hence Lemma ~\ref{exchange1} implies the desired result.
\end{proof}

\begin{proof}[Proof of Proposition ~\ref{induction1}
for the case $m_2\ge 2$]
Suppose $m_2\ge 2$.
Then $T(q/c)=\overleftarrow{T}(q/(q-c))$
by Corollary ~\ref{exchange2}.
Note that the $m_2$ for $q/(q-c)$ is equal to $1$,
and the $r'$ for $q/(q-c)$ is equal to $[m_2-1,m_3,\dots,m_k]$,
which is equal to the $r'$ for
the original $r=q/p$.
Hence, it follows from Proposition ~\ref{induction1} for the case $m_2=1$
that $T(q/(q-c))=S(r')$.
Thus we have $T(q/c)=\overleftarrow{T}(q/(q-c))=\overleftarrow{S}(r')$.
Since $T(q/p)=T(q/c)$
by Lemma ~\ref{shifting S-formula},
we obtain the desired identity,
$T(r)=\overleftarrow{S}(r')$.
\end{proof}

\begin{proof}[Proof of Proposition ~\ref{sequence}]
The proof proceeds by induction on $k \ge 1$.
If $k=1$,
$S(r)=(m, m)$.
So putting $S_1$ to be the empty sequence and $S_2=(m)$,
the assertion clearly holds. Now let $k \ge
2$, and let $r'$ be
the rational number defined as in
Proposition ~\ref{induction1}. We consider four cases separately.

\medskip
\noindent {\bf Case 1.} {\it $m_2=1$ and $k=3$}.
\medskip

In this case, $r'=[m_3]$. Thus
$S(r')=(T_1, T_2, T_1, T_2)$,
where $T_1=\emptyset$ and $T_2=(m_3)$.
Put
\[
S_1 =(m_3\langle m+1 \rangle), \quad \text{and} \quad
S_2 =(m).
\]
Since $T(r)=S(r')=(m_3,m_3)$
by Proposition ~\ref{induction1},
we see $S(r) =(S_1, S_2, S_1, S_2)$
by the definition of $T(r)$.
Obviously, $S_1$ and $S_2$ satisfy the desired conditions.

\medskip
\noindent {\bf Case 2.} {\it $m_2\ge 2$ and $k=2$}.
\medskip

In this case, $r'=[m_2-1]$. Thus
$S(r')=(T_1, T_2, T_1, T_2)$,
where $T_1=\emptyset$ and $T_2=(m_2-1)$.
Put
\[
S_1 =(m+1), \quad \text{and} \quad
S_2 =((m_2-1)\langle m \rangle).
\]
Since $T(r)=\overleftarrow{S}(r')= (m_2-1, m_2-1)$ by
Proposition ~\ref{induction1},
we see $S(r) =(S_1, S_2, S_1, S_2)$
by the definition of $T(r)$.
Obviously, $S_1$ and $S_2$ satisfy the desired conditions.

\medskip
\noindent {\bf Case 3.} {\it $m_2=1$ and $k\ge 4$}.
\medskip

In this case, $r'=[m_3, \dots, m_k]$.
By the inductive hypothesis,
\[S(r')=(T_1, T_2, T_1, T_2),\]
where $T_1$ and $T_2$ are symmetric subsequences of
$S(r')$
such that each $T_i$ occurs only twice in $CS(r')$,
$T_1$ begins and ends with $m_3+1$, and such that $T_2$ begins and ends
with $m_3$. Write
\[T_1=(t_1, \dots, t_{s_1}) \quad \text{\rm and} \quad
T_2=(t_{s_1+1}, \dots, t_{s_2}),\] and put
\[
\begin{aligned}
S_1
&=(t_1 \langle m+1 \rangle,  m,
t_2 \langle m+1 \rangle,
\dots, t_{s_1-1}\langle m+1 \rangle, m,
t_{s_1}\langle m+1 \rangle); \\
S_2 &=(m, t_{s_1+1}\langle m+1\rangle,m, \dots,
m, t_{s_2}\langle m+1\rangle, m).
\end{aligned}
\]
Since $T(r)=S(r')$ by
Proposition ~\ref{induction1},
we see $S(r) =(S_1, S_2, S_1, S_2)$
by the definition of $T(r)$.
Since $T_1$ and $T_2$ are symmetric by the inductive hypothesis,
we see that $S_1$ and $S_2$ are symmetric subsequences of
$S(r)$
such that $S_1$ begins and ends with $m+1$, and $S_2$ begins and
ends with $m$.

It remains to show that each $S_i$ occurs
only twice in $CS(r)$. Recall that
$S_1$ begins and ends with $m_3+1$ consecutive $m+1$'s, and that the maximum number of consecutive occurrences
of $m+1$ in $\lp S_1, S_2, S_1, S_2 \rp$ is $m_3+1$
(apply Proposition ~\ref{properties} to $T(r)=S(r')$
and use the definition of $T(r)$).
So, if $S_1$ occurred more than twice in
$\lp S_1, S_2, S_1, S_2 \rp$, $T_1$
also would occur more than twice in $\lp T_1, T_2, T_1, T_2 \rp$, a
contradiction. On the other hand, recall that $m$'s are isolated in $CS(r)$,
and that $S_2$ begins and ends with $m$.
So if $S_2$ occurred more than twice in $\lp S_1, S_2, S_1, S_2 \rp$, $T_2$
also would occur more than twice in $\lp T_1, T_2, T_1, T_2 \rp$, a
contradiction.
\medskip

\noindent {\bf Case 4.} {\it $m_2 \ge 2$ and $k\ge 3$}.
\medskip

In this case, $r'=[m_2-1, m_3, \dots, m_k]$. By the inductive hypothesis,
\[S(r')=(T_1, T_2, T_1, T_2),\]
where $T_1$ and $T_2$ are symmetric subsequences of $CS(r')$
such that each $T_i$ occurs only twice in $CS(r')$, $T_1$
begins and ends with $m_2$, and such that $T_2$ begins and
ends with $m_2-1$. Write
\[T_1=(t_1, \dots, t_{s_1}) \quad \text{\rm and} \quad
T_2=(t_{s_1+1}, \dots, t_{s_2}) ,\]
and put
\[
\begin{aligned}
S_1 &=
(m+1, t_{s_1+1}\langle m\rangle, m+1,
\dots, m+1, t_{s_2}\langle m\rangle, m+1);\\
S_2 &=
(t_1\langle m\rangle, m+1,t_2\langle m\rangle, \dots,
t_{s_1-1}\langle m\rangle, m+1, t_{s_1}\langle m\rangle).
\end{aligned}
\]
Since $T(r)=\overleftarrow{S}(r')$ by
Proposition ~\ref{induction1},
we see $S(r) =(S_1, S_2, S_1, S_2)$
by the definition of $T(r)$ and by using
the fact that $T_1$ and $T_2$ are symmetric.
By using the inductive hypothesis,
we see that $S_1$ and $S_2$ are
symmetric subsequences of $S(r)$
such that $S_1$ begins and ends
with $m+1$, and $S_2$ begins and ends with $m$.
Furthermore, arguing as in Case ~3, we can show that each $S_i$ occurs only twice
in $CS(r)$.
To show the assertion for $S_2$, we use the fact that $S_2$ begins and ends with $m_2$ consecutive $m$'s, and that
the maximum number of consecutive occurrences of $m$ in $\lp S_1, S_2, S_1, S_2 \rp$ is $m_2$.
\end{proof}

\section{Small cancellation conditions for 2-bridge link groups}
\label{small_cancellation}

Let $F(X)$ be the free group with basis $X$. A subset $R$ of $F(X)$ is called {\it symmetrized},
if all elements of $R$ are cyclically reduced and,
for each $w \in R$, all cyclic permutations of $w$ and $w^{-1}$ also belong to $R$.

\begin{definition}{\rm Suppose that $R$ is a symmetrized subset of $F(X)$.
A nonempty word $b$ is called a {\it piece} if there exist distinct $w_1, w_2 \in R$
such that $w_1 \equiv bc_1$ and $w_2 \equiv bc_2$.
Small cancellation conditions $C(p)$ and $T(q)$,
where $p$ and $q$ are integers such that $p \ge 2$ and $q \ge 3$,
are defined as follows (see \cite{lyndon_schupp}).
\begin{enumerate}[\indent \rm (1)]
\item Condition $C(p)$: If
$w \in R$
is a product of $n$ pieces, then $n \ge p$.

\item Condition $T(q)$: For
$w_1, \dots, w_n \in R$
with no successive elements
$w_i, w_{i+1}$
an inverse pair $(i$ mod $n)$, if $n < q$, then at least one of the products
$w_1 w_2,\dots,$ $w_{n-1} w_n$, $w_n w_1$
is freely reduced without cancellation.
\end{enumerate}
}
\end{definition}

In this section, we prove the following key theorem.

\begin{theorem} \label{small_cancellation_condition}
Let $r$ be a rational number such that $0 < r< 1$.
Recall the presentation
$\langle a, b \, |\, u_r \rangle$ of $G(K(r))$
given in Section ~\ref{group_presentation}, and
let $R$ be the symmetrized subset of $F(a, b)$ generated
by the single relator $u_r$.
Then $R$ satisfies $C(4)$ and $T(4)$.
\end{theorem}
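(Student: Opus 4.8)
The plan is to translate the small cancellation conditions $C(4)$ and $T(4)$ into combinatorial statements about the cyclic $S$-sequence $CS(r)$ and then verify those statements using the structural results of Section~\ref{sequences}, most importantly Propositions~\ref{properties}, \ref{induction1} and \ref{sequence}. Since $u_r$ is alternating and cyclically reduced (Lemma~\ref{u-word}), every element of the symmetrized set $R$ is an alternating cyclically reduced word, and by Proposition~\ref{meaning_of_S-sequence}(1) such a word is determined by its initial letter together with its $S$-sequence. The key observation is that a piece $b$ (a common prefix of two distinct $w_1,w_2\in R$) corresponds to a common subword of two cyclic shifts of $(u_r^{\pm1})$, and hence its ``interior'' maximal runs of equal-sign letters are recorded by a \emph{common subsequence} of the cyclic sequence $CS(r)$. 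So bounding the number of pieces needed to express $u_r$ reduces to bounding how long a subsequence of $CS(r)$ can be a piece, i.e.\ can occur in two genuinely different ways inside the doubled-up cyclic sequence.

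First I would make the dictionary between words and sequences precise: I would show that any piece, after discarding its at most two boundary letters (which may be half of a run), is carried by a subsequence of $CS(r)$ that occurs at two distinct positions in $CS(r)$ viewed cyclically. The decomposition $CS(r)=\lp S_1,S_2,S_1,S_2\rp$ from Proposition~\ref{sequence}, together with the uniqueness clause (each $S_i$ occurs \emph{only twice} in $CS(r)$), is exactly the tool that prevents long repeated blocks. Concretely, because $S_1$ begins and ends with $m+1$ while $S_2$ begins and ends with $m$ (parts (3),(4) of Proposition~\ref{sequence}) and each occurs only twice, any sufficiently long repeated subsequence is forced to contain a whole copy of $S_1$ or $S_2$, and the ``only twice'' property then pins down the ambiguity. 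From this I would extract that a piece can overlap at most a controlled portion of $u_r$, so that $u_r$ cannot be written as a product of fewer than four pieces; this gives $C(4)$.

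For $T(4)$ I would argue by contradiction: suppose three elements $w_1,w_2,w_3\in R$, no two an inverse pair, have all three products $w_1w_2$, $w_2w_3$, $w_3w_1$ involving cancellation. Cancellation in $w_iw_{i+1}$ means a terminal segment of $w_i$ cancels an initial segment of $w_{i+1}$, and since the $w_i$ are alternating these cancelling segments are again pieces, described by subsequences of $CS(r)$. Using the half-rotation symmetry of $CS(r)$ (Proposition~\ref{properties0}), the symmetry of $CS(r)$ (Corollary~\ref{induction2}), and the rigid block structure $\lp S_1,S_2,S_1,S_2\rp$, I expect to show that the three simultaneous cancellations over-determine the cyclic positions and force two of the $w_i$ to coincide or to be an inverse pair, contradicting the hypothesis. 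The alternating property is crucial here, since it guarantees that letters at the splice points have predictable signs and that a product fails to be reduced only in the structured way recorded by the sequences.

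The main obstacle, I expect, is the careful bookkeeping at the \emph{boundaries} of pieces: a piece need not begin or end at a run boundary, so when I pass from a common subword to a common subsequence of $CS(r)$ I lose up to one unit at each end, and I must check that this loss is never enough to defeat the bound coming from the ``each $S_i$ occurs only twice'' statement. In particular the small-length cases (for instance $k=1$, or $m_2=1$ with short $T$-sequences, or $m=1$ where runs of length $1$ make the alternating structure nearly degenerate) will need to be handled separately, and verifying that no accidental long overlap arises in these edge cases is where the real work lies. Everything else is a reasonably mechanical translation of the sequence combinatorics already established in Section~\ref{sequences} into the language of pieces.
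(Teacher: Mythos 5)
Your plan for $C(4)$ is essentially the paper's: the paper encodes pieces via the decomposition $u_r\equiv v_1v_2v_3v_4$ corresponding to $S(r)=(S_1,S_2,S_1,S_2)$, shows (using the ``each $S_i$ occurs only twice'' clause of Proposition~\ref{sequence} together with a separate lemma determining which subwords actually \emph{are} pieces) that no piece contains $v_1$ or $v_3$ and none has the form $v_{1e}v_2v_{3b}$, and then enumerates the maximal $n$-pieces starting at each position to conclude that no product of three pieces closes up around $(u_r)$. One caution: your phrase ``a piece can overlap at most a controlled portion of $u_r$, so that $u_r$ cannot be written as a product of fewer than four pieces'' hides a real step. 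A single piece can have length almost $|u_r|/2$ (e.g.\ $v_{1e}v_2$), so a pure length bound only rules out two pieces, not three; to get $C(4)$ you must track, position by position, how far a maximal $1$-, $2$-, $3$-piece can extend and verify the maximal $3$-piece never wraps around. This is exactly the bookkeeping you flag, so the $C(4)$ half is a sound outline.

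The genuine gap is in your $T(4)$ argument. The contradiction you aim for --- that the three cancellations ``force two of the $w_i$ to coincide or to be an inverse pair'' --- does not work: two of the $w_i$ coinciding is perfectly consistent with the hypothesis of $T(4)$ (and in fact $w\cdot w$ is always reduced for cyclically alternating $w$, so coincidence cannot be the source of the cancellations anyway). The actual mechanism is much simpler and uses none of the sequence machinery: every element of $R$ is cyclically alternating, hence begins with a power of one letter of $\{a,b\}$ and ends with a power of the \emph{other}. If $w_1$ ends in $x^{\epsilon_1}$, cancellation in $w_1w_2$ forces $w_2$ to begin with $x^{-\epsilon_1}$ and hence to end in $y^{\epsilon_2}$ with $y\neq x$; then $w_3$ must begin with $y^{-\epsilon_2}$ and end in $x^{\epsilon_3}$; then cancellation in $w_3w_1$ forces $w_1$ to begin with $x^{-\epsilon_3}$, so $w_1$ both begins and ends in a power of the same letter $x$, contradicting alternation. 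The oddness of $3$ is doing all the work; the constraints are simply unsatisfiable, rather than forcing a coincidence. Your appeal to the half-rotation symmetry, Corollary~\ref{induction2}, and the block structure is misdirected here and, as stated, would not terminate in a contradiction of the $T(4)$ hypothesis.
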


In the remainder of this section,
$r$ denotes a rational number such that $0 < r< 1$,
and $(S_1, S_2, S_1, S_2)$ denotes the decomposition
of $S(r)=S(u_r)$
given by Proposition ~\ref{sequence}.
We decompose $u_r \equiv v_1v_2v_3v_4$,
where subwords $v_1$ and $v_3$
correspond to $S_1$, and subwords $v_2$ and $v_4$
correspond to $S_2$.
As in Section ~\ref{sequences},
we consider the continued fraction expansion
$r=[m_1, m_2, \dots, m_k]$,
where $k \ge 1$, $(m_1, m_2, \dots, m_k) \in (\mathbb{Z}_+)^k$ and
$m_k \ge 2$ unless $k=1$.
It should be noted that if $k=1$ then
both $v_1$ and $v_3$ are empty words.

We begin with the following lemma.

\begin{lemma}
\label{initial}
Let $w$ be an arbitrary cyclic permutation
of the single relator $u_r$ of the group presentation of $G(K(r))$.
Then the set
\[
\{ \text{the initial letter of  $w' \, | \, (w') \equiv (u_r^{\pm 1})$ and $S(w')=S(w)$} \}
\]
equals $\{a, a^{-1}, b, b^{-1} \}$.
\end{lemma}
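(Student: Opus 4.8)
The plan is to exploit the $(\ZZ/2\ZZ)^2$-symmetry already isolated in Lemma~\ref{some_automorphisms}(1). Consider the two automorphisms $\sigma,\tau$ of the free group $\langle a,b\rangle$ determined by $\sigma\colon (a,b)\mapsto (a^{-1},b^{-1})$ and $\tau\colon (a,b)\mapsto (b,a)$. They commute and generate a group isomorphic to $(\ZZ/2\ZZ)^2$, whose three nontrivial elements $\sigma,\tau,\sigma\tau$ are exactly the three automorphisms listed in Lemma~\ref{some_automorphisms}(1). The key point is that each of these maps acts letter-by-letter on a word, and therefore interacts transparently with both the $S$-sequence and the initial letter.

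First I would record the effect of each $\varphi\in\{\sigma,\tau,\sigma\tau\}$ on a cyclically reduced alternating word $w$. Since $\sigma$ flips every exponent and $\tau$ leaves every exponent unchanged, neither map disturbs the positions at which the exponent changes sign; hence the decomposition of $w$ into maximal constant-sign blocks is carried to that of $\varphi(w)$ with the block lengths preserved in order, so that $S(\varphi(w))=S(w)$ as \emph{linear} sequences. This exact (not merely cyclic) equality is the one place that needs care. At the level of the first letter, $\sigma$ inverts it ($a\leftrightarrow a^{-1}$, $b\leftrightarrow b^{-1}$), $\tau$ swaps its type ($a\leftrightarrow b$, $a^{-1}\leftrightarrow b^{-1}$), and $\sigma\tau$ does both. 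Thus the orbit of the initial letter of $w$ under $\langle\sigma,\tau\rangle$ is all of $\{a,a^{-1},b,b^{-1}\}$, whatever letter $w$ begins with.

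Next I would check admissibility, i.e.\ that $(\varphi(w))\equiv(u_r^{\pm1})$. By Lemma~\ref{some_automorphisms}(1), $\varphi(u_r)$ is conjugate to $u_r$ or $u_r^{-1}$; since $u_r$ (and hence $\varphi(u_r)$) is cyclically reduced by Lemma~\ref{u-word}, conjugacy in the free group means $\varphi(u_r)$ is literally a cyclic permutation of $u_r^{\pm1}$, so $(\varphi(u_r))\equiv(u_r^{\pm1})$. As $w$ is a cyclic permutation of $u_r$ and $\varphi$ acts letter-by-letter, $\varphi(w)$ is a cyclic permutation of $\varphi(u_r)$, whence $(\varphi(w))\equiv(u_r^{\pm1})$ as well. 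The word $w$ itself is of course admissible, since $(w)\equiv(u_r)$.

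Putting these together finishes the argument: the four words $w,\sigma(w),\tau(w),\sigma\tau(w)$ all satisfy $(w')\equiv(u_r^{\pm1})$ and $S(w')=S(w)$, and their initial letters exhaust $\{a,a^{-1},b,b^{-1}\}$; as the initial letter of any word in $\{a,b\}$ lies in this four-element set, the set in the statement equals $\{a,a^{-1},b,b^{-1}\}$. As a byproduct, Proposition~\ref{meaning_of_S-sequence}(1) shows these are the \emph{only} admissible words, an alternating word being determined by its initial letter together with its $S$-sequence. I do not expect a serious obstacle here: the entire content is the symmetry bookkeeping, and the only step demanding attention is verifying that each $\varphi$ preserves the $S$-sequence exactly, rather than merely up to a cyclic shift.
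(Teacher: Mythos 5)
Your proof is correct, but it takes a genuinely different route from the paper's. The paper works entirely inside the combinatorial framework of Proposition~\ref{sequence}: writing $u_r\equiv v_1v_2v_3v_4$, it exhibits the three words $v_3v_4v_1v_2$, $v_1^{-1}v_4^{-1}v_3^{-1}v_2^{-1}$ and $v_3^{-1}v_2^{-1}v_1^{-1}v_4^{-1}$ as cyclic permutations of $u_r^{\pm1}$ sharing the $S$-sequence of $u_r$, and then proves their initial letters are pairwise distinct by contradiction arguments resting on Proposition~\ref{meaning_of_S-sequence}(1) and on a separate Claim that $u_r^{-1}$ is not a cyclic permutation of $u_r$ (proved via torsion-freeness of $F(a,b)$); the general $w$ is then handled by shifting. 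You instead import the $(\ZZ/2\ZZ)^2$-symmetry of Lemma~\ref{some_automorphisms}(1): since $\sigma,\tau,\sigma\tau$ act letter-by-letter, they visibly preserve the linear $S$-sequence and move the initial letter through all four possibilities, while conjugacy of $\varphi(u_r)$ to $u_r^{\pm1}$ plus cyclic reducedness gives $(\varphi(w))\equiv(u_r^{\pm1})$. Your argument is shorter, needs neither the $(S_1,S_2,S_1,S_2)$ decomposition nor the paper's Claim, and makes the distinctness of the four initial letters immediate rather than a consequence of a contradiction; the price is that it leans on the geometrically-proved Lemma~\ref{some_automorphisms}(1), whereas the paper's version is self-contained within the combinatorics of Sections~\ref{sequences}--\ref{small_cancellation}. (By Proposition~\ref{meaning_of_S-sequence}(1) the two proofs necessarily produce the same three companion words, just identified differently.) The one delicate point --- that $S(\varphi(w))=S(w)$ as linear, not merely cyclic, sequences --- is exactly the point you flag and justify, so I see no gap.
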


\begin{proof}
We first prove the lemma when $w \equiv u_r (\equiv v_1v_2v_3v_4)$.
Consider the cyclic permutation $w_1 := v_3v_4v_1v_2$ of $w$
and the cyclic permutations
$w_2 := v_1^{-1}v_4^{-1}v_3^{-1}v_2^{-1}$
and $w_3 := v_3^{-1}v_2^{-1}v_1^{-1}v_4^{-1}$ of $u_r^{-1}$.
Then by Proposition ~\ref{sequence},
$w_1$, $w_2$ and $w_3$ share the same $S$-sequence with $w$.
We show that the initial letters of $w$, $w_1$, $w_2$ and $w_3$
are all distinct.
By Lemma ~\ref{presentation},
$w \equiv u_r$ has the initial letter $a$,
and $w_1$ has initial letter $a^{-1}$, $b$ or $b^{-1}$.
Thus $w$ and $w_1$ have different initial letters.
This also implies that $w_2$ and $w_3$ have different initial letters,
as follows.
Suppose $w_2$ and $w_3$ share the same initial letter.
Then, since $S(w_2)=S(w_3)$, we have $w_2 \equiv w_3$
by Proposition ~\ref{meaning_of_S-sequence}(1).
However, this implies $v_1 \equiv v_3$ and $v_2 \equiv v_4$,
and hence $w \equiv w_1$, a contradiction.
Next, we show that the initial letters of $w_2$ and $w_3$
are different from those of $w$ and $w_1$.
Suppose to the contrary that this is not the case.
Then, since these four words have the same $S$-sequences,
it follows from Proposition ~\ref{meaning_of_S-sequence}(1) that
$w_2$ or $w_3$ is equal to $w$ or $w_1$.
This implies that $u_r^{-1}$ is a cyclic permutation of $u_r$.
However, this is impossible by the following claim,
and this completes the proof of the lemma when $w \equiv u_r$.

\medskip
\noindent{\bf Claim.}
{\it $u_r^{-1}$ cannot be a cyclic permutation of $u_r$.}

\begin{proof}[Proof of Claim]
If $u_r^{-1}$ were a cyclic permutation of $u_r$,
then there would be decompositions such as $u_r \equiv z_1z_2$ and
$u_r^{-1} \equiv z_2z_1$.
Since $u_r^{-1} \equiv (z_1z_2)^{-1} \equiv z_2^{-1}z_1^{-1}$,
we would have $z_i \equiv z_i^{-1}$
yielding that $z_i^2=1$  ($i=1,2$) in the free group $F(a,b)$.
Since $F(a,b)$ is torsion free, we have $z_i=1$ ($i=1,2$)
and hence $u_r=z_1z_2=1$ in $F(a,b)$, a contradiction.
\end{proof}

Now, let $w$ be an arbitrary cyclic permutation of $u_r$.
Let $d$ be an integer such that
$w$ is obtained from $u_r$ by cyclical shift of $d$-digits.
For each $i=1,2,3$, let $\hat{w_i}$ be the word
obtained from the word $w_i$ in the previous paragraph
by cyclic shift of $d$-digits.
Then, since $S(w_i)=S(u_r)$, we have $S(\hat{w_i})=S(w)$
($i=1,2,3$).
This implies that the initial letters of $w$ and $\hat{w_i}$ ($i=1,2,3$)
are all distinct.
Because, otherwise, $w$ and  $\hat{w_i}$ ($i=1,2,3$)
are not all distinct by Proposition ~\ref{meaning_of_S-sequence}(1),
and hence $u_r$ and $w_i$ ($i=1,2,3$) are not all distinct,
a contradiction.
Moreover, $\hat{w_i}$ ($i=1,2,3$) are cyclic permutations of $w^{\pm}$,
because
$w_i$ ($i=1,2,3$) are cyclic permutations of $u_r^{\pm 1}$
and $w$ is a cyclic permutation of $u_r$.
Hence we obtain the desired result.
\end{proof}

\begin{lemma}
\label{maximal_piece}
For the relator $u_r \equiv v_1 v_2 v_3 v_4$
with $r=[m_1, m_2, \dots, m_k]$,
the following hold.
\begin{enumerate}[\indent \rm (1)]
\item If $k=1$, then the following hold.

\begin{enumerate}[\rm (a)]
\item No piece can contain $v_2$ or $v_4$.

\item No piece is of the form
$v_{2e} v_{4b}$ or $v_{4e} v_{2b}$,
where $v_{ib}$ and $v_{ie}$ are nonempty initial and terminal subwords of $v_i$, respectively.

\item Every subword of the form $v_{2b}$, $v_{2e}$, $v_{4b}$, or $v_{4e}$ is a piece,
where $v_{ib}$ and $v_{ie}$ are nonempty initial and terminal subwords of $v_i$ with $|v_{ib}|, |v_{ie}| \le |v_i|-1$, respectively.
\end{enumerate}

\item If $k \ge 2$, then the following hold.

\begin{enumerate}[\rm (a)]
\item No piece can contain $v_1$ or $v_3$.

\item No piece is of the form
$v_{1e} v_2 v_{3b}$ or $v_{3e} v_4 v_{1b}$,
where $v_{ib}$ and $v_{ie}$ are nonempty initial and terminal subwords of $v_i$, respectively.

\item Every subword of the form $v_{1e} v_2$, $v_2 v_{3b}$, $v_{3e} v_4$, or $v_4 v_{1b}$ is a piece,
where $v_{ib}$ and $v_{ie}$ are nonempty initial and terminal subwords of $v_i$ with $|v_{ib}|, |v_{ie}| \le |v_i|-1$, respectively.
\end{enumerate}
\end{enumerate}
\end{lemma}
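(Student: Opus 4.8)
The plan is to translate the letter-level combinatorics of pieces into combinatorics of $S$-sequences, and then to exploit the engine provided by Proposition~\ref{sequence}(2), namely that each of $S_1$ and $S_2$ occurs only twice in the cyclic sequence $CS(r)$. First I would record the basic dictionary: every $w\in R$ is alternating and cyclically reduced with $CS(w)=CS(r)$ (using that $CS(r)$ is symmetric by Corollary~\ref{induction2}, so that passing to $u_r^{-1}$ does not change it), and by Proposition~\ref{meaning_of_S-sequence}(1) an alternating subword is determined by its initial letter together with its $S$-sequence. Consequently, if $b$ is a piece with $w_1\equiv bc_1$ and $w_2\equiv bc_2$ for distinct $w_1,w_2\in R$, then $b$ is alternating and its $S$-sequence, away from its first and last (possibly truncated) blocks, is an interval of consecutive terms of $CS(r)=(S_1,S_2,S_1,S_2)$ occupying the same offset from the start of $w_1$ as from the start of $w_2$.

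With this dictionary in hand I would establish the ``no piece'' assertions (1a), (1b), (2a), (2b) by an over-determination argument. Consider the critical full block, which is $v_1$ (equivalently $v_3$) when $k\ge 2$ and $v_2$ (equivalently $v_4$) when $k=1$, and suppose some piece $b$ contained a full copy of it; then the $S$-sequence of $b$ contains in its interior the corresponding subsequence of $CS(r)$, namely $S_1$ when $k\ge 2$ and the maximal block $S_2$ when $k=1$. Because that subsequence occurs only twice in $CS(r)$ and its neighbouring terms are forced by the pattern $(S_1,S_2,S_1,S_2)$, an interior occurrence is localised to one of only two positions of $CS(r)$; since $b$ is a common prefix, this localisation has the same offset in $w_1$ and in $w_2$. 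Matching the offsets together with the initial-letter information of Lemma~\ref{initial} would then pin the starting positions of $w_1$ and of $w_2$ so tightly that $w_1\equiv w_2$, contradicting $w_1\ne w_2$. The Claim inside the proof of Lemma~\ref{initial}, that $u_r^{-1}$ is not a cyclic permutation of $u_r$, is precisely what excludes the spurious coincidence arising from an inverse rather than a genuine shift. Assertion (b), forbidding a piece $v_{1e}v_2v_{3b}$ or $v_{3e}v_4v_{1b}$, is the same argument run with $S_2$ flanked by the tail and head of the two copies of $S_1$.

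For the ``is a piece'' assertions (1c), (2c) I would argue constructively, exhibiting two distinct elements of $R$ that share the prescribed subword. Here the symmetry of $S_1$ and $S_2$ (Proposition~\ref{sequence}(1)) is decisive: since each $S_i$ reads the same forwards and backwards and occurs exactly twice in $CS(r)$, the two occurrences supply two genuinely different cyclic permutations of $u_r^{\pm 1}$ that agree along a terminal part of $v_1$ followed by all of $v_2$ (and along its three variants), while Lemma~\ref{initial} guarantees that the required initial letters can all be realised so that the two words are honestly distinct. I expect the main obstacle to lie not in the $S$-sequence bookkeeping but in the faithful tracking of letters and signs: one must verify that the forced alignments in (a) and (b) really yield a visual coincidence $w_1\equiv w_2$, rather than merely equal $S$-sequences, and that the overlaps produced in (c) are honest pieces. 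The care will be concentrated in handling the truncated boundary blocks of a piece, the structural asymmetry between the blocks $S_1$ and $S_2$, and the split between the cases $k=1$ and $k\ge 2$; Lemma~\ref{initial} and its Claim are the tools that keep this tracking under control.
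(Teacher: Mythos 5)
Your proposal is correct and follows essentially the same route as the paper's proof: the negative assertions via the fact that $S_1$ (resp.\ $S_2$) occurs only twice in $CS(r)=\lp S_1,S_2,S_1,S_2\rp$ together with Proposition~\ref{meaning_of_S-sequence}(1) to force $w_1\equiv w_2$, and the positive assertions via the symmetry of the $S_i$ (and of $S(\hat u_r)$) combined with Lemma~\ref{initial} to produce two genuinely distinct elements of $R$ with the prescribed common prefix. The points you flag as needing care (complete versus truncated boundary blocks, and upgrading equal $S$-sequences to visual equality) are exactly the ones the paper handles, so no essential idea is missing.
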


\begin{proof}
(1a) \&  (1b) \& (1c)
The proofs are analogous to the proofs of (2a) \&  (2b) \& (2c) below.

(2a) Suppose to the contrary that
there are two distinct cyclic permutations $w_1$ and $w_2$ of $u_r$
or $u_r^{-1}$ such that $w_1$ and $w_2$ have the same beginning
subword $y$, where either $y \equiv v_1$ or $y \equiv v_3$.
Since the cyclic sequence $CS(r)=CS(u_r)$ is symmetric by
Corollary ~\ref{induction2},
the cyclic sequence $CS(u_r^{-1})$ is also equal to $CS(r)$.
Thus the two cyclic words $(w_1)$ and $(w_2)$
have the same associated cyclic sequence $\lp S_1, S_2, S_1, S_2 \rp$,
regardless of whether $w_1$ and $w_2$ are cyclic permutations of
$u_r$ or $u_r^{-1}$. Putting $m=m_1$, note that $\lp S_1, S_2, S_1, S_2 \rp$ is a cyclic
sequence consisting of only $m$ and $m+1$, $S_1$ begins and ends
with $m+1$, and the $S$-sequence of $y$ is $S_1$
(see Proposition ~\ref{properties}).
This
implies that, for each $i=1, 2$, the $S$-sequence $S(w_i)$ begins with
$S_1$, and that the cyclic $S$-sequence $CS(w_i)$ is represented
by $S(w_i)$. Furthermore, since $S_1$ appears only
twice in $\lp S_1, S_2, S_1, S_2 \rp$
by Proposition ~\ref{sequence},
we obtain that $w_1$ and $w_2$ must
have the same associated $S$-sequence $(S_1, S_2, S_1, S_2)$, where
the first $S_1$ corresponds to the common beginning subword $y$.
By Proposition ~\ref{meaning_of_S-sequence}(1),
this implies that $w_1 \equiv w_2$, a contradiction.

(2b)
Suppose to the contrary that there is a piece, $z$,
which is of the form, say $v_{1e} v_2 v_{3b}$,
and let $w_1$ and $w_2$ be distinct cyclic permutations of $u_r$
or $u_r^{-1}$ such that $w_1$ and $w_2$ have the same beginning
subword $z\equiv v_{1e} v_2 v_{3b}$,
namely $w_i\equiv zw_i'\equiv v_{1e} v_2 v_{3b}w_i'$ for some
subword $w_i'$ of
$w_i$. By the construction of the product $v_1v_2v_3v_4$,
the last exponent of $v_{1e}$,
which is equal to the last exponent of $v_1$,
is different from the first exponent of $v_2$.
Consider the cyclic permutation
$\hat w_i:=v_2 v_{3b}w_i'v_{1e}$.
By the observation above,
the cyclic sequence $CS(\hat w_i)$ is represented by $S(\hat w_i)$
(cf. Proposition ~\ref{meaning_of_S-sequence}(2)).
Moreover, since $v_{3b}$ is a nonempty reduced subword of $\hat w_i$
whose initial exponent is different from
the terminal exponent of $v_2$,
the sequence $S(\hat w_i)$ starts with $S(v_2)=S_2$.
Since $S_2$ appears in $CS(\hat w_i)=\lp S_1, S_2, S_1, S_2 \rp$
only twice, we see $S(\hat w_i)=(S_2, S_1, S_2, S_1)$.
This implies
$\hat w_1 \equiv \hat w_2$ by Proposition ~\ref{meaning_of_S-sequence}(1)
and hence $w_1 \equiv w_2$, a contradiction.

(2c) Since every nonempty subword of a piece is also a piece, it is enough to
prove the assertion for $v_{1e} v_2$, $v_2 v_{3b}$, $v_{3e} v_4$, or $v_4 v_{1b}$,
where $v_{ib}$ and $v_{ie}$, respectively,
are the initial and the terminal subwords
of $v_i$ with $|v_{ib}|=|v_{ie}| = |v_i|-1$.

We show that $v_{1e} v_2$ and $v_2 v_{3b}$ are pieces.
To this end, we first show that
$v_{1e} v_2$ and $v_2 v_{3b}$ have the same associated $S$-sequence.
Since the terminal exponent of $v_{i}$ and the initial exponent
of $v_{i+1}$ are different,
$S(v_{1e} v_2)=(S(v_{1e}), S(v_2))$ and
$S(v_2 v_{3b})=(S(v_2), S(v_{3b}))$.
On the other hand, we have $v_{1e} v_2=\hat{u}_r$,
because $u_r \equiv a \hat{u}_r x \hat{u}_r^{-1}$
by Lemma ~\ref{presentation}.
Thus we see $S(v_{1e} v_2)=S(\hat{u}_r)$ is symmetric
by the following claim.

\medskip
\noindent {\bf Claim.} {\it The sequence $S(\hat{u}_r)$ is symmetric.}

\begin{proof} [Proof of Claim.]
Recall that the $i$-th exponent of $\hat{u}_r$ is
given by $\epsilon_i = (-1)^{\lfloor iq/p \rfloor}$
(see Lemma ~\ref{presentation}).
So we have:
\[
\epsilon_{p-i}
=(-1)^{\lfloor (p-i)q/p \rfloor}
=(-1)^{q+\lfloor -iq/p \rfloor}
=
\begin{cases}
\epsilon_i & \quad \mbox{if $q$ is even},\\
-\epsilon_i & \quad \mbox{if $q$ is odd}.
\end{cases}
\]
Hence the sequence $(\epsilon_1,\epsilon_2,\dots,\epsilon_{p-1})$
is symmetric or skew-symmetric according as $q$ is even or odd.
This implies that $S(\hat{u}_r)$ is symmetric.
\end{proof}

\noindent
Hence we have
\[
\begin{aligned}
S(v_{1e} v_2)&=\overleftarrow{S}(v_{1e} v_2)=(\overleftarrow{S}(v_2), \overleftarrow{S}(v_{1e})) \\
&=(S(v_2), S(v_{1b}))=(S(v_2), S(v_{3b}))=S(v_2 v_{3b}).
\end{aligned}
\]
Here, the third identity follows from the fact that
$S(v_1)=S_1$ and $S(v_2)=S_2$ are symmetric
and the fourth identity follows from the fact that
$S(v_1)=S_1=S(v_3)$.

Now, let
$w_1 :=v_{1e} v_2 w_1'$ and $w_2:=v_2 v_{3b}w_2'$ be
cyclic permutations of $u_r$.
Note that
the terminal exponent of $v_2$ and
the initial exponent of $w_1'$ are different,
and that
the terminal exponent of $v_{3b}$ and the initial exponent of $w_2'$
are the same.
Here, the latter assertion follows from the fact that
the last component of $S(v_3)=S_1$ is equal to
$m_1$ or $m_1+1$ according as $k=1$ or $k\ge 2$
(see Proposition ~\ref{properties0})
and hence it is at least $2$.
(Recall that $m_1\ge 2$ or $m_1\ge 1$ according as
$k=1$ or $k\ge 2$.)
Hence $S(w_1) \neq S(w_2)$.
By Lemma ~\ref{initial},
there is a cyclic permutation $\hat{w_2}$ of $u_r$ or $u_r^{-1}$ such that
$\hat{w_2}$ has the same initial letter as $w_1$ and such that $S(\hat{w_2})=S(w_2)$.
Then $w_1$ and $\hat{w_2}$ are distinct cyclic permutations of $u_r$ or $u_r^{-1}$,
since $S(\hat{w_2})=S(w_2) \neq S(w_1)$.
Note, however, that $w_1$ and $\hat{w_2}$ have the same beginning subword $v_{1e} v_2$
(cf. Proposition ~\ref{meaning_of_S-sequence}(1)).
This implies that $v_{1e} v_2$ is a piece.
We can also see that $v_2 v_{3b}$ is a piece by a similar argument.
By using the fact that $v_{3e} v_4=\hat{u}_r^{-1}$,
we can show by a similar argument that
$v_{3e} v_4$ and $v_4 v_{1b}$ are also pieces.
\end{proof}

We now introduce the following definition.
\begin{definition}
\label{def:n-piece}
{\rm
For a positive integer $n$,
a nonempty subword $w$ of the cyclic word $(u_r)$
is called a {\it maximal $n$-piece}
if $w$ is a product of $n$ pieces and
if any subword $w'$ of $u_r$
which properly contains $w$ as an {\it initial} subword
is not a product of $n$-pieces.}
\end{definition}

It should be noted that a maximal $1$-piece $w$
may not be a maximal piece,
because there may exist a piece $w'$ which contains
$w$ as a proper terminal subword.
(Here a nonempty subword $w$ of the cyclic word $(u_r)$
is called a {\it maximal piece}
if $w$ is a piece and
if any subword $w'$ of $u_r$
which properly contains $w$ is not a piece.)
However, every maximal piece is a maximal $1$-piece.

\begin{corollary}
\label{cor:n-piece}
For the relator $u_r \equiv v_1 v_2 v_3 v_4$
with $r=[m_1, m_2, \dots, m_k]$,
let $v_{ib}^*$
be the maximal proper initial subword of $v_i$,
i.e., the initial subword of $v_i$
such that $|v_{ib}^*|=|v_i|-1$ ($i=1,2,3,4$).
Then the following hold,
where $v_{ib}$ and $v_{ie}$ are nonempty initial and terminal subwords of $v_i$
with $|v_{ib}|, |v_{ie}| \le |v_i|-1$, respectively.

\begin{enumerate}[\indent \rm (1)]
\item If $k=1$, then the following hold.

\begin{enumerate}[\rm (a)]

\item The following is the list of all
maximal $1$-pieces of $(u_r)$,
arranged in the order of
the position of the initial letter:
\[
v_{2b}^*, v_{2e}, v_{4b}^*, v_{4e}.
\]

\item The following is the list of all
maximal $2$-pieces of $(u_r)$,
arranged in the order of
the position of the initial letter:
\[
v_2, v_{2e} v_{4b}^*, v_4, v_{4e} v_{2b}^*.
\]

\item The following is the list of all
maximal $3$-pieces of $(u_r)$,
arranged in the order of
the position of the initial letter:
\[
v_2 v_{4b}^*, v_{2e} v_4, v_4 v_{2b}^*,
v_{4e} v_2.
\]
\end{enumerate}

\item If $k \ge 2$, then the following hold.

\begin{enumerate}[\rm (a)]

\item The following is the list of all
maximal $1$-pieces of $(u_r)$,
arranged in the order of
the position of the initial letter:
\[
v_{1b}^*, v_{1e} v_2, v_2 v_{3b}^*, v_{2e} v_{3b}^*,
v_{3b}^*, v_{3e} v_4, v_4 v_{1b}^*, v_{4e} v_{1b}^*.
\]

\item The following is the list of all
maximal $2$-pieces of $(u_r)$,
arranged in the order of
the position of the initial letter:
\[
v_1 v_2, v_{1e} v_2 v_{3b}^*,
v_2 v_{3} v_4,
v_{2e} v_3 v_4,
v_3 v_4,  v_{3e} v_4 v_{1b}^*,
v_4 v_{1} v_2,
v_{4e} v_1 v_2.
\]

\item The following is the list of all
maximal $3$-pieces of $(u_r)$,
arranged in the order of
the position of the initial letter:
\[
v_1 v_2 v_{3b}^*, v_{1e} v_2 v_{3} v_4,
v_2 v_{3} v_4 v_{1b}^*,
v_{2e} v_3 v_4 v_{1b}^*,
v_3 v_4 v_{1b}^*, v_{3e} v_4 v_{1} v_2,
v_4 v_1 v_2 v_{3b}^*,
v_{4e} v_1 v_2 v_{3b}^*.
\]
\end{enumerate}
\end{enumerate}
\end{corollary}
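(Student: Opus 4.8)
The plan is to obtain the corollary as a bookkeeping consequence of Lemma~\ref{maximal_piece}, organized around one structural observation. Since every nonempty subword of a piece is again a piece (the fact already used in the proof of Lemma~\ref{maximal_piece}(2c)), any two pieces that begin at a common position of the cyclic word $(u_r)$ are prefixes of one another. Hence there is a well-defined longest piece $P_j$ starting at each position $j$, and by definition this $P_j$ is exactly the maximal $1$-piece beginning at $j$. More generally, the maximal $n$-piece beginning at $j$ is the longest word starting at $j$ that factors as a product of $n$ pieces, and, being the longest such word, it is unique.

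First I would prove part (a) by computing $P_j$ for every starting position, grouping the positions by the block of $u_r\equiv v_1v_2v_3v_4$ containing $j$ and by whether $j$ is the first letter of that block. For $k\ge 2$ the three clauses of Lemma~\ref{maximal_piece}(2) determine $P_j$ completely. Clause (2a) forbids a piece from containing all of $v_1$ or $v_3$, so a piece starting at the first letter of $v_1$ (resp.\ $v_3$) reaches only $v_{1b}^*$ (resp.\ $v_{3b}^*$). Clause (2b) forbids the pattern $v_{1e}v_2v_{3b}$, so a piece that has already entered $v_2$ from within $v_1$ must halt at the $v_2/v_3$ boundary, giving $v_{1e}v_2$, whereas a piece beginning inside $v_2$ runs on into $v_3$ but, again by (2a), only as far as $v_{3b}^*$. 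Clause (2c) certifies that the words so produced really are pieces. Reading the positions in cyclic order yields precisely the eight types listed in part (a). The case $k=1$ is entirely parallel, with the wall blocks $v_1,v_3$ replaced by $v_2,v_4$ and with the clauses of Lemma~\ref{maximal_piece}(1) in place of those of (2).

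The key to parts (b) and (c) is to reduce maximal $n$-pieces to iterated maximal $1$-pieces. Recording for each $j$ the terminal position $\mathrm{end}(P_j)$, the computation above shows that $\mathrm{end}(P_j)$ never moves backward as $j$ advances through the cyclic order: inside a block it is constant or jumps forward, and it advances across each block boundary. This monotonicity makes the greedy factorization optimal. If $w=q_1\cdots q_n$ is any product of $n$ pieces starting at $j$, then $q_1$ is a prefix of $P_j$, so replacing $q_1$ by the full $P_j$ only advances the start of $q_2\cdots q_n$, which by monotonicity can only advance the position finally reached. Therefore the maximal $n$-piece at $j$ is obtained by laying down $P_j$, then the maximal $1$-piece at the following position, and so on $n$ times. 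Feeding the list of part (a) into this recursion and reading off the concatenations produces the lists in (b) and (c): for instance $v_{1b}^*$ followed by the maximal $1$-piece $v_{1e}v_2$ whose overhang $v_{1e}$ is the final letter of $v_1$ concatenates to $v_1v_2$, and one further step appends $v_{3b}^*$ to give $v_1v_2v_{3b}^*$. Non-extendability of each listed word is then automatic, since the greedy word already realizes the maximal reach attainable with $n$ pieces.

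The main obstacle is the position-by-position determination of the $P_j$ together with the verification that $\mathrm{end}(P_j)$ is monotone: one must check that the walls supplied by clauses (2a) and (2b) (respectively (1a) and (1b) when $k=1$) force each longest piece to terminate exactly where claimed, so that the eight endpoint values appear in the asserted cyclic order. Once this monotonicity is secured, the passage to $n=2,3$ is mechanical, so essentially the whole weight of the argument lies in making the $1$-piece analysis exhaustive and correct.
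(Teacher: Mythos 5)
Your proposal is correct and follows essentially the same route as the paper: the maximal $1$-pieces are read off position by position from Lemma~\ref{maximal_piece}, and the maximal $2$- and $3$-pieces are then obtained by the greedy concatenation of maximal $1$-pieces (the paper states this as the unique decomposition of a maximal $n$-piece into a maximal $(n-1)$-piece followed by a maximal $1$-piece). Your explicit monotonicity argument for $\mathrm{end}(P_j)$ supplies the justification for the greedy step that the paper leaves implicit, but it is the same proof in substance.
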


\begin{proof}
(1a) \&  (1b) \& (1c)
The proofs are analogous to the proofs of (2a) \&  (2b) \& (2c) below.

(2a) This is a direct consequence of Lemma ~\ref{maximal_piece}.

(2b) This is proved by using the fact that
if $w$ is a maximal $2$-piece,
then it has a unique
decomposition $w=w_1w_2$ into two maximal $1$-pieces
$w_1$ and $w_2$.
To be precise,
if $w_1$ is equal to $v_{1b}^*$
(resp. $v_{1e} v_2$, $v_2 v_{3b}^*$, $v_{2e} v_{3b}^*$),
then $w_2$ is equal to $v_{1e} v_2$
(resp. $v_{3b}^*$, $v_{3e} v_4$, $v_{3e} v_4$),
and hence $w=w_1w_2$ is equal to
$v_1 v_2$ (resp. $v_{1e} v_2 v_{3b}^*$,
$v_2 v_{3} v_4$,
$v_{2e} v_3 v_4$).

(2c) This is proved by using the fact that
if $w$ is a maximal $3$-piece,
which is a proper subword of the cyclic word $(u_r)$,
then it has a unique
decomposition $w=w_1w_2$,
where $w_1$ is a maximal $2$-piece and $w_2$ is a maximal $1$-piece.
To be precise,
if $w_1$ is equal to $v_1 v_2$ (resp. $v_{1e} v_2 v_{3b}^*$,
$v_2 v_{3} v_4$,
$v_{2e} v_3 v_4$),
then $w_2$ is equal to $v_{3b}^*$
(resp. $v_{3e} v_4$, $v_{1b}^*$, $v_{1b}^*$),
and hence $w=w_1w_2$ is equal to
$v_1 v_2 v_{3b}^*$
(resp. $v_{1e} v_2 v_{3} v_4$,
$v_2 v_{3} v_4 v_{1b}^*$,
$v_{2e} v_3 v_4 v_{1b}^*$).
\end{proof}

\begin{proof} [Proof of Theorem ~\ref{small_cancellation_condition}]
By Corollary ~\ref{cor:n-piece},
the cyclic word $(u_r)$ is not a product of $3$ pieces.
This implies that the cyclic word
$(u_r^{-1})$ as well is not a product of $3$ pieces.
Hence $R$ satisfies $C(4)$.
So we show that $R$ satisfies $T(4)$.
To this end, recall that
the cyclic word $(u_r)$ is alternating by
Lemma ~\ref{u-word}.
Now suppose that $R$ does not satisfy $T(4)$.
Then there exist $w_1, w_2, w_3\in R$
such that $w_1w_2$, $w_2w_3$ and $w_3w_1$ are reducible.
Let $x^{\epsilon_1}$ be the terminal letter of $w_1$,
where $x\in\{a,b\}$ and $\epsilon_1=\pm 1$.
Then the initial letter of $w_2$ is $x^{-\epsilon_1}$,
because $w_1w_2$ is reducible.
Since $w_2$ is cyclically alternating, this implies that
the terminal letter of $w_2$ is $y^{\epsilon_2}$
for some $\epsilon_2=\pm1$,
where $y$ is the element of $\{a,b\}$ different from $x$.
Similarly, by using the facts that $w_2w_3$ is reducible
and that $w_3$ is cyclically alternating,
we see that the terminal letter of $w_3$ is $x^{\epsilon_3}$
for some $\epsilon_3=\pm1$.
Since $w_3w_1$ is reducible,
this implies that the initial letter of $w_1$ is $x^{-\epsilon_3}$.
However, this contradicts the fact that
$w_1$ is
cyclically
alternating, because
the terminal letter of $w_1$ was $x^{\epsilon_1}$.
Hence $R$ satisfies $T(4)$.
\end{proof}

\section{Van Kampen diagrams over 2-bridge link groups}
\label{van_Kampen_diagrams}

In this section, we investigate the geometric consequences of
Theorem ~\ref{small_cancellation_condition}.
Let us begin with necessary definitions and notation following \cite{lyndon_schupp}.
A {\it map} $M$ is a finite $2$-dimensional cell complex
embedded in $\RR^2$,
namely a finite collection of vertices ($0$-cells), edges ($1$-cells),
and faces ($2$-cells) in $\RR^2$.
The boundary (frontier) of $M$ in $\RR^2$
is denoted by $\partial M$.
If $D$ is a face of $M$, the boundary of $D$ is denoted by $\partial D$.
An edge may be traversed in either of two directions.
If $v$ is a vertex of
$M$, $d_M(v)$, the {\it degree of $v$}, will
denote the number of oriented edges in $M$ having $v$ as initial vertex.
A vertex $v$ of $M$ is called an {\it interior vertex}
if $v\not\in \partial M$, and an edge $e$ of $M$ is called
an {\it interior edge} if $e\not\subset \partial M$.

\begin{definition}
{\rm A nonempty map $M$ is called a {\it $[p, q]$-map} if the following conditions hold.
\begin{enumerate}[\indent \rm (1)]
\item Every interior vertex of $M$
has degree at least $p$.
\item Every face $D$ of $M$ has at least $q$ edges in $\partial D$.
\end{enumerate}
}
\end{definition}

A {\it path} in $M$ is a sequence of oriented edges $e_1, \dots, e_n$ such that
the initial vertex of $e_{i+1}$ is the terminal vertex of $e_i$ for
every $1 \le i \le n-1$. A {\it cycle} is a closed path, namely
a path $e_1, \dots, e_n$
such that the initial vertex of $e_1$ is the terminal vertex of $e_n$.
If $D$ is a face of $M$, any cycle of minimal length which includes
all the edges of $\partial D$ is called a {\it boundary cycle} of $D$.
If $M$ is connected and simply connected, a {\it boundary cycle} of $M$
is defined to be a cycle of minimal length which contains all the edges of
$\partial M$ going around once
along the boundary of $\mathbb{R}^2 -M$.

\begin{definition}
{\rm Let $R$ be a symmetrized subset of $F(X)$. An {\it $R$-diagram} is
a map $M$ and a function $\phi$ assigning to each oriented edge $e$ of $M$, as a {\it label},
a reduced word $\phi(e)$ in $X$ such that the following hold.
\begin{enumerate}[\indent \rm (1)]
\item If $e$ is an oriented edge of $M$ and $e^{-1}$ is the oppositely oriented edge, then $\phi(e^{-1})=\phi(e)^{-1}$.

\item For any boundary cycle $\delta$ of any face of $M$,
$\phi(\delta)$ is a cyclically reduced word
representing an element of $R$.
(If $\alpha=e_1, \dots, e_n$ is a path in $M$, we define $\phi(\alpha) \equiv \phi(e_1) \cdots \phi(e_n)$.)
\end{enumerate}
In particular, if a group $G$ is presented by $G=\langle X \,|\, R \, \rangle$ with $R$ being symmetrized,
then a connected and simply connected $R$-diagram is called a {\it van Kampen diagram}
over the group presentation $G=\langle X \,|\, R \, \rangle$.
}
\end{definition}

Let $D_1$ and $D_2$ be faces (not necessarily distinct) of $M$
with an edge $e \subseteq \partial D_1 \cap \partial D_2$.
Let $e \delta_1$ and $\delta_2e^{-1}$ be boundary cycles of $D_1$ and $D_2$, respectively.
Let $\phi(\delta_1)=f_1$ and $\phi(\delta_2)=f_2$.
An $R$-diagram $M$ is called {\it reduced} if one never has $f_2=f_1^{-1}$.
It should be noted that if $M$ is reduced
then $\phi(e)$ is a piece for every interior edge $e$ of $M$.
A {\it boundary label of $M$} is defined to be a word $\phi(\alpha)$ in $X$ for $\alpha$ a boundary cycle of $M$.
It is easy to see that any two boundary labels of $M$ are cyclic permutations of each other.

We recall the following lemma which is a well-known
classical result in combinatorial group theory
(see \cite{lyndon_schupp}).

\begin{lemma} [van Kampen]
\label{van_Kampen}
Suppose $G=\langle X \,|\, R \, \rangle$ with $R$ being symmetrized.
Let $v$ be a word in $X$. Then $v=1$ in $G$ if and only if
there exists a reduced van Kampen diagram $M$
over $G=\langle X \,|\, R \, \rangle$
with a boundary label $v$.
\end{lemma}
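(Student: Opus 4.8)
The plan is to prove the two implications separately, following the classical strategy. Throughout, write $N=\llangle R\rrangle$ for the normal closure of $R$ in $F(X)$, so that $G=F(X)/N$ and the assertion ``$v=1$ in $G$'' means precisely $v\in N$. I would begin with the easier \emph{if} direction, where reducedness of $M$ is in fact not needed. Given a van Kampen diagram $M$ with boundary label $v$, fix a basepoint $O$ on $\partial M$ and enumerate the faces $D_1,\dots,D_n$. Using that $M$ is connected and simply connected, choose for each face $D_j$ a path $\gamma_j$ in the $1$-skeleton from $O$ to a vertex of $\partial D_j$, let $\delta_j$ be a boundary cycle of $D_j$, and set $b_j:=\phi(\gamma_j\,\delta_j\,\gamma_j^{-1})$. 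Condition (2) in the definition of an $R$-diagram gives $\phi(\delta_j)\in R$, so each $b_j\in N$. A standard cutting argument, peeling the faces off one at a time along the paths $\gamma_j$ (possible exactly because $M$ is simply connected and the complement of $M$ in $\RR^2$ is connected), shows that $v$, read as a boundary label based at $O$, equals the product $b_1b_2\cdots b_n$ in $F(X)$. Hence $v\in N$, i.e. $v=1$ in $G$.

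For the \emph{only if} direction, suppose $v=1$ in $G$, so $v\in N$ and we may write
\[
v=\prod_{i=1}^{n} g_i\, r_i\, g_i^{-1}\quad\text{in }F(X),
\]
with each $r_i\in R$ and each $g_i\in F(X)$. From this I would build an initial diagram $M_0$ of \emph{lollipop} (or \emph{flower}) type: take a basepoint $O$, and for each $i$ attach a \emph{stem}, an arc subdivided and labeled so as to spell $g_i$, terminating in a \emph{petal}, a $2$-cell whose boundary cycle spells $r_i$. Arranging the $n$ lollipops cyclically about $O$ in the order of the product yields a connected, simply connected planar complex whose boundary label, read from $O$, is the (possibly unreduced) word $g_1r_1g_1^{-1}\cdots g_nr_ng_n^{-1}$. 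Folding together adjacent stem edges carrying mutually inverse labels, which is just the geometric realization of the free reductions that turn this word into the reduced word $v$, produces a van Kampen diagram with boundary label exactly $v$.

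It remains to make $M_0$ reduced. If it is not, there is an interior edge $e\subseteq\partial D_1\cap\partial D_2$ whose two incident faces carry boundary labels that are mutual inverses, i.e. a \emph{cancelling pair}. I would excise this pair by surgery: delete the open edge $e$, amalgamate $D_1$ and $D_2$, and then cancel the matching portions of their boundaries so that the merged region collapses, reattaching any pieces that become separated along a path to the basepoint. Each such move strictly decreases the number of faces while preserving planarity, connectivity, simple connectivity, and the boundary label up to free reduction. Since the face count is a nonnegative integer, the process terminates after finitely many steps at a reduced van Kampen diagram with boundary label $v$, completing the argument.

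I expect \textbf{the delicate point} to be this reduction step: one must check that excising a cancelling pair can always be performed so that the ambient complex remains a planar, connected, simply connected map with an unchanged boundary label. The difficulty concentrates in the geometry of how the two cancelling faces meet, in particular when they share several edges or when the excision threatens to disconnect the diagram; this is precisely the bookkeeping carried out in detail in \cite{lyndon_schupp}, to which the routine verifications can be deferred.
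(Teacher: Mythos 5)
The paper does not actually prove this lemma: it is stated as a ``well-known classical result'' and deferred entirely to Lyndon--Schupp, so there is no in-paper argument to compare against. Your sketch is the standard proof from that source, and it is correct in outline: the \emph{if} direction by peeling faces off a simply connected diagram to exhibit the boundary label as a product of conjugates of relators, and the \emph{only if} direction by assembling a lollipop diagram from an expression $v=\prod g_i r_i g_i^{-1}$, folding, and then excising cancelling pairs of faces. You have also correctly identified where the real work lies, namely the verification that removing a cancelling pair preserves planarity, connectivity, simple connectivity, and the boundary label; that is precisely the bookkeeping carried out in Lyndon--Schupp (Chapter V), and deferring it is no worse than what the paper itself does. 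Two small points worth keeping in mind: first, the folding and reduction steps naturally produce a diagram whose boundary label is the free reduction of the word you started with, so the statement is cleanest when $v$ is (cyclically) reduced --- which is the case in the paper's only application, where $v=u_s$; second, the definition of ``reduced'' in the paper allows the two faces of a cancelling pair to coincide, so your surgery step must also handle a face adjacent to itself across an interior edge, another case treated in the cited reference.
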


\begin{convention}
\label{convention}
{\rm Let $R$ be the symmetrized subset of $F(a, b)$
generated by the single relator $u_r$ of the group presentation of $G(K(r))$.
For any reduced $R$-diagram $M$,
we assume that $M$ satisfies the following.
\begin{enumerate}[\indent \rm (1)]
\item Every interior vertex of $M$ has degree at least three.

\item For every edge $e$ of $\partial M$,
the label $\phi(e)$ is a piece.

\item For a path $e_1, \dots, e_n$ in $\partial M$ of length $n\ge 2$
such that the vertex $e_i\cap e_{i+1}$ has degree $2$
for $i=1,2,\dots, n-1$,
$\phi(e_1) \phi(e_2) \cdots\phi(e_n)$ cannot be expressed as a product of less than $n$ pieces.
\end{enumerate}
Indeed, we may assume (1), because if there are two interior edges $e_1$ and $e_2$ meeting in
an interior vertex
of degree two, then we can delete the vertex $v$ and unite $e_1$ and $e_2$ into a single edge $e$
with label $\phi(e)=\phi(e_1)\phi(e_2)$.
To see (2), recall that the assumption that $M$ is reduced
implies that $\phi(e)$ is a piece for every interior edge $e$ of $M$.
On the other hand, since the cyclic word $(u_r)$ can be written as a product of pieces,
we may also assume that $\phi(e)$ is a piece for every edge $e$ in $\partial M$.
Finally, we may assume (3),
because if
$\phi(e_1) \cdots\phi(e_n)$
is expressed as a product of less than $n$ pieces,
then we can change the cellular structure
of the interval
$e_1\cup \cdots \cup e_n$
so that the new cellular structure has fewer vertices
compared with the original one.
}
\end{convention}

For the remainder of this section,
we assume that $r$ is a rational number such that $0 < r< 1$, and let
$\langle a, b \, |\, u_r \rangle$ be the presentation of $G(K(r))$
given in Section ~\ref{group_presentation}.

The following corollary is immediate from Theorem ~\ref{small_cancellation_condition}
and Convention ~\ref{convention}.

\begin{corollary}
\label{small_cancellation_condition_2}
Let $R$ be the symmetrized subset of $F(a, b)$
generated by the single relator $u_r$ of the group presentation of $G(K(r))$.
Then every reduced $R$-diagram is a $[4, 4]$-map.
\end{corollary}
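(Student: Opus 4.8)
The plan is to verify the two defining conditions of a $[4,4]$-map separately, translating the combinatorial conditions $C(4)$ and $T(4)$ of Theorem ~\ref{small_cancellation_condition} into the geometric language of the reduced $R$-diagram $M$. This is the standard dictionary between small cancellation hypotheses and diagram conditions from \cite{lyndon_schupp}, and the point is that Convention ~\ref{convention} has been arranged precisely so that the translation goes through; thus I expect the proof to be short, with all the substance concentrated in the interior-vertex estimate.

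First I would dispatch the face condition, that every face $D$ has at least four edges in $\partial D$, using $C(4)$. Let $\delta$ be a boundary cycle of $D$, so $\phi(\delta)$ is a cyclically reduced word representing an element of $R$. Each edge $e$ of $\partial D$ carries a piece as its label: if $e$ is interior this is forced by the reducedness of $M$, and if $e \subset \partial M$ it is guaranteed by Convention ~\ref{convention}(2). Writing $\partial D$ as its sequence of $n$ edges therefore expresses $\phi(\delta)$ as a product of $n$ pieces; since $\phi(\delta) \in R$ and $R$ satisfies $C(4)$, we get $n \ge 4$, which is exactly the required bound.

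The more delicate step, and the one I expect to be the main obstacle, is the interior-vertex condition: every interior vertex must have degree at least $4$. By Convention ~\ref{convention}(1) each interior vertex already has degree at least $3$, so it suffices to rule out degree exactly $3$. Suppose $v$ is an interior vertex of degree $3$, with outgoing edges $e_1,e_2,e_3$ in cyclic order and incident faces $D_1,D_2,D_3$, where $D_i$ occupies the sector between $e_i$ and $e_{i+1}$ (indices mod $3$); all three edges are interior, so their labels are pieces. Orienting the $\partial D_i$ coherently, I would take based boundary cycles $\gamma_i$ that leave $v$ along $e_i$ and return along $e_{i+1}^{-1}$, and set $w_i=\phi(\gamma_i)\in R$. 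Then $w_i$ begins with $\phi(e_i)$ and ends with $\phi(e_{i+1})^{-1}$, so each of the three cyclic products $w_1w_2$, $w_2w_3$, $w_3w_1$ contains the cancelling juxtaposition $\phi(e_{i+1})^{-1}\phi(e_{i+1})$ at $v$; in particular none of them is freely reduced without cancellation.

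To invoke $T(4)$ and reach a contradiction I must check its hypothesis that no successive pair $w_i,w_{i+1}$ is an inverse pair, and this is exactly where reducedness of $M$ enters. Reading the boundary cycles of the two faces $D_i$ and $D_{i+1}$ sharing the edge $e_{i+1}$ in the forms $e_{i+1}\delta_i$ and $\delta_{i+1}e_{i+1}^{-1}$, the equation $w_{i+1}=w_i^{-1}$ unwinds to $\phi(\delta_{i+1})=\phi(\delta_i)^{-1}$, which is precisely the configuration forbidden in the definition of a reduced $R$-diagram (and the fact that the definition allows $D_i,D_{i+1}$ to coincide covers the degenerate cases). Hence reducedness forces $w_{i+1}\ne w_i^{-1}$ for each $i$, the hypothesis of $T(4)$ holds, and $T(4)$ then guarantees that at least one of $w_1w_2,w_2w_3,w_3w_1$ is freely reduced without cancellation, contradicting the preceding paragraph. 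Therefore no interior vertex of degree $3$ exists, every interior vertex has degree at least $4$, and $M$ is a $[4,4]$-map. The only genuinely fiddly point is the orientation bookkeeping needed to match the shared-edge traversals to the reducedness definition, but this is routine once the based boundary cycles $\gamma_i$ are fixed.
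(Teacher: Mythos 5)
Your proof is correct and is exactly the standard Lyndon--Schupp translation of $C(4)$ into the face condition and $T(4)$ into the interior-vertex condition that the paper invokes when it declares the corollary ``immediate from Theorem~\ref{small_cancellation_condition} and Convention~\ref{convention}'' (the paper itself supplies no further argument, and even records separately the fact you use that interior edges of a reduced diagram are labelled by pieces). Your writeup simply makes explicit the details the paper leaves to the reader, including the correct use of reducedness to verify the no-inverse-pair hypothesis of $T(4)$.
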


This corollary enables us to apply the Curvature Formula of Lyndon and Schupp
for $[p, q]$-maps satisfying ${1/p}+{1/q}={1/2}$ (see \cite{lyndon_schupp})
to obtain the following theorem,
the proof of which is deferred to the end of this section.

\begin{theorem}
\label{half}
Let $R$ be the symmetrized subset of $F(a, b)$ generated by the single relator $u_r$
of the group presentation of $G(K(r))$. Suppose that $M$ is a reduced van Kampen diagram over
$G(K(r))=\langle a, b \, | \, R \, \rangle$ such that any boundary label of $M$ is cyclically reduced
and alternating.
Then some boundary label of $M$ contains a subword $w$ of $(u_r^{\pm 1})$
such that the $S$-sequence of $w$ is $(S_1, S_2, \ell)$ or $(\ell, S_2, S_1)$
for some positive integer $\ell$,
where $S(r)= (S_1, S_2, S_1, S_2)$
is as in Proposition ~\ref{sequence}.
\end{theorem}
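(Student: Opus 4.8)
The plan is to separate a clean combinatorial reduction from the analytic heart of the argument. Write $u_r\equiv v_1v_2v_3v_4$ with $v_1,v_3\leftrightarrow S_1$ and $v_2,v_4\leftrightarrow S_2$, as in Section~\ref{small_cancellation}, and recall from Convention~\ref{convention} that every edge of $M$ is labelled by a piece and that each boundary arc is already cellulated into the minimal number of pieces. Call a contiguous arc lying on $\partial M$ and belonging to a single face \emph{long} if, regarded as a subword of $u_r^{\pm1}$, it cannot be written as a product of fewer than three pieces. The first step is to observe that any long arc already contains a subword whose $S$-sequence is $(S_1,S_2,\ell)$ or $(\ell,S_2,S_1)$ for some positive integer~$\ell$. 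Thus the theorem reduces to the single assertion that \emph{some} face of $M$ carries a long arc.

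I would establish this reduction directly from the maximal-piece classification. If an arc is minimally a product of at least three pieces, then it properly extends the maximal $2$-piece beginning at its initial letter, hence it strictly overruns one of the words listed in Corollary~\ref{cor:n-piece}(2b) (or (1b) when $k=1$). Running through that list, each proper extension, by even a single letter, completes a full block pattern: extending $v_1v_2$, $v_3v_4$, $v_2v_3v_4$, $v_{2e}v_3v_4$, $v_4v_1v_2$ or $v_{4e}v_1v_2$ exposes a subword of $S$-sequence $(S_1,S_2,\ell)$, while extending $v_{1e}v_2v_{3b}^*$ or $v_{3e}v_4v_{1b}^*$ completes the ensuing $S_1$ and exposes a subword of $S$-sequence $(\ell,S_2,S_1)$ (using that $S_1$ begins and ends with $m+1$ and $S_2$ with $m$, by Proposition~\ref{sequence}). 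This step I expect to be routine.

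For the existence of a long arc I would invoke the Curvature Formula and argue by contradiction, assuming that every maximal exposed run of every face consists of at most two pieces. By Corollary~\ref{small_cancellation_condition_2}, $M$ is a reduced $[4,4]$-map, and since $\tfrac14+\tfrac14=\tfrac12$ every interior vertex (of degree $\ge 4$) and every interior face (with at least four sides) receives non-positive combinatorial curvature; the discrete Gauss--Bonnet identity then places the entire positive curvature $2\pi$ of the disk on the boundary. Under the standing assumption each boundary face is forced to carry many interior edges relative to its exposed edges, so the positive contribution of the boundary is tightly constrained.

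The main obstacle is precisely the borderline of this curvature count: a boundary face exposing two consecutive edges can still be positively curved, and a naive estimate does not strictly exceed $2\pi$, the Euclidean square-grid realizing equality. This is where the hypothesis that every boundary label of $M$ is cyclically reduced and alternating becomes decisive. In the grid, adjacent exposed arcs meet with cancellation, so its boundary label is \emph{not} cyclically reduced; imposing cyclic reducedness (together with the alternating property and Lemma~\ref{u-word}) forbids such cancelling junctions and pins down the degrees and turnings at the boundary vertices where two faces meet. Quantifying this restriction so as to upgrade the non-strict Euclidean inequality to a strict one — thereby ruling out any diagram all of whose exposed runs have length at most two, and producing a long arc — is the technical heart of the proof; once such an arc is found, the combinatorial reduction of the first two paragraphs completes the argument.
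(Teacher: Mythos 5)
Your reduction step is essentially the paper's: once a single face of $M$ exposes three consecutive boundary edges, Convention~\ref{convention}(2)--(3) guarantees that their label cannot be written as a product of fewer than three pieces, so it properly extends one of the maximal $2$-pieces listed in Corollary~\ref{cor:n-piece}(2b) (or (1b) when $k=1$), and running through that list yields a subword with $S$-sequence $(S_1,S_2,\ell)$ or $(\ell,S_2,S_1)$. That part is correct and adequately argued.

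The genuine gap is the existence step, which you yourself label ``the technical heart'' and then leave undone. Moreover, your diagnosis of the difficulty is off: there is no delicate strict-versus-non-strict curvature estimate to make. The missing observation is that the hypotheses on the boundary label eliminate the bad vertex degrees outright. Cyclic reducedness forbids degree-$1$ vertices on $\partial M$, and the alternating hypothesis forbids degree-$3$ vertices on $\partial M$: at such a vertex the three edge-germs would have to carry pairwise-alternating generators around the vertex (each of the two face corners forces an alternation because every face label is a cyclic permutation of the alternating word $u_r^{\pm1}$, and the outer corner forces one because the boundary label is alternating), which is a proper $2$-colouring of a $3$-cycle and hence impossible. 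Thus every boundary vertex has degree $2$ or at least $4$. Now apply the Lyndon--Schupp curvature formula $\sum_{v\in\partial J}(3-d_J(v))\ge 4$ to an \emph{extremal disk} $J$ of $M$ --- a passage you omit entirely, and which is needed since $M$ need not be a topological disk; the attaching vertex $v_0$ satisfies $d_J(v_0)=d_M(v_0)-1\ge 3$ and contributes non-positively. Degree-$2$ boundary vertices contribute $+1$ and degree-$\ge 4$ ones at most $-1$, so the former outnumber the latter by at least $4$, and going around the boundary cycle this forces two \emph{adjacent} degree-$2$ vertices, i.e.\ three consecutive exposed edges of a single face. Your square-grid worry dissolves here: the grid has degree-$3$ boundary vertices, which the alternating hypothesis has already excluded, so there is no borderline case left to break. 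Without this vertex-degree argument your proposal does not establish the existence of a long arc, and the proof is incomplete.
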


By Lemma ~\ref{van_Kampen}, we obtain the following important corollary
which is the main result of this section.

\begin{corollary}
\label{existence_condition}
Let $s$ be a rational number such that  $0< s\le 1$ and
that $\alpha_s$ is null-homotopic in $S^3-K(r)$.
Then the cyclic $S$-sequence $CS(s)$ contains
$(S_1,S_2)$ or $(S_2,S_1)$ as a subsequence,
where $S(r)= (S_1, S_2, S_1, S_2)$
is as in Proposition ~\ref{sequence}.
\end{corollary}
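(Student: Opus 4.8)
The plan is to translate the null-homotopy of $\alpha_s$ into the existence of a reduced van Kampen diagram via Lemma~\ref{van_Kampen}, and then feed that diagram into Theorem~\ref{half} to extract the desired subsequence of $CS(s)$. First I would observe that $\alpha_s$ being null-homotopic in $S^3-K(r)$ means exactly that the element of $G(K(r))=\langle a,b\,|\,u_r\rangle$ represented by the cyclic word $(u_s)$ is trivial. Since $0<s\le 1$, Lemma~\ref{u-word} guarantees that $u_s$ is alternating and cyclically reduced, so the cyclic word $(u_s)$ is cyclically reduced and alternating. By Lemma~\ref{van_Kampen}, the triviality of $u_s$ in $G(K(r))$ yields a reduced van Kampen diagram $M$ over the presentation $\langle a,b\,|\,R\rangle$ with a boundary label equal to $u_s$ (up to cyclic permutation).

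Next I would record that because the boundary label $u_s$ is cyclically reduced and alternating, the hypotheses of Theorem~\ref{half} are met: the diagram $M$ is a reduced van Kampen diagram whose boundary labels are cyclically reduced and alternating. Here I should be slightly careful about one degenerate possibility. If $M$ has no faces at all, then $u_s$ would be trivial in the free group $F(a,b)$, which is impossible since $u_s$ is cyclically reduced and nonempty (indeed $s\neq\infty$ forces $u_s\neq 1$ by Remark~\ref{epsilon}(2) and Lemma~\ref{u-word}). So $M$ genuinely contains a face, and Theorem~\ref{half} applies nontrivially.

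Applying Theorem~\ref{half} then gives a boundary label of $M$ that contains a subword $w$ of $(u_r^{\pm 1})$ whose $S$-sequence is $(S_1,S_2,\ell)$ or $(\ell,S_2,S_1)$ for some positive integer $\ell$. The crucial point is that this $w$ is simultaneously a subword of the boundary label, and every boundary label of $M$ is a cyclic permutation of $u_s$. Therefore $w$ is a subword of the cyclic word $(u_s)$. Reading off $S$-sequences, the fact that $S(w)$ equals $(S_1,S_2,\ell)$ (resp.\ $(\ell,S_2,S_1)$) means that the cyclic $S$-sequence $CS(u_s)=CS(s)$ contains $(S_1,S_2)$ (resp.\ $(S_2,S_1)$) as a subsequence, since the initial portion $(S_1,S_2)$ (resp.\ the terminal portion $(S_2,S_1)$) of $S(w)$ is a consecutive block of the $S$-sequence of a subword of $(u_s)$.

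\emph{The main obstacle} I anticipate is the bookkeeping in this last step: passing correctly between the $S$-sequence of the \emph{word} $w$ and the \emph{cyclic} $S$-sequence $CS(s)$ of the cyclic word $(u_s)$. One must verify that the block $(S_1,S_2)$ sitting inside $S(w)$ does not straddle a boundary between maximal-exponent runs in a way that splits a term, i.e.\ that the decomposition of $(u_s)$ into alternating-sign syllables restricts compatibly to the subword $w$. Since $w$ is a genuine subword of the alternating cyclic word $(u_s)$ and $(S_1,S_2)$ is an interior block of its syllable-length sequence, the terms of $(S_1,S_2)$ are honest consecutive terms of $CS(s)$, except possibly at the two ends where a syllable of $w$ might be a proper sub-syllable of $(u_s)$. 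The extra term $\ell$ in $S(w)$ is precisely the buffer that absorbs this end effect, ensuring that the full block $(S_1,S_2)$ (or $(S_2,S_1)$) lies in the interior and hence appears verbatim as a subsequence of $CS(s)$. Making this buffering argument precise is the one place demanding genuine care, and I would state it explicitly rather than leaving it to the reader.
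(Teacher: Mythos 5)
Your proposal follows essentially the same route as the paper: Lemma~\ref{van_Kampen} converts the null-homotopy of $\alpha_s$ into a reduced van Kampen diagram with boundary label $u_s$, Theorem~\ref{half} produces the subword $w$ with $S(w)=(S_1,S_2,\ell)$ or $(\ell,S_2,S_1)$, and one then reads off the subsequence of $CS(s)$. The only place where your sketch is not quite right is the ``buffering'' step: the extra term $\ell$ protects only the end of the block adjacent to it, whereas the opposite end of $(S_1,S_2)$ (the initial term of $S_1$, equal to $m_1+1$) coincides with an end term of $S(w)$ and so could a priori be a proper piece of a longer syllable of $(u_s)$. The paper closes this by invoking Proposition~\ref{properties} for $s$: since the interior terms of $S(w)$ already exhibit both $m_1$ and $m_1+1$ as genuine syllable lengths of $(u_s)$, the first partial quotient of $s$ must equal $m_1$, so every term of $CS(s)$ is $m_1$ or $m_1+1$; hence the syllable containing the unbuffered end has length exactly $m_1+1$ and cannot properly extend the one seen in $w$. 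Adding that observation makes your argument complete and identical in substance to the paper's.
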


In the above corollary (and throughout this paper),
we mean by a {\it subsequence}
a subsequence without leap.
Namely a sequence $(a_1,a_2,\dots, a_p)$
is called a {\it subsequence} of a cyclic sequence,
if there is a sequence $(b_1,b_2,\dots, b_n)$
representing the cyclic sequence
such that $p\le n$ and
$a_i=b_i$ for $1\le i\le p$.

\begin{proof}
Let $R$ be the symmetrized subset of $F(a, b)$ generated by $u_r$.
Since $\alpha_s$ is null-homotopic in $S^3-K(r)$,
the cyclic word $(u_s)$ obtained from $\alpha_s$ (as in Section ~3)
represents the trivial element of $G(K(r))=\langle a, b \, | \, R \, \rangle$.
Thus, by Lemma ~\ref{van_Kampen}, there is a reduced van Kampen diagram $M$ over
$G(K(r))=\langle a, b \, | \, R \, \rangle$ with a boundary label $u_s$.
Since $u_s$ is cyclically reduced and the cyclic word $(u_s)$ is
alternating,
Theorem ~\ref{half} implies that
the cyclic word $(u_s)$ contains a subword $w$ such that
the $S$-sequence of $w$ is $(S_1, S_2, \ell)$ or
$(\ell, S_2, S_1)$ for some positive integer $\ell$.
Recall that
$S_1$ begins and ends with $m_1+1$, and $S_2$ begins and ends with $m_1$
(see Proposition ~\ref{sequence}).
Thus $(S_1, S_2, \ell)$ is of the form $(m_1+1,\dots,m_1,\ell)$
and $(\ell, S_2, S_1)$ is of the form $(\ell,m_1,\dots,m_1+1)$.
By Proposition ~\ref{properties}, this yields
that $CS(s)=CS(u_s)$ (cf. Definition ~\ref{def4.1(3)})
consists of $m_1$ and $m_1+1$ and that $CS(s)$ contains
$(S_1, S_2)$ or $(S_2, S_1)$ as a subsequence.
\end{proof}

The remainder of this section is devoted to the proof of Theorem ~\ref{half}.
An {\it extremal disk} of a map $M$ is a submap $J$ of $M$ which is topologically a disk
and which has a boundary cycle $e_1, \dots, e_n$
such that the edges $e_1, \dots, e_n$ occur in order in some boundary cycle of the whole map $M$.
We note that
if $J$ is an extremal disk of $M$, then either $J=M$
or $J$ is connected to the rest of $M$ by a single vertex.

\begin{proof} [Proof of Theorem ~\ref{half}]
By Corollary ~\ref{small_cancellation_condition_2},
a reduced van Kampen diagram $M$ over $G(K(r))=\langle a, b \, | \, R \, \rangle$ is a $[4,4]$-map.
Since a boundary label of $M$ is cyclically reduced, there is no vertex of degree $1$ in $\partial M$.
Moreover, since
any boundary label of $M$ is alternating,
there is no vertex of degree $3$ in $\partial M$. So
every vertex in $\partial M$ must have degree $2$ or at least $4$.

Choose an extremal disk, say $J$, of $M$.

\medskip
\noindent
{\bf Claim.} {\it There are three edges $e_1$, $e_2$ and $e_3$ in $\partial J$ such that $e_1 \cap e_2=\{v_1\}$
and $e_2 \cap e_3=\{v_2\}$, where $d_J(v_i)=2$ for each $i=1, 2$.}

\begin{proof} [Proof of Claim] Clearly $J$ is a connected and simply connected $[4, 4]$-map
having at least one face. By the Curvature Formula of
Lyndon and Schupp (see ~\cite[Corollary ~V.3.4]{lyndon_schupp}),
we have
\[ \tag{\dag} \label{curvature_formula}
\sum_{v \in \, \partial J} (3-d_J(v)) \ge 4.
\]
Putting
\[
A=\{v \in \partial J \, | \, d_J(v)=2\} \quad \text{and} \quad
B=\{v \in \partial J \, | \, d_J(v) \ge 4\},
\]
it is easy to see that $A$ has at least $4$ more elements than $B$ does
in order to satisfy inequality ~(\ref{curvature_formula}).
Since $J$ is an extremal disk of $M$, either $J=M$
or it is connected to the rest of $M$ by a single vertex.
If $J=M$, then every vertex in $\partial J=\partial M$ belongs to either $A$ or $B$.
On the other hand, if $J$ is connected to the rest of $M$ by a single vertex,
say $v_0$, then every vertex in $\partial J$ except $v_0$ belongs to either $A$ or $B$
and $d_J(v_0)=d_M(v_0)-1 \ge 3$ (note that $d_M(v_0) \ge 4$, since $v_0 \in \partial M$).
In either case, we see that there are at least $2$ adjacent vertices,
say $v_1$ and $v_2$, belonging to $A$.
This proves the claim.
\end{proof}

By Claim, there is a face $D$ in $M$ such that $\partial D \cap \partial M$ contains three successive edges
$e_1$, $e_2$ and $e_3$.
By Convention ~\ref{convention}(2)--(3), the product $\phi(e_1) \phi(e_2) \phi(e_3)$
which is a subword of the cyclic word $(u_r^{\pm 1})$ cannot be expressed as a product of less than $3$ pieces.
We may assume
without loss of generality that
$\phi(e_1) \phi(e_2) \phi(e_3)$ is a subword of
the cyclic word $(u_r)$.
We also assume that the length $k$
of the continued fraction $r=[m_1,m_2, \dots,m_k]$
is greater than $1$.
(The proof for the case $k=1$ is analogous to the proof
for the general case $k\ge 2$.)
Let $w_0$
be the maximal $2$-piece which forms a proper initial
subword of $\phi(e_1) \phi(e_2) \phi(e_3)$.
Then $w_0$
is equal to one of the words in
Corollary ~\ref{cor:n-piece}(2b)
If $w_0$
is equal to $v_1 v_2$ or $v_{1e} v_2 v_{3b}^*$, then
$\phi(e_1) \phi(e_2) \phi(e_3)$ contains
a subword $w$
such that the $S$-sequence of
$w$ is $(S_1,S_2,\ell)$ or
$(\ell, S_2, S_1)$ accordingly,
for some positive integer $\ell$.
The remaining
possibilities for $w_0$
can be treated similarly
and we obtain Theorem ~\ref{half}.
\end{proof}

\section{Proof of Main Theorem ~\ref{main_theorem}}
\label{Proof_of_MainTheorem}

In this section, we prove the only if part of Main Theorem ~\ref{main_theorem}.
The if part is Proposition ~\ref{Prop:Knotgroup}
(\cite[Corollary ~4.7]{Ohtsuki-Riley-Sakuma}).
Though the proof of
the main theorem for the trivial knot $K(0/1)$ and the trivial $2$-component link $K(1/0)$ is easy,
we need to treat them separately.
We defer these
to the end of this section, and
we assume, until the final part of this section,
that the slope $r$ of the $2$-bridge link $K(r)$
satisfies the condition $0<r<1$ (cf. Theorem ~\ref{Thm:Schubert})
and that $r=[m_1, m_2, \dots, m_k]$, where $k \ge 1$,
$(m_1, \dots, m_k) \in (\mathbb{Z}_+)^k$, and $m_k \ge 2$.

Recall that the region $R$ bounded by a pair of
Farey edges with an endpoint $\infty$
and a pair of edges with an endpoint $r$
forms a fundamental domain of the action of $\RGPP{r}$ on $\HH^2$
(see Figure ~\ref{fig.fd}).
Let $I_1$ and $I_2$ be the closed intervals in $\RRR$
obtained as the intersection with $\RRR$ of the closure of $R$.
To be precise, $I_1=[0,r_1]$ and $I_2=[r_2,1]$,
where
\begin{align*}
r_1 &=
\begin{cases}
[m_1, m_2, \dots, m_{k-1}] & \mbox{if $k$ is odd,}\\
[m_1, m_2, \dots, m_{k-1}, m_k-1] & \mbox{if $k$ is even,}
\end{cases}\\
r_2 &=
\begin{cases}
[m_1, m_2, \dots, m_{k-1}, m_k-1] & \mbox{if $k$ is odd,}\\
[m_1, m_2, \dots, m_{k-1}] & \mbox{if $k$ is even.}
\end{cases}
\end{align*}
If $r=1/p$ ($p>1$), then
$I_1$
is degenerate to the singleton
$\{0\}$.
And if $r=(p-1)/p$ ($p>1$), then
$I_2$ is degenerate to the singleton
$\{1\}$.
Otherwise, $I_1$ and $I_2$ are
non-degenerate intervals,
and the union $I_1\cup I_2$ forms
a fundamental domain of the action of $\RGPP{r}$
on the domain of discontinuity of $\RGPP{r}$,
the complement in $\RRR$ of the closure
of $\RGPP{r}\{\infty, r\}$.
(In the exceptional case $r=1/p$ (resp. $(p-1)/p$),
the rational number
$0$ (resp. $1$)
lies in the limit set and
$I_2$ (resp. $I_1$)
is a fundamental domain of the action of
$\RGPP{r}$
on the domain of discontinuity.)
This fact together with Proposition ~\ref{Prop:Knotgroup}
implies the following lemma.

\begin{lemma}
\label{Lemma:FundametalDomain}
Suppose $0<r<1$.
Then for any $s\in\QQQ$,
there is a unique rational number
$s_0\in I_1\cup I_2\cup \{\infty, r\}$
such that $s$ is contained in the  $\RGPP{r}$-orbit of $s_0$,
and in particular, $\alpha_s$ is homotopic to $\alpha_{s_0}$ in
$S^3-K(r)$.
\end{lemma}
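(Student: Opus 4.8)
The plan is to derive the lemma from the fact, recorded just above its statement, that $I_1\cup I_2$ is a fundamental domain for the action of $\RGPP{r}$ on its domain of discontinuity, combined with Proposition~\ref{Prop:Knotgroup}. Write $\Omega$ for the domain of discontinuity, namely the complement in $\RRR$ of the limit set $\Lambda=\closure{\RGPP{r}\{\infty,r\}}$. Since $0<r<1$ forces $r\in\QQ-\ZZ$, the group $\RGPP{r}=\RGP{\infty}*\RGP{r}$ is a genuine free product, and by Poincar\'e's theorem it acts on $\HH^2$ as a reflection group whose strict fundamental domain is the ideal polygon $\closure{R}$: its four geodesic sides lie on the mirrors (the Farey edges from $\infty$ to $0$ and $1$, and from $r$ to $r_1$ and $r_2$), while the trace $\closure{R}\cap\RRR$ on the circle at infinity is exactly $I_1\cup I_2$, with ideal vertices $\infty,0,r_1,r,r_2,1$. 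Note that only $\infty$ and $r$ are corners where two mirrors meet; the points $0,r_1,r_2,1$ are where a single mirror meets a boundary arc.

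First I would settle existence of $s_0$ via the dichotomy $\RRR=\Omega\sqcup\Lambda$. If $s\in\Omega$, then because $I_1\cup I_2$ is a fundamental domain for the action on $\Omega$, there is some $g\in\RGPP{r}$ with $g^{-1}s\in I_1\cup I_2$; as $\RGPP{r}\le\operatorname{Aut}(\DD)$ preserves the set $\QQQ$ of Farey vertices, the point $s_0:=g^{-1}s$ is rational and lies in $I_1\cup I_2$. If instead $s\in\Lambda$, I claim that $s$ actually lies in the orbit $\RGPP{r}\{\infty,r\}$ itself, so that $s=g\,v$ with $v\in\{\infty,r\}$, and I set $s_0:=v$. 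In either case $s$ belongs to the $\RGPP{r}$-orbit of a rational point $s_0\in I_1\cup I_2\cup\{\infty,r\}$.

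Next I would prove uniqueness. For $s\in\Omega$ this is the strictness of the reflection-group fundamental domain: interior points of $\closure{R}$ have trivial stabilizer, interior points of a side are fixed only by the reflection in that side, and each ideal vertex is fixed exactly by the corresponding dihedral vertex subgroup, so every orbit in $\Omega$ meets $I_1\cup I_2$ in a single point. For $s\in\Lambda$ we have $s_0\in\{\infty,r\}$, and $\infty$ and $r$ lie in distinct orbits: a relation $g\infty=r$ would conjugate $\operatorname{Stab}(\infty)=\RGP{\infty}$ onto $\operatorname{Stab}(r)=\RGP{r}$, which is impossible since the two factors of a free product are never conjugate. As $\infty,r\notin\Omega$, the two cases cannot overlap, whence $s_0$ is unique in $I_1\cup I_2\cup\{\infty,r\}$. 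The degenerate cases $r=1/p$ and $r=(p-1)/p$, where $0$ (resp.\ $1$) falls into $\Lambda$ and $I_1$ (resp.\ $I_2$) collapses to a point, are covered by the parenthetical refinements stated before the lemma. Finally, the ``in particular'' clause is immediate from Proposition~\ref{Prop:Knotgroup}, since $s$ and $s_0$ share an $\RGPP{r}$-orbit and hence $\alpha_s$ is homotopic to $\alpha_{s_0}$ in $S^3-K(r)$.

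The step I expect to be the main obstacle is the claim used in the second paragraph, that every \emph{rational} point of the limit set $\Lambda$ actually lies in the orbit $\RGPP{r}\{\infty,r\}$, rather than being a stray accumulation point of that orbit. Establishing this requires the precise identification of $r_1$ and $r_2$ as the Farey neighbours of $r$ bounding $R$, together with a careful analysis of how the tiles $g\closure{R}$ accumulate along $\RRR$, so as to distinguish the two genuine cusps $\infty,r$ (the mirror-corners, shared by infinitely many tiles) from the regular interval endpoints $0,r_1,r_2,1$, which lie in $\Omega$.
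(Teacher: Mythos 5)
There is a genuine gap, and you have correctly located it yourself: the claim that every rational point of the limit set $\Lambda$ lies in the orbit $\RGPP{r}\{\infty,r\}$ is left unproven, and in fact it is \emph{false} in the degenerate cases. When $r=1/p$ the point $0$ is an ideal vertex of $\closure{R}$ at which two mirrors meet (the Farey edges $(\infty,0)$ and $(1/p,0)$), so $0$ is a parabolic fixed point lying in $\Lambda$; but $0$ is a cusp of the quotient orbifold inequivalent to $\infty$ and $r$, so $0\notin\RGPP{r}\{\infty,r\}$ (indeed the paper later shows $\alpha_0$ is \emph{not} null-homotopic in $S^3-K(r)$, whereas $\alpha_s$ is null-homotopic for every $s$ in the orbit of $\{\infty,r\}$). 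For any $s$ in the $\RGPP{r}$-orbit of $0$ your dichotomy puts $s$ in the limit-set case and your recipe returns $s_0\in\{\infty,r\}$, when the correct answer is $s_0=0\in I_1$. A parenthetical appeal to the degenerate cases does not repair this, because the error sits inside your main case division. Even in the non-degenerate case, the assertion that $\Lambda\cap\QQQ=\RGPP{r}\{\infty,r\}$ (equivalently, that no rational number is a conical limit point of $\RGPP{r}$) is precisely the hard content, and it is asserted rather than proved.

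The paper sidesteps all of this with a single uniform argument that you should adopt: pick $z$ in the interior of a Farey triangle inside $R$ and consider the geodesic $\ell$ from $z$ to $s$. Because $s$ is a vertex of $\DD$ (its continued fraction is finite), $\ell$ crosses only finitely many Farey edges, hence only finitely many $\RGPP{r}$-translates of the four boundary edges of $R$; folding back across these finitely many walls produces $\gamma\in\RGPP{r}$ with $\gamma(s)\in\closure{R}\cap\RRR=I_1\cup I_2\cup\{\infty,r\}$. This gives existence for every rational $s$ with no split into $\Omega$ versus $\Lambda$, and it simultaneously supplies the missing fact in your approach (a rational cannot be a conical limit point, so $\Lambda\cap\QQQ$ is exactly the orbit of the ideal vertices of $R$ lying in $\Lambda$). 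Your uniqueness discussion is essentially sound and matches the paper's appeal to the quotient $(\HH^2\cup\Omega(\RGPP{r}))/\RGPP{r}$, though in the degenerate cases you must also record that the cusp $0$ (resp.\ $1$) is inequivalent to $\infty$ and $r$, not merely that $\infty$ and $r$ are inequivalent to each other.
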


\begin{proof}
Let $s$ be an element of $\QQQ$.
Pick a point, $z$, in the interior of
a Farey triangle contained in the fundamental domain $R$,
and consider the geodesic, $\ell$, in $\HH^2$ joining $z$ with $s$.
Then $\ell$ intersects only finitely many Farey edges,
and hence it intersects only finitely many $\RGPP{r}$-images
of the four boundary edges of $R$.
This enables us to find an element $\gamma\in \RGPP{r}$
such that
$\gamma(s)\in \overline{R}\cap \RRR =I_1\cup I_2\cup\{\infty,r\}$.
Thus $s$ is contained in the  $\RGPP{r}$-orbit of
$s_0:=\gamma(s)\in I_1\cup I_2\cup\{\infty,r\}$.
The uniqueness of such an element $s_0$
can be seen by looking at
the quotient space of
$\HH^2\cup \Omega(\RGPP{r})$ by $\RGPP{r}$,
where $\Omega(\RGPP{r})$ is the domain of discontinuity
of the action of $\RGPP{r}$ on $\partial \HH^2$.
Finally, Proposition ~\ref{Prop:Knotgroup}
implies that
$\alpha_s$ is homotopic to $\alpha_{s_0}$ in $S^3-K(r)$.
\end{proof}

Thus the only if part of Main Theorem ~\ref{main_theorem}
is equivalent to the following theorem,
except for the the trivial knot $K(0)$ and
the trivial $2$-component link $K(\infty)$.

\begin{theorem}
\label{if_part_theorem}
Suppose $0<r<1$.
Then,
for any $s\in I_1\cup I_2$,
$\alpha_s$ is not null-homotopic in $S^3-K(r)$.
\end{theorem}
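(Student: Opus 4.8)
The plan is to prove the contrapositive by means of Corollary \ref{existence_condition}. Assuming that $\alpha_s$ is null-homotopic in $S^3-K(r)$ for some $s\in I_1\cup I_2$, we will derive a contradiction by showing that the cyclic sequence $CS(s)$ can contain neither $(S_1,S_2)$ nor $(S_2,S_1)$ as a subsequence. Since each of $S_1,S_2$ is symmetric (Proposition \ref{sequence}), the reverse of $(S_1,S_2)$ is $(S_2,S_1)$; as $CS(s)$ is itself symmetric by Corollary \ref{induction2}, it contains one of the two patterns if and only if it contains the other. This lets me treat $(S_1,S_2)$ and $(S_2,S_1)$ uniformly, and I will concentrate on $(S_1,S_2)$. (The endpoint value $s=0$, to which Corollary \ref{existence_condition} does not directly apply, and the trivial link, are recorded separately.)

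The first and cleanest step is a reduction of the leading continued-fraction coefficient. Write $s=[n_1,n_2,\dots]$ and put $m=m_1$. When $k\ge 2$, Proposition \ref{sequence} shows that $(S_1,S_2)$ contains the value $m+1$ (the ends of the nonempty $S_1$) and the value $m$ (the ends of $S_2$), while Proposition \ref{properties} shows that every term of $CS(s)$ equals $n_1$ or $n_1+1$. Hence $\{m,m+1\}\subseteq\{n_1,n_1+1\}$, forcing $n_1=m$; that is, $s$ has the same leading coefficient as $r$. Thus the pattern is immediately excluded for the part of $I_1$ with $s\le 1/(m+1)$ and the part of $I_2$ with $s>1/m$, and only the subintervals on which $s=[m,\dots]$ remain. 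The case $k=1$ is the base of the argument and is settled by exactly this analysis: there $r=1/m$, $(S_1,S_2)=(m)$, $I_1$ degenerates to a point of the limit set, and every $s\in I_2=[1/(m-1),1]$ other than the endpoint $1/(m-1)$ (whose sequence is $(m-1,m-1)$) has leading coefficient at most $m-1$; a term equal to $m$ would require $s=[m-1,\dots]$ with at least two partial quotients (Proposition \ref{properties}), hence $s<1/(m-1)$, contrary to $s\in I_2$. So $m$ never occurs in $CS(s)$.

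For $k\ge 2$ and the remaining range $s=[m,\dots]$ I would descend one level in the continued fraction. Subtracting $m$ from every term, Lemma \ref{shifting S-formula} identifies $S(s)$ with $S([0,n_2,\dots])$ and $S(r)$ with $S([0,m_2,\dots,m_k])$, so that the containment of $(S_1,S_2)$ in $CS(s)$ becomes the containment of the corresponding half of the $\{0,1\}$-valued sequence $S([0,m_2,\dots,m_k])$ in $CS([0,n_2,\dots])$. Reading off the maximal runs via the $T$-sequences and invoking Proposition \ref{induction1} (in the two cases $m_2=1$ and $m_2\ge 2$, with $\overleftarrow{S}$ accounting for the reversal) replaces these $\{0,1\}$-sequences by the $S$-sequences of the strictly shorter slope $r'=[m_3,\dots,m_k]$ or $[m_2-1,m_3,\dots,m_k]$. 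The purpose of this bookkeeping is to convert the hypothesis ``$CS(s)$ contains the $r$-pattern'' into ``$CS(s')$ contains the $r'$-pattern'' for the peeled slope $s'$, after which the inductive hypothesis, namely Theorem \ref{if_part_theorem} for $r'$, yields the contradiction, provided $s'$ again lies in the fundamental domain $I_1'\cup I_2'$ of $\RGPP{r'}$.

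The main obstacle is precisely this last proviso: one must verify that peeling off the leading coefficient carries the fundamental-domain condition $s\in I_1\cup I_2$ for $\RGPP{r}$ to the condition $s'\in I_1'\cup I_2'$ for $\RGPP{r'}$, compatibly with the two case-splits ($m_2=1$ versus $m_2\ge 2$) and with the parity of $k$, which interchanges the roles of $r_1$ and $r_2$. I expect this to require a careful comparison, directly in terms of continued fractions, of $s$ with the explicit endpoints $r_1$ and $r_2$ of this section: the inequalities $s\le r_1$ and $s\ge r_2$ must be shown to pass, after peeling, to the analogous inequalities defining $I_1'\cup I_2'$, the only alternative being that $s$ is already $\RGPP{r}$-equivalent to $r$ or $\infty$, which is excluded because $s$ lies in the fundamental domain. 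The degenerate endpoint cases $r=1/p$ and $r=(p-1)/p$, where one of $I_1,I_2$ collapses to a point of the limit set, are recorded separately and are dispatched by the same leading-coefficient computation as the base case $k=1$.
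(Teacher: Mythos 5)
Your overall strategy---contradiction via Corollary~\ref{existence_condition}, followed by an induction on the length of the continued fraction that peels off the leading coefficient using Lemma~\ref{shifting S-formula} and Proposition~\ref{induction1}---is the same combinatorial descent the paper uses, and your treatment of the leading coefficient and of the base case $k=1$ is correct. But there are two genuine gaps. First, your inductive hypothesis is the wrong statement: you invoke ``Theorem~\ref{if_part_theorem} for $r'$'' to contradict ``$CS(s')$ contains the $r'$-pattern,'' but Theorem~\ref{if_part_theorem} for $r'$ only asserts that $\alpha_{s'}$ is not null-homotopic in $S^3-K(r')$, and Corollary~\ref{existence_condition} is a one-way implication (null-homotopic $\Rightarrow$ pattern containment), so no contradiction follows from it. The induction must be run on a purely combinatorial statement. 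The paper does exactly this: its Lemma~\ref{connection} is the inductive claim that pattern containment forces $l_i=m_i$ for $i\le k-1$ and either $l_k\ge m_k$ or ($l_k=m_k-1$ and $t>k$); this is a statement about continued-fraction coefficients with no reference to the fundamental domain, so it passes cleanly through the descent.

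Second, the step you yourself flag as ``the main obstacle''---that $s\in I_1\cup I_2$ descends to $s'\in I_1'\cup I_2'$ for the peeled slopes---is never carried out, and it is precisely the content your induction needs to close. The paper sidesteps this entirely: because Lemma~\ref{connection} concludes coefficient-by-coefficient agreement rather than interval membership, the translation to the intervals is performed only once, at the very end (Remark~\ref{remark:connection}: the conclusion of Lemma~\ref{connection} holds if and only if $s\in(r_1,r_2)=(0,1]-(I_1\cup I_2)$). Restructuring your induction in that form makes the obstacle disappear. Finally, you set aside the case $s=0$ but never return to it; the paper disposes of it by applying Theorem~\ref{half} directly ($u_0=ab$ has length $2$, while the forced subword has length greater than $p\ge 2$), and a complete write-up needs that argument.
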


The following lemma plays an important role in the proof of
Theorem ~\ref{if_part_theorem}.

\begin{lemma} \label{connection}
Suppose $0<r<1$,
and let $S(r)= (S_1, S_2, S_1, S_2)$
be as in Proposition ~\ref{sequence}.
Suppose that a rational number
$s\in (0,1)$
has a continued fraction expansion
$s=[l_1, \dots, l_t]$, where $t \ge 1$,
$(l_1, \dots, l_t) \in (\mathbb{Z}_+)^t$, and $l_t \ge 2$ unless $t=1$.
If the cyclic $S$-sequence $CS(s)$ contains
$(S_1, S_2)$ or $(S_2, S_1)$ as a subsequence,
then the following hold.

\begin{enumerate} [\indent \rm (1)]
\item $t \ge k$.

\item $l_i=m_i$ for each $i=1, \dots, k-1$.

\item Either $l_k \ge m_k$ or both $l_k=m_k-1$ and $t>k$.
\end{enumerate}
\end{lemma}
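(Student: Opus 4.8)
The plan is to argue by induction on the length $k$ of the continued fraction expansion $r=[m_1,\dots,m_k]$, running in parallel the continued-fraction reductions of $r$ and of $s$ supplied by Proposition \ref{induction1} and Corollary \ref{induction2}. For the base case $k=1$ we have $r=1/m$ and $(S_1,S_2)=(m)$ is a single term, so the hypothesis merely says that $m$ occurs in $CS(s)$; since the terms of $CS(s)$ lie in $\{l_1,l_1+1\}$ (or equal $l_1$ when $t=1$) by Proposition \ref{properties}, either $l_1=m$, giving $l_k\ge m_k$, or $l_1=m-1$, in which case the occurrence of $m=l_1+1$ forces $t\ge 2>1$. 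This is exactly conclusion (3), while (1) and (2) are vacuous.

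For the inductive step $k\ge 2$ I first extract the coarse data. Because $S_1$ starts with $m+1$ and $S_2$ starts with $m$ (Proposition \ref{sequence}), the window $(S_1,S_2)$ contains both values $m$ and $m+1$; as $CS(s)$ takes at most the two values $\{l_1,l_1+1\}$, this forces $t\ge 2$ and $l_1=m=m_1$, establishing the $i=1$ case of (2). Next I must match the regimes of $m_2$ and $l_2$, i.e. decide whether the $m$'s or the $(m+1)$'s are isolated. Using Proposition \ref{properties} one checks that $(S_1,S_2)$ begins with a block of at least two consecutive $m+1$'s precisely when $m_2=1$ (the first run length equals $m_3$ or $m_3+1\ge 2$), so in that case $CS(s)$ contains $(m+1,m+1)$ and hence $l_2=1$; symmetrically, when $m_2\ge 2$ the window contains $(m,m)$, forcing $l_2\ge 2$. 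The only exception is the family $r=[m_1,2]$, where $(S_1,S_2)=(m+1,m)$; I dispose of it directly, since there $l_2=1,t=2$ is excluded by the normalization $l_t\ge 2$, so $l_2\ge 2$ or $t>2$, which already yields (3).

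With the regimes matched I transfer the containment one level down. By Corollary \ref{induction2} the cyclic sequence $CS(s')=CT(s)$ records the successive run lengths of $CS(s)$, and likewise $(S_1',S_2')=(T_1,T_2)$ records the run lengths inside $(S_1,S_2)$, where $S(r')=(T_1,T_2,T_1,T_2)$ and $r'$ is as in Proposition \ref{induction1}. Thus an occurrence of $(S_1,S_2)$ inside $CS(s)$—or, via the symmetry of $CS(s)$ from Corollary \ref{induction2} together with the symmetry of each $S_i$, an occurrence of $(S_2,S_1)$—produces an occurrence of $(S_1',S_2')$ (resp. $(S_2',S_1')$) inside $CS(s')$. \emph{The main obstacle lies exactly here}: the two boundary runs of the window need not coincide with genuine maximal runs of $CS(s)$. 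I intend to kill the ambiguity at the \emph{leading} end by choosing the occurrence to begin at a run boundary, which is possible because one of $(S_1,S_2)$, $(S_2,S_1)$ starts with the isolated value (an $m$ when $m_2=1$, an $m+1$ when $m_2\ge 2$) and is therefore automatically run-aligned. The only surviving slack is then a possible one-step extension of the \emph{trailing} run, and I will show that this is precisely the slack permitted by conclusion (3), carried unchanged through the recursion down to the base case.

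Finally I close the induction by pushing the conclusions for $(r',s')$ back up. The reduction sends $s$ to $s'=[l_3,\dots,l_t]$ when $m_2=l_2=1$ and to $s'=[l_2-1,l_3,\dots,l_t]$ when $m_2,l_2\ge 2$; in either case the reduction of $s$ mirrors that of $r$, so that $t'-k'=t-k$ (the expansions shorten by two in the first case and by one in the second). The inductive conclusion $t'\ge k'$ then gives $t\ge k$, and $l'_i=m'_i$ for $i\le k'-1$, together with $l_1=m_1$, gives $l_i=m_i$ for all $i\le k-1$, proving (1) and (2). The last inductive alternative, $l'_{k'}\ge m'_{k'}$ or ($l'_{k'}=m'_{k'}-1$ and $t'>k'$), is exactly alternative (3) for $l_k$ versus $m_k$, since $m'_{k'}=m_k$ and $l'_{k'}=l_k$; the slack $l_k=m_k-1$ again forces $t>k$ through the trailing-run extension and the normalization $l_t\ge 2$. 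Throughout, the $m_2\ge 2$ branch carries the order reversal $T(r)=\overleftarrow{S}(r')$ of Proposition \ref{induction1}, which is harmless since it is applied consistently to both $r$ and $s$ and is absorbed by interchanging the two windows $(S_1',S_2')$ and $(S_2',S_1')$.
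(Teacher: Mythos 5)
Your proposal follows the same route as the paper's own proof: induction on $k$ using the reductions of Proposition~\ref{induction1} and Corollary~\ref{induction2}, with the same base case $k=1$, the same extraction of $t\ge 2$ and $l_1=m_1$ from the fact that the window $(S_1,S_2)$ contains both values $m_1$ and $m_1+1$, the same case split on $m_2$ (your exceptional family $r=[m_1,2]$ is exactly the paper's Case~2), and the same descent to $(r',s')$ followed by pushing conclusions (1)--(3) back up the recursion. All of these steps are correct, modulo one small slip: when $k=2$ and $m_2\ge 3$ the reduction decrements the \emph{last} partial quotient, so $m'_{k'}=m_k-1$ rather than $m_k$; this is harmless because $l'_{k'}=l_k-1$ is shifted by the same amount.

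The substantive issue is the transfer step, and here you have correctly identified a point that the paper's own proof passes over in silence: an occurrence of $(S_1,S_2)$ or $(S_2,S_1)$ in $CS(s)$ does \emph{not} automatically descend to an occurrence of $(T_1,T_2)$ or $(T_2,T_1)$ in $CS(s')=CT(s)$, because the boundary runs of the embedded window need not be maximal runs of $CS(s)$. This is a genuine phenomenon, not a hypothetical one: for $r=3/5=[1,1,2]$ and $s=4/7=[1,1,3]$ one has $CS(s)=\lp 2,2,2,1,2,2,2,1\rp$, which contains $(S_1,S_2)=(2,2,1)$, while $CS(s')=\lp 3,3\rp$ does not contain $(T_1,T_2)=(2)$ as a subsequence. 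However, your proposed repair is not carried out and is inaccurate where it is stated. Aligning the leading end at a run boundary (using the reversal symmetry of $CS(s)$ to pass between $(S_1,S_2)$ and $(S_2,S_1)$) is fine, but the surviving trailing slack is not ``a possible one-step extension'': the trailing run of the window may sit inside an arbitrarily longer run of $CS(s)$, so what descends to $CS(s')$ is a block agreeing with $(T_2,T_1)$ except that its final entry may be strictly larger. To close the induction one must therefore weaken the inductive hypothesis to allow this enlarged boundary entry and re-verify that the weakened hypothesis still yields (1)--(3) at the base case and propagates through Cases~1 and~3; your assertion that the slack ``is precisely the slack permitted by conclusion (3), carried unchanged through the recursion'' is exactly the statement that needs proof, and it is left as a promissory note. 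So as written your argument has a gap at this point --- the same point at which the published proof is also loose.
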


\begin{proof} From Proposition ~\ref{sequence}, keep in mind that
$CS(s)$ consists of $l_1$ and $l_1+1$ (here $l_1+1$ appears only if $t \ge 2$).
The proof proceeds by induction on $k \ge 1$.

If $k=1$, that is, $r=[m_1]$, then
$S_1=\emptyset$ and $S_2=(m_1)$.
So, if $CS(s)$ contains $(S_1,S_2)=(S_2,S_1)=(m_1)$
as a subsequence,
then either $l_1 \ge m_1$ or both $l_1=m_1-1$ and $t \ge 2$,
proving the base step.

Now let $k \ge 2$.
Suppose that $CS(s)$ contains
$(S_1, S_2)$ or $(S_2, S_1)$ as a subsequence.
By Proposition ~\ref{properties}, this yields
that $CS(s)$ consists of $m_1$ and $m_1+1$.
This happens only when $t \ge 2$ and $l_1=m_1$.
We consider three cases separately.

\medskip
\noindent {\bf Case 1.} {\it $m_2=1$.}
\medskip

In this case, $k \ge 3$ and,
by Propositions ~\ref{properties} and \ref{induction1},
$(m_1+1, m_1+1)$ appears in both $(S_1, S_2)$ and $(S_2, S_1)$ as a subsequence, so in $CS(s)$ as a subsequence.
Again by Proposition ~\ref{properties}, $l_2=1$ and so $t \ge 3$. Define
\[
r':=[m_3, \dots, m_k] \quad \text{\rm and} \quad s':=[l_3, \dots, l_t].
\]
Let $S(r')= (T_1, T_2, T_1, T_2)$ be the decomposition of $S(r')$
given by Proposition ~\ref{properties},
as in the proof of Proposition ~\ref{sequence}.
By Corollary ~\ref{induction2},
\[
CT(r)=CS(r') \quad \text{\rm and} \quad CT(s)=CS(s'),
\]
so it follows that
$(T_1, T_2)$ or $(T_2, T_1)$ appears in $CS(s')$ as a subsequence,
because
$(S_1, S_2)$ or $(S_2, S_1)$ appears in $CS(s)$ as a subsequence,
by assumption.
Thus the induction completes the case.

\medskip
\noindent {\bf Case 2.} {\it Both $m_2=2$ and $k=2$.}
\medskip

In this case, the assertion always holds, because if $l_2=1$ then we must have $t \ge 3$, otherwise $l_2 \ge 2=m_2$.

\medskip
\noindent {\bf Case 3.} {\it Either $m_2 \ge 3$ or both $m_2=2$ and $k \ge 3$.}
\medskip

In this case, by Proposition ~\ref{properties},
$(m_1, m_1)$ appears in both $(S_1, S_2)$ and $(S_2, S_1)$ as a subsequence,
so in $CS(s)$ as a subsequence. Again by Proposition ~\ref{properties}, $l_2 \ge 2$.
Define
\[
r':=[m_2-1, m_3, \dots, m_k] \quad \text{\rm and} \quad s':=[l_2-1, l_3, \dots, l_t].
\]
Let $S(r')= (T_1, T_2, T_1, T_2)$ be the decomposition of $S(r')$
given by Proposition ~\ref{properties},
as in the proof of Proposition ~\ref{sequence}.
By Corollary ~\ref{induction2},
\[
CT(r)=CS(r') \quad \text{\rm and} \quad CT(s)=CS(s'),
\]
so it follows that
$(T_1, T_2)$ or $(T_2, T_1)$ appears in $CS(s')$ as a subsequence,
because
$(S_1, S_2)$ or $(S_2, S_1)$ appears in $CS(s)$ as a subsequence,
by assumption.
As in Case ~1, the induction completes the case.
\end{proof}

\begin{remark}\rm
\label{remark:connection}
We can easily see that the a rational number $s\in (0,1]$
satisfies the conclusion of Lemma ~\ref{connection}
if and only if $s$ lies in the open interval
$(r_1,r_2)=(0,1]-(I_1\cup I_2)$,
where $r_1$ and $r_2$ are rational numbers
such that $I_1=[0,r_1]$ and $I_2=[r_2,1]$,
introduced in the paragraph preceding Lemma ~\ref{Lemma:FundametalDomain}.
\end{remark}

We are now in a position to prove
Theorem ~\ref{if_part_theorem}, i.e.,
the only if part of Main Theorem ~\ref{main_theorem}.

\begin{proof} [Proof of Theorem ~\ref{if_part_theorem}]
Consider a $2$-bridge link $K(r)$ with $0<r<1$,
and pick a rational number $s$ from $I_1\cup I_2$.
Suppose on the contrary that
$\alpha_s$ is null-homotopic in $S^3-K(r)$, namely
$u_s=1$ in $G(K(r))$.
If $s\in (0,1]$, then
we see by Corollary ~\ref{existence_condition}
that $CS(s)$ contains $(S_1,S_2)$ or $(S_2, S_1)$
as a subsequence.
Hence, we see by Lemma ~\ref{connection}
and Remark ~\ref{remark:connection}
that
$s\in (r_1,r_2)=(0,1]-(I_1\cup I_2)$, a contradiction.
So, the only possibility is $s=0$.
This case can be handled by directly using Theorem ~\ref{half},
which implies
that $u_s$ must contain a subword $w$ of $(u_r^{\pm 1})$
such that the $S$-sequence of $w$ is
$(S_1, S_2, \ell)$ or $(\ell, S_2, S_1)$
for some positive integer $\ell$.
Note that the length of such a subword $w$ is strictly greater than
$p$, half the length of $(u_r^{\pm 1})$,
where $r=q/p$.
Since $0<r<1$, we have $p\ge 2$.
So, the word $u_0=ab$ cannot contain such a subword,
a contradiction.
This completes the proof
of Theorem ~\ref{if_part_theorem}.
\end{proof}

Thus we have proved
Main Theorem ~\ref{main_theorem} except for the case $r=0$ and $r=\infty$.
These exceptional cases are treated as follows.
Suppose $r=\infty$, then $K(\infty)$ is the trivial $2$-component link,
and $G(K(\infty))$ is the free group $F(a,b)$.
On the other hand, for every $s\in \QQ$,
$u_s$ is a non-trivial cyclically reduced word in $\{a,b\}$
and hence it represents a non-trivial element of $G(K(\infty))$.
On the other hand, the $\RGPP{r}$-orbit of
$\{\infty, r\}=\{\infty\}$ is the singleton $\{\infty\}$.
Hence
Main Theorem ~\ref{main_theorem}
holds for this case.
Next, suppose $r=0$.
Then $G(K(0))=\langle a, b \, | \, ab \rangle\cong \ZZ$.
Further, $\RGPP{r}$ is equal to the group
generated by the reflections in the edges of any of $\DD$.
In particular, any Farey triangle
is a fundamental domain for the action of $\RGPP{r}$ on $\HH^2$.
Hence, any $s\in\QQQ$ belongs to the $\RGPP{r}$-orbit of one
and only one of $\{0,1,\infty\}$.
On the other hand,
$u_1=ab^{-1}=a^2 \neq 1$ in $G(K(0))$.
Hence, Main Theorem ~\ref{main_theorem} holds for this case.
This completes the proof of
Main Theorem ~\ref{main_theorem}.

\begin{remark}
{\rm
The assertion
in \cite[Example ~4.2]{Ohtsuki-Riley-Sakuma}
that $\RGPP{1}$ acts transitively on $\QQQ$
is obviously incorrect.
It should be noted that though there is
an upper-meridian-pair preserving epimorphism (actually an isomorphism)
from $G(K(1))=\langle a, b \, | \, ab^{-1} \rangle$ to
$G(K(0))=\langle a, b \, | \, ab \rangle$,
it does not send the pair $(a,b)$ to $(a,b)$.
}
\end{remark}

At the end of this section,
we describe a geometric intuition
behind the proof of
the main theorem.
Note that a slope $s$ belongs to $I_1\cup I_2$
if and only if it does not belong to
$(-\infty, 0)\cup (1,\infty]$ nor $(r_1,r_2)$.
The condition that $s\notin (-\infty, 0)\cup (1,\infty]$,
i.e., $s\in [0,1]$, implies that the word $u_s$ can be read from
a line of slope $r$ in $\RR^2$ \lq\lq effectively''
so that $S(s)=S(u_s)$
(see Remark ~\ref{remark:S(r)=S(u_r)}).
To describe the geometric meaning of the condition $s\notin (r_1,r_2)$,
set $p_i$ and $q_i$ be relatively prime integers such that
$r_i=q_i/p_i$ $(i=1,2)$.
Then $(p,q)=(p_1+p_2,q_1+q_2)$, where $r=q/p$,
and the parallelogram in $\RR^2$
spanned by $(0,0)$, $(p_1,q_1)$, $(p_2,q_2)$ and $(p,q)$
does not contain lattice points in its interior.
If $s\in (r_1,r_2)$, then the ray
(in the first quadrant)
of slope $s$ from the origin
passes through the interior of the parallelogram
and hence the word $u_s$ shares a long common initial subword
with $u_r$.
On the other hand, if $s\notin (r_1,r_2)\cup(-\infty, 0)\cup (1,\infty]$,
then the ray
(in the first quadrant)
of slope $s$ from the origin
is disjoint from the interior of the parallelogram,
and hence, $u_s$ shares only a short initial subword with $u_r$.
This convinces us that the cyclic word $(u_s)$,
for $s\notin (r_1,r_2)\cup(-\infty, 0)\cup (1,\infty]$,
shares only short common subwords with
the cyclic word $(u_r)$.
This is the intuition behind the proof of
the main theorem.

We realized through discussion with Norbert A'Campo
that the decomposition
$S(r)=(S_1, S_2, S_1, S_2)$ in Proposition ~\ref{sequence}
has a natural geometric interpretation in terms of
the above parallelogram.
To describe it, assume $q_1/p_1 < q/p < q_2/p_2$ in the above setting,
and consider the infinite broken line, $B$,
obtained by joining the lattice points
\[
\dots, (0,0), (p_2,q_2), (p,q), (p+p_2, q+q_2), (2p,2q), \dots
\]
which is invariant by the translation $(x,y)\mapsto (x+p,y+q)$.
By slightly modifying $B$ near the lattice points,
we obtain a (topological) line, $B^+$, in $\RR^2-\ZZ^2$,
invariant by the translation,
which is homotopic to the line $L^+(r)$ in the proof of
Lemma ~\ref{u-word}.
Pick a point, $z_0\in B^+$ in the second quadrant,
and consider the sub-path of $B^+$ bounded by
$z_0$ and $z_4:=z_0+(2p,2q)$.
Then the word $u_r$ is also obtained by reading the
intersection of the sub-path with the vertical lattice lines.
Pick a point $z_1\in B^+$ whose $x$-coordinate is
$p_2+\mbox{(small positive number)}$, and set
$z_2:=z_0+(p,q)$ and $z_3:=z_1+(p,q)$.
Let $B^+_i$ be the sub-path of $B^+$
joining $z_{i-1}$ with $z_i$ ($i=1,2,3,4$).
Then we can see that the subword of $u_r$ corresponding to $B^+_i$
is equal to the word $v_i$ in Section ~\ref{small_cancellation},
i.e., $u_r\equiv v_1v_2v_3v_4$,
$S_1=S(v_1)=S(v_3)$ and $S_2=S(v_2)=S(v_4)$.
In particular, $|v_1|=|v_3|=p_2+1$ and $|v_2|=|v_4|=p_1-1$.
We hope to fully describe this on another occasion.

\section{Relation with a question by Minsky}
\label{Further_discussion}

In this section, we describe the relation of
Main Theorem ~\ref{main_theorem} with
the question raised by Minsky in \cite[Question ~5.4]{Gordon}.
Let $M=H_+\cup_S H_-$ be a Heegaard splitting of
a $3$-manifold $M$.
Let $\Gamma_{\pm}:=MCG(H_{\pm})$ be the mapping
class group of $H_{\pm}$, and
let $\Gamma^0_{\pm}$ be the kernel of the map
$MCG(H_{\pm})\to \mathrm{Out}(\pi_1(H_{\pm}))$.
Identify $\Gamma^0_{\pm}$ with a subgroup of $MCG(S)$,
and consider the subgroup $\langle \Gamma^0_+,\Gamma^0_-\rangle$
of $MCG(S)$.
Now let $\Delta_{\pm}$ be the set of (isotopy classes of)
simple loops in $S$ which bound a disk in $H_{\pm}$.
Let $Z$ be the set of
essential
simple loops in $S$
which are null-homotopic in $M$.
Note that $Z$ contains $\Delta_{\pm}$ and invariant
under $\langle \Gamma^0_+,\Gamma^0_-\rangle$.
In particular, the orbit
$\langle \Gamma^0_+,\Gamma^0_-\rangle(\Delta_+\cup\Delta_-)$
is a subset of $Z$.
Then Minsky posed the following question.

\begin{question}
\label{Minsky_question}
When is $Z$ equal to
the orbit
$\langle \Gamma^0_+,\Gamma^0_-\rangle(\Delta_+\cup\Delta_-)$?
\end{question}

The above question makes sense
not only for Heegaard splittings but also
bridge decompositions of knots and links.
Actually, the groups
$\RGP{\infty}$ and $\RGP{r}$
in our setting correspond to the groups
$\Gamma^0_+$ and $\Gamma^0_-$, and hence the group
$\RGPP{r}$ corresponds to
the group
$\langle\Gamma^0_+,\Gamma^0_-\rangle$.
To make this precise, recall the bridge decomposition
$(S^3,K(r))=(B^3,t(\infty))\cup (B^3,t(r))$,
and let $\tilde\Gamma_{+}$ (resp. $\tilde\Gamma_{-}$) be the mapping
class group of the pair $(B^3,t(\infty))$ (resp. $(B^3,t(r))$), and
let $\tilde\Gamma^0_{\pm}$ be the kernel of the natural map
$\tilde\Gamma_{+}\to \mathrm{Out}(\pi_1(B^3-t(\infty)))$
(resp. $\tilde\Gamma_{-}\to \mathrm{Out}(\pi_1(B^3-t(r)))$).
Identify $\tilde\Gamma^0_{\pm}$ with a subgroup of
the mapping class group $MCG(\PConway)$ of the
$4$-times punctured sphere $\PConway$.
Recall that the Farey tessellation $\DD$ is identified with the curve complex of $\PConway$ and there is a natural map
from $MCG(\PConway)$ to the automorphism group $Aut(\DD)$
of $\DD$,
whose kernel is equal to the image of the
$(\ZZ/2\ZZ)^2$-action on $\PConway$, which appeared
in the proof of Lemma ~\ref{some_automorphisms}.
Then the group $\Gamma_{\infty}$ (resp. $\Gamma_r$)
introduced in Section ~\ref{statements}
is identified with the image of
$\tilde\Gamma^0_{+}$ (resp. $\tilde\Gamma^0_{-}$)
by this natural map.
Moreover, the sets $\{\alpha_{\infty}\}$
and $\{\alpha_{r}\}$ correspond to
the sets $\Delta_+$ and $\Delta_-$,
and Main Theorem ~\ref{main_theorem} says that
the set $Z$ of simple loops in $\PConway$
which are null-homotopic in $S^3-K(r)$
is equal to the orbit
$\langle \Gamma_{\infty}, \Gamma_r\rangle(\Delta_+\cup\Delta_-)$.
Thus Main Theorem ~\ref{main_theorem}
may
be regarded as an answer to
the special variation of Question ~\ref{Minsky_question}.

Finally, we note that Main Theorem ~\ref{main_theorem}
is also related to the existence of a possible variation
of McShane's identity for $2$-bridge knots
(see \cite{Sakuma}).
Related topics are studied in
subsequent papers
\cite{lee_sakuma_2, lee_sakuma_3, lee_sakuma_4,
lee_sakuma_5}.
For an overview of this series of works,
please see the research announcement \cite{lee_sakuma_6}.


\bibstyle{plain}
\bigskip

\end{document}